\DeclareMathAlphabet{\mathcalligra}{T1}{calligra}{m}{n}
\theoremstyle{plain} 
\newtheorem{theorem}{\indent\sc Theorem}[section]
\newtheorem{lemma}[theorem]{\indent\sc Lemma}
\newtheorem{corollary}[theorem]{\indent\sc Corollary}
\newtheorem{proposition}[theorem]{\indent\sc Proposition}
\newtheorem{remark}[theorem]{\indent\sc Remark}
\newtheorem{definition}[theorem]{\indent\sc Definition}
\newtheorem{example}[theorem]{\indent\sc Example}
\numberwithin{equation}{section}
\newcommand{\Der}{\mathop{\mathrm{Der}}}
\newcommand{\DS}[1]{\mathop{\mathrm{DS}({#1})}}
\newcommand{\EP}[1]{\mathop{\mathrm{exp}({#1})}}
\newcommand{\PO}[1]{\mathop{\mathrm{POexp}({#1})}}
\newcommand{\Sym}{\mathop{Sym}}
\newcommand{\K}{\mathbb{K}}
\newcommand{\A}{\mathcal{A}}
\newcommand{\B}{\mathcal{B}}
\newcommand{\Z}{\mathbb{Z}}
\newcommand{\C}{\mathcal{C}}
\crefname{enumi}{Case}{Cases}
\title{Free resolution of the logarithmic derivation modules 
of close to
free arrangements}
\author{Junyan CHU\thanks{Graduate School of Mathematics, Kyushu University, Fukuoka, 819-0395, Japan. \\ 
 Email:chu.junyan.949@s.kyushu-u.ac.jp. \\
2020 Mathematics Subject Classification. 52C35, 14N20. \\
The author has been supported by the China Scholarship Council.
}}
\begin{document}

\maketitle

\begin{abstract}

This paper studies the algebraic structure of a new class of hyperplane arrangement $\A$ obtained by deleting two hyperplanes from a free arrangement.
We provide information on the minimal free resolutions of 
the logarithmic derivation module of $\A$,
 which can be used to compute a lower bound for the graded Betti numbers of the resolution.

Specifically, for the three-dimensional case, 
we determine 
the minimal free resolution of 
the logarithmic derivation module of $\A$.
We present illustrative examples of our main theorems to provide insights into the relationship between algebraic and combinatorial properties for close-to-free arrangements.

\end{abstract}

\section{Introduction}

Let $V$ be the $\ell$-dimensional vector space ${\K}^{\ell}$ over a field $\K$. The coordinate ring $S = \Sym(V^*) \cong \K [x_1, \ldots, x_{\ell}]$ is equipped with the usual grading and its degree $i$ homogeneous part is denoted by $S_i$. 
A (central) \emph{hyperplane arrangement} $\A$ is a finite set of linear hyperplanes in $V$. 
For a hyperplane $H\in \A$, the defining  linear form is denoted by $\alpha_H\in S_1$ with $H=\ker{\alpha_H}$. 
The \emph{defining polynomial} $Q(\A)$ of $\A$ is defined as $Q(\A) = \prod_{H \in \A} \alpha_H$, and it is defined up to a scalar multiple.
One of the most important algebraic invariants
associated to an arrangement $\A$ is its \emph{logarithmic derivation module} $D(\A)$ defined by
\begin{align*}
  D(\A)=\{\theta \in \Der S \mid  \theta(\alpha_H)\in S\alpha_H\  \text{for any} \  H=\ker \alpha_H\in \A \}, 
\end{align*}
where $\Der{S}$ is the free $S$-module 
of derivations generated by $\{\partial_{x_i}\mid 1\le i\le \ell\}$.
Given a derivation $\theta=\sum_{i=1}^{\ell}f_i\partial_{x_i}\in D(\A)$, we say it is homogeneous if  all $f_i\in S_d$ for some $d\in {\Z}_{\geq 0}$, and we write $\deg \theta=d$. 
Generally, $D(\A)$ is a reflexive graded $S$-module \cite{ref_17_in_spog} and is not always free. 
If $D(\A)$ is free, then there exist homogeneous derivations $\theta_1, \ldots, \theta_{\ell} \in D(\A)$ such that $D(\A) = \bigoplus_{i=1}^{\ell} S\theta_i$. In this situation, we say that $\A$ is free with \emph{exponents} $\EP{\A} = (d_1, \ldots, d_{\ell})$, where $d_i = \deg \theta_i$.

The study of $D(\A)$ has focused primarily on the case when it is free (see \cite{yoshinaga2014} for a survey). Much remains unexplored when it is not free.
In order to understand non-free cases, a natural approach is to look at their graded minimal free resolution.
Some works consider the degrees (the Betti numbers) of the graded minimal free resolution 
\cite{Derksen2004,graded_betti2010,saito2019degeneration}.
In particular, \cite{11_in_degree_seq,dgree_seq} study the \emph{derivation degree sequence}, denoted as $\DS{\A}$, which is defined as the unordered sequence of the degrees of the minimal homogeneous generators of $D(\A)$.
Moreover, we use $|\DS{\A}|$ to represent the number of the minimal homogeneous generators of $D(\A)$. 
In cases where $\A$ is free, we have $\DS{\A} = \EP{\A}$.
Although minimal homogeneous generators are not unique, 
their degrees do not depend on the choice since they are the degrees of $\mathrm{Tor}_0(\K, D(\A))$. 
One of the main challenges in the study of non-free arrangements is that the determination of these two algebraic properties of  $D(\A)$ 
is generally influenced not only by the combinatorics of  $\A$ but also by its geometry (see \cref{ex:DS_geom}). 

To tackle this issue, our initial approach involves examining arrangements that are close to free arrangements.
This is inspired by the \emph{next-to-free minus (NT-free-1)} defined by Abe.
\begin{definition}[Definition 1.3 and 6.1 in \cite{spog}]
    We say that $\mathcal{B}$ is next-to-free minus (NT-free-1) if there exist a free arrangement $\mathcal{A}$ and a hyperplane $H \in \mathcal{A}$ such that $\mathcal{B} = \mathcal{A} \setminus {\{H\}}$.

    In this case, we say $\A$ is a \textbf{free addition} of $\B$.
\end{definition}

Now we introduce a class of arrangement,  which has a nice structure called \emph{strictly plus-one generated (SPOG)}, closely related to the NT-free-1 arrangement.

\begin{definition}[Definition 1.1 in \cite{spog}]\label{spogarr}
An arrangement $\B$ is said to be 
    \emph{strictly plus-one generated (SPOG)} with exponents $\PO{\B}=(d_1,\ldots,d_{\ell})$ and level $d$, if there exist $f_1,\ldots,f_{\ell},\alpha\in S$ with $\alpha\neq 0$ such that $D(\B)$ has a minimal free resolution of the following form:
    \begin{align*}
        0 \rightarrow S[-d-1] \xrightarrow{(\alpha,f_1,\ldots,f_{\ell})} S[-d] \oplus \left(\bigoplus_{i=1}^{\ell} S[-d_i]\right) \rightarrow D(\B) \rightarrow 0.
    \end{align*}
   In particular, $\DS{\B}=(d_1,\ldots,d_{\ell},d)=(\PO{\B},d)$.
\end{definition}
{
\begin{remark}\label{rmk-spog}
 In other words, $\B$ is SPOG if there is a minimal set of homogeneous generators $ \theta_1, \theta_2, \ldots, \theta_{\ell}$ and $\varphi$ for $D(\B)$ such that $\deg \theta_i = d_i$, $\deg \varphi = d$, and
    \[
    \sum_{i=1}^{\ell} f_i \theta_i + \alpha \varphi = 0,
    \]
    where $f_i \in S$ and $0\neq \alpha \in S_1$. 
    This $\alpha$ is called the level coefficient, and $\varphi$ is a level element. 
Moreover, when $d = d_i$ for some $i$ and $f_i\neq 0$, then $\theta_i$ also can be a level element. In other words, the choice of the level coefficient and element is not unique. 
\end{remark}}

Abe \cite{spog} shows that an NT-free-1 arrangement $\B$ is either free or SPOG.
If $\B$ is SPOG and NT-free-1, the level of $\B$ can be determined by the combinatorial properties of its free addition. 
To state his results, let us recall that the definition of the intersection lattice of $\A$, denoted by $L(\A)$, as follows:
\begin{align*}
    L(\A):=\left\{\bigcap_{H\in \B } H\mid  \B \subset\A\right\},
\end{align*}
where $L(\A)$ is equipped with a partial order induced by reverse inclusion. 
For a given $X\in L(\A)$, the localization $\A_X$ of $\A$ at $X$ is defined by
\begin{align*}
    \A_X:=\{H\in \A\mid  X\subset H\},
\end{align*}
and the restriction $\A^X$ of $\A$ onto $X$ is defined by
\begin{align*}
    \A^X:=\{H\cap X\mid  H\in \A\setminus \A_X\}.
\end{align*}

The following is a significant theorem about NT-free-1 arrangements, which plays a crucial role in shaping our results.
  
  \begin{theorem}[Theorem 1.4 and Proposition 5.3 in  \cite{spog}]\label{diml}
Let $\A$ be free with $\EP{\A}=(d_1,\ldots, d_{\ell})$ and  $H\in \A.$ Then $\A^{\prime}=\A\setminus{\{H\}}$ is free, or SPOG with $\PO{\A^{\prime}}=(d_1,\ldots, d_{\ell})$ and level $d=|\A^{\prime}| -|\A^H|$.  Moreover, if $\ell=3$, then $d\geq \max \{d_1,d_2,d_3\}$.
\end{theorem}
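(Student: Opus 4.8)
The plan is to compare $D(\A)$ and $D(\A')$ through the single defining form $\alpha_H$ of the deleted hyperplane. Set $S_H := S/(\alpha_H)$ and write $\bar{g}$ for the image of $g \in S$. Since $\A' \subset \A$ imposes fewer conditions we have an inclusion $D(\A) \subseteq D(\A')$, and for every $\theta \in D(\A')$ one checks that $(\alpha_H\theta)(\alpha_{H'}) \in (\alpha_{H'})$ for all $H' \in \A$, so $\alpha_H D(\A') \subseteq D(\A)$. Consequently the quotient $D(\A')/D(\A)$ is annihilated by $\alpha_H$ and is an $S_H$-module. First I would make this explicit: the degree-preserving map
\[
\rho \colon D(\A') \longrightarrow S_H, \qquad \rho(\theta) = \overline{\theta(\alpha_H)},
\]
has kernel exactly $D(\A)$ by the defining condition at $H$, so its image $I := \rho(D(\A'))$ is a homogeneous ideal of $S_H$ and we obtain the fundamental short exact sequence
\[
0 \longrightarrow D(\A) \longrightarrow D(\A') \xrightarrow{\ \rho\ } I \longrightarrow 0 . \tag{$\ast$}
\]

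The heart of the argument is to show that $I$ is principal, generated by a non-zero-divisor of degree $d = |\A'| - |\A^H|$, i.e.\ $I \cong S_H[-d]$. For $\ell = 3$ this is clean: applying the depth lemma to $(\ast)$, using that $D(\A)$ is free (depth $3$) and $D(\A')$ is reflexive (depth $\ge 2$), gives $\operatorname{depth}_{S_H} I \ge \min(3-1,2) = 2 = \dim S_H$; since $I$ is a nonzero torsion-free module over the two-dimensional regular ring $S_H$, it is maximal Cohen--Macaulay, hence free of rank one. To pin down the degree I would identify the generator, up to a unit, with $\prod_{X \in \A^H} \bar{\alpha}_X^{\,m^H(X)-1}$, where $m^H(X) = |\A_X| - 1$ is the Ziegler multiplicity; since $\sum_{X \in \A^H} m^H(X) = |\A'|$, its degree is $\sum_X (m^H(X) - 1) = |\A'| - |\A^H| = d$. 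Granting $I \cong S_H[-d]$, the resolution $0 \to S[-d-1] \xrightarrow{\alpha_H} S[-d] \to I \to 0$ combines with the (already free) resolution of $D(\A)$ via a mapping-cone argument on $(\ast)$ to produce
\[
0 \longrightarrow S[-d-1] \longrightarrow S[-d] \oplus \Bigl(\bigoplus_{i=1}^{\ell} S[-d_i]\Bigr) \longrightarrow D(\A') \longrightarrow 0 .
\]
If this presentation is minimal it is exactly the SPOG form with $\PO{\A'} = (d_1,\dots,d_\ell)$ and level coefficient $\alpha = \alpha_H$; if instead a cancellation occurs (the syzygy entry in some $S[-d_i]$ is a unit) the cone collapses and $D(\A')$ is free. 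This is the free-or-SPOG dichotomy. For general $\ell$ the depth estimate only yields $\operatorname{depth}_{S_H} I \ge 2 < \dim S_H$, so the substantive point — that the first syzygy of $D(\A')$ is nonetheless principal — is the main obstacle, and is where a finer analysis is needed.

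For the last assertion ($\ell = 3$) I would argue combinatorially. Writing $\EP{\A} = (1,d_2,d_3)$ with $1 \le d_2 \le d_3$, freeness gives $|\A| = 1 + d_2 + d_3$, hence $|\A'| = d_2 + d_3$ and $d = d_2 + d_3 - |\A^H|$; thus $d \ge \max\{d_1,d_2,d_3\} = d_3$ is equivalent to the bound $|\A^H| \le d_2$ on the size of the restriction. By Ziegler's theorem the multirestriction $(\A^H, m^H)$ is free with exponents $(d_2,d_3)$, and by Terao's addition--deletion $\A'$ is free precisely when $|\A^H| \in \{d_2+1, d_3+1\}$; these are the cases excluded in the SPOG regime. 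When the Ziegler multiplicities are unbalanced (a single point $X_0$ carries $m^H(X_0) = d_3$) the remaining points carry total multiplicity $d_2$, so $|\A^H| \le d_2 + 1$ with equality only in the free case, giving $|\A^H| \le d_2$ as required. The main obstacle is the balanced case, where the exponents no longer read off the multiplicities and the positions of the points of $\A^H$ enter; here I expect to need a direct analysis of the image of $\rho$ — equivalently of the generator $\prod_X \bar{\alpha}_X^{\,m^H(X)-1}$ of $I$ near each $X \in \A^H$ — or a divisibility argument for characteristic polynomials, to force $|\A^H| \le d_2$ and conclude $d \ge \max\{d_1,d_2,d_3\}$.
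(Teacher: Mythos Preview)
This theorem is not proved in the paper: it appears in the introduction purely as a quotation of Theorem~1.4 and Proposition~5.3 of \cite{spog}, and is used throughout as a known input without any argument supplied. There is therefore no proof in the paper against which to compare your proposal.

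Two brief remarks on the proposal itself. Your exact sequence $(\ast)$ and mapping-cone strategy are indeed the backbone of the original argument, and the principality of $I$ in arbitrary dimension---which you flag as the main obstacle for the first assertion---is essentially the content of the result this paper cites as Proposition~2.1 (Terao's polynomial $B$ of degree $|\A'|-|\A^H|$ with $\theta(\alpha_H)\in(\alpha_H,B)$ for all $\theta\in D(\A')$), so that gap is closable with the references already at hand. For the $\ell=3$ inequality, your reduction to $|\A^H|\le d_2$ is correct, but the ``unbalanced'' case you dispose of (a single point of $\A^H$ carrying Ziegler multiplicity exactly $d_3$) is a thin slice: the exponents $(d_2,d_3)$ of a $2$-multiarrangement are not in general realised as the multiplicity of any single point, so the residual ``balanced'' case is essentially the whole problem, and the multiplicity bookkeeping you sketch gives no purchase on it. Abe's argument for this bound in \cite{spog} proceeds instead through the structure of the SPOG relation and Saito's criterion rather than through the Ziegler multiplicities.
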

{
\begin{remark}\label{rmk-levelEl}
Suppose that $\A$ is free and $\A' = \A \setminus H$ is SPOG.
   \begin{enumerate}[label=(\arabic*)]
   \item \label{rmk1.3-1}   

Assume that \(\theta_1,\ldots,\theta_\ell\) is a basis of \(D(\A)\). Since \(\DS{\A'}=(\PO{\A},d)\) and \(\PO{\A'}=\EP{\A}\), there exists a \(\varphi \in D(\A')\) of degree \(d\) such that \(\theta_1,\ldots,\theta_\ell,\varphi\) generate \(D(\A')\). Since \(D(\A)\subsetneq D(\A')\), we have \(D(\A') = D(\A) + S\varphi\), and \(\varphi \notin D(\A)\) as a level element of \(D(\A')\).

       \item 

        Let $\varphi \notin D(\A)$ be a level element of $\A'$. Let $\alpha_H \in S_1$ be such that $\ker \alpha_H = H$. Since $\varphi \in D(\A') \setminus D(\A)$, we have $\alpha_H \varphi \in D(\A)$. Thus, we obtain a relationship between the minimal generators of $D(\A')$. Since SPOG arrangements have a unique relation among the minimal generators, we may always assume that $\alpha_H$ is the level coefficient.

   \end{enumerate}
   
\end{remark}}

In this paper, we introduce a new class of possibly non-free arrangements
obtained by deleting two hyperplanes from free arrangements.
\begin{definition}
    We say that $\mathcal{B}$ is \emph{next-to-free-minus-two} (NT-free-2) if there exist a free arrangement $\A$ and two hyperplanes $H_1, H_2 \in \A$ such that $\mathcal{B} = \A \setminus {\{H_1, H_2\}}$.
\end{definition}

By analyzing the minimal free resolution, we deduce the following theorem:
\begin{theorem}\label{proj_dim_Aij}
Let $\B$ be a NT-free-2 arrangement. Then the projective dimension $\text{pd}_S(D(\B))\leq 1$ if and only if  $|\DS{\B}|\leq \ell +2$.
\end{theorem}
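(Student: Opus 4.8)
The plan is to study $D(\B)$ through its relation to the free module $D(\A)$ of the ambient free arrangement. Writing $N := D(\B)/D(\A)$, the inclusion $D(\A)\hookrightarrow D(\B)$ (which is strict, as it omits the level elements produced below) yields a short exact sequence
\[
0 \to D(\A) \to D(\B) \to N \to 0 ,
\]
in which $D(\A)$ is free, so $\mathrm{depth}_S D(\A)=\ell$. Since $\alpha_{H_1}\alpha_{H_2}\,D(\B)\subseteq D(\A)$, the module $N$ is annihilated by $\alpha_{H_1}\alpha_{H_2}$, hence supported on $H_1\cup H_2$, so it is torsion of dimension $\ell-1$. Applying the depth lemma to the sequence gives $\mathrm{depth}_S D(\B)\ge \ell-1 \iff \mathrm{depth}_S N\ge \ell-1$, and by Auslander--Buchsbaum this reads
\[
\mathrm{pd}_S D(\B)\le 1 \iff \mathrm{pd}_S N\le 1 \iff N \text{ is Cohen--Macaulay of codimension } 1 .
\]
This is the first and most robust step: it converts the assertion about $D(\B)$ into one about the codimension-one module $N$.

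Next I would translate the generator count $|\DS{\B}|$ into data about $N$. The long exact sequence of $\mathrm{Tor}_\bullet(\K,-)$ attached to the displayed sequence, together with the freeness of $D(\A)$, gives $\mathrm{Tor}_i(\K,D(\B))\cong \mathrm{Tor}_i(\K,N)$ for $i\ge 2$ and yields $|\DS{\B}| = \ell + \mu(N) - r$, where $\mu(N)$ is the minimal number of generators of $N$ and $r:=\dim_\K \ker\!\big(\K^{\ell}\to \mathrm{Tor}_0(\K,D(\B))\big)$ counts how many of the basis derivations $\theta_1,\dots,\theta_\ell$ of $D(\A)$ fail to be minimal generators of $D(\B)$. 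Using degree considerations based on \cref{diml}, I would verify that no constant-coefficient combination of the $\theta_i$ lies in $\mathfrak m\,D(\B)$, so that $r=0$ and $|\DS{\B}|=\ell+\mu(N)$. The theorem thereby reduces to the purely $N$-theoretic equivalence $\mathrm{pd}_S N\le 1 \iff \mu(N)\le 2$.

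To control $\mu(N)$ I would localize away from $X:=H_1\cap H_2$. By \cref{diml} each single deletion $\B_i:=\A\setminus\{H_i\}$ is free or SPOG; in either case one obtains $\varphi_i\in D(\B_i)\subseteq D(\B)$ with $D(\B_i)=D(\A)+S\varphi_i$ and $\alpha_{H_i}\varphi_i\in D(\A)$ (the level element when $\B_i$ is SPOG, cf.\ \cref{rmk-levelEl}), so $\alpha_{H_i}\,\overline{\varphi_i}=0$ in $N$. At a prime $\mathfrak p$ with, say, $\alpha_{H_2}\notin\mathfrak p$, the condition at $H_2$ becomes vacuous, whence $D(\B)_\mathfrak p=D(\B_1)_\mathfrak p$ and $N_\mathfrak p$ is generated by $\overline{\varphi_1}$; symmetrically near $H_2$. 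Consequently $N$ is generated by $\overline{\varphi_1},\overline{\varphi_2}$ away from $X$, and $Q:=N/(S\overline{\varphi_1}+S\overline{\varphi_2})=D(\B)/(D(\B_1)+D(\B_2))$ is supported on the codimension-two flat $X$. Thus $\mu(N)\le 2$ holds exactly when no generator supported on $X$ is needed, i.e.\ when $Q=0$.

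The crux, and the step I expect to be the main obstacle, is the remaining equivalence at $X$. After localizing at the height-two prime $\mathfrak q=I(X)$, the ring $R=S_\mathfrak q$ is a two-dimensional regular local ring whose maximal ideal is generated by the images of $\alpha_{H_1},\alpha_{H_2}$, and $\mathrm{pd}_R N_\mathfrak q\le 1$ is equivalent to $\mathrm{depth}_R N_\mathfrak q\ge 1$, i.e.\ to $N_\mathfrak q$ having no finite-length submodule. The difficulty is that $\mu(N)\le 2$ and $\mathrm{pd}_S N\le 1$ are \emph{not} equivalent for an arbitrary two-generated torsion module, so the geometry of the configuration must be used: one must show that the only first syzygies of $\overline{\varphi_1},\overline{\varphi_2}$ are the expected ones $\alpha_{H_1}\overline{\varphi_1}=0$ and $\alpha_{H_2}\overline{\varphi_2}=0$ precisely when $Q=0$ (yielding a free presentation $0\to S[-\ast]^{2}\to S[-\ast]^{2}\to N\to 0$ and hence $\mathrm{pd}_S N\le 1$), whereas a nonzero $Q$ forces either an extra minimal generator (so $\mu(N)\ge 3$ and $|\DS{\B}|\ge \ell+3$) or a finite-length submodule of $N_\mathfrak q$ (so $\mathrm{depth}_S N\le \ell-2$ and $\mathrm{pd}_S D(\B)\ge 2$). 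Pinning down the syzygies among $\varphi_1,\varphi_2$ through the relations $\alpha_{H_i}\varphi_i\in D(\A)$ and the SPOG relation of each $\B_i$ is where the real work lies; combining this with the two preceding reductions then gives both implications of the theorem.
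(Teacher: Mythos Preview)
Your reductions via the quotient $N=D(\B)/D(\A)$ and the depth lemma are correct and give a clean reformulation, but two points keep this from being a proof.

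First, the claim that $r=0$ is false. In the situation of \cref{min_gen_set}~\ref{useless-2} (equivalently, \cref{free_resolution_Aij}~\ref{useless-case2-2'}) one has $c_1<c_2=\deg\theta_\ell$ and the minimal generating set of $D(\A_{1,2})$ is $\{\theta_1,\dots,\theta_{\ell-1},\varphi_2,\theta_{\ell,1}\}$, so $\theta_\ell\in\mathfrak m\,D(\B)$ and $r=1$. The ``degree considerations based on \cref{diml}'' you invoke only give $d\ge\max_i d_i$ in dimension three, and even there the level can \emph{equal} a top exponent, which is exactly what happens here. Your formula $|\DS{\B}|=\ell+\mu(N)-r$ is fine, but the reduction you draw from it (to $\mu(N)\le2\iff\mathrm{pd}_S N\le1$) no longer follows directly once $r$ can be positive.

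Second, and more importantly, the step you flag as ``where the real work lies'' is the entire content of the theorem, and your outline does not do it. You need both that $\mu(N)\le2$ forces $N$ to be Cohen--Macaulay and that $\mu(N)\ge3$ forces a drop in depth; neither is automatic for a module supported on $H_1\cup H_2$. The paper establishes exactly these two directions, but by a completely different route: rather than studying $N$, it classifies the minimal generating sets of $D(\A_{1,2})$ in \cref{min_gen_set}, then in \cref{free_resolution_Aij} writes down the syzygies explicitly in each case with $|\DS{\A_{1,2}}|\le\ell+2$ (showing they all have length one), and in \cref{free_resolution_>l+2_M0} exhibits, for $|\DS{\A_{1,2}}|>\ell+2$, concrete second syzygies of the form $(f_{1i},\dots,f_{ri},0,\dots,\alpha_2,\dots,0)$ that force $M_2\neq0$. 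To finish your approach you would in effect have to redo this syzygy analysis, so the homological packaging, while elegant, does not shortcut the argument.
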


Let $\A=\{H_1,\ldots,H_p\mid H_i=\ker\alpha_i\}$ be a free arrangement.
We denote the NT-free-1 arrangement  $\A \setminus \{H_j\}$ by $\A_j$
and the NT-free-2 arrangement $\A \setminus \{H_j, H_k\}$ by $\A_{j,k}$.
We note that $\A_{j,k}=\A_{k,j}$.
We also denote $\A^{H_j}$ by $\A^j$.

If $\A_1$ or $\A_2$ is free, 
then $\A_{1,2}$ is NT-free-1 and 
it is either free or SPOG by \cref{diml}.
Therefore, we focus on the case when 
none of $\A_1$ and $\A_2$ are free.
We show that when $|\DS{\A_{1,2}}|\leq \ell + 2$, the minimal free resolution of $D(\mathcal{A}_{1,2})$ assumes one of the forms listed in \cref{free_resolution_Aij}. 
When $|\DS{\A_{1,2}}|> \ell +2$, we obtain a lower bound for the Betti numbers of $D(\A_{1,2})$ using the information provided in \cref{free_resolution_>l+2_M0}. 

Note that since 
the logarithmic derivation module
is a reflexive graded $S$-module, 
its projective dimension is less than or equal to $\ell - 2$
(see, \cref{pdim<l-2}).
Consequently, for a three-dimensional NT-free-2 arrangement $\A_{1,2}$, we can infer that $\text{pd}_S(D(\A_{1,2})) \leq 3 - 2 = 1$. Utilizing \cref{proj_dim_Aij}, we then establish that $|\DS{\A_{1,2}}| \leq \ell + 2=5$.
Furthermore, we determine the precise form of the minimal free resolution of $D(\A_{1,2})$.
\begin{theorem}\label{free_resolution_in_dim3}
Assume $\ell=3$. Let $\A$ be free with $\exp(\A)=(d_1,d_2,d_3)$, and $\A_1$ and $\A_2$ be SPOG with levels $c_1$ and $c_2$, respectively. We may assume $c_1\leq c_2$.{ Then $\A_{1,2}$ can never be free in this set-up, } and
the module $D(\A_{1,2})$  has a minimal free resolution in one of the following forms:
\begin{enumerate}[label=(\arabic*)]
\item Suppose that $|\A_{H_1\cap H_2}|=2$.

There exist $f_i,g_i\in S$ such that
    \begin{align*}
        0 \rightarrow S[-c_1-1] \oplus S[-c_2-1] &\xrightarrow{\begin{array}{c} (\alpha_1,0, f_1,f_2,f_3) \\ (0,\alpha_2, g_1,g_2,g_3) \end{array}}\\ & S[-c_1] \oplus S[-c_2] \oplus \left(\bigoplus_{i=1}^{3} S[-d_i]\right) \rightarrow D(\A_{1,2}) \rightarrow 0,
    \end{align*}
where $i=1,2,3$.
\item Suppose that $|\A_{H_1\cap H_2}|>2$.
\begin{enumerate}[label=(\arabic{enumi}.\arabic*)]

    \item \label{useless-thm1p3} If $c_1=c_2$,
    there exist $f_i\in S$  such that
     \begin{align*} 
        0 \rightarrow S[-c_1-1] \xrightarrow{(\alpha_1\alpha_2,f_1,f_2,f_3)} S[-c_1+1] \oplus \left(\bigoplus_{i=1}^{3} S[-d_i]\right) \rightarrow D(\A_{1,2}) \rightarrow 0.
    \end{align*}
    Moreover, $\A_{1,2}$ is SPOG if and only if $c_1=c_2=\max\{d_1,d_2,d_3\}$. {If $\A_{1,2}$ is SPOG, let $d_2=\max\{d_1,d_2,d_3\}$, then $f_2 \in S_1$.}
        
    \item If $c_1<c_2$, there exist $f_i,g_i,g\in S$ with $g\neq 0$ such that
    \begin{align*}
        0 \rightarrow S[-c_1-1] \oplus S[-c_2]  & \xrightarrow{\begin{array}{c} (\alpha_1,0, f_1,f_2,f_{3}) \\ (g,\alpha_2, g_1,g_2,g_{3}) \end{array}} \\ & S[-c_1] \oplus S[-c_2+1] \oplus \left(\bigoplus_{i=1}^{3} S[-d_i]\right)
        \rightarrow D(\A_{1,2}) \rightarrow 0,
    \end{align*}
    where $i=1,2,3$.

    \end{enumerate}
\end{enumerate}
\end{theorem}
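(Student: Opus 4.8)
The plan is to reduce everything to the torsion quotient $\bar D:=D(\A_{1,2})/D(\A)$. Fix a basis $\theta_1,\theta_2,\theta_3$ of $D(\A)$ with $\deg\theta_i=d_i$, and, using \cref{diml} and \cref{rmk-levelEl}, choose level elements $\varphi_1\in D(\A_1)\setminus D(\A)$ and $\varphi_2\in D(\A_2)\setminus D(\A)$ with $\deg\varphi_j=c_j$ and $\alpha_j\varphi_j\in D(\A)$, where $\alpha_j$ defines $H_j$. Since $D(\A)=\bigoplus_i S[-d_i]$ is free, the short exact sequence $0\to D(\A)\to D(\A_{1,2})\to\bar D\to0$ lets me assemble a free resolution of $D(\A_{1,2})$ from one of $\bar D$ by the horseshoe lemma, with $D(\A)$ contributing in homological degree $0$. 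As $D(\A_{1,2})$ is reflexive of rank $\ell=3$, we have $\mathrm{pd}_S D(\A_{1,2})\le\ell-2=1$, so by \cref{proj_dim_Aij} there are at most $\ell+2=5$ minimal generators and the resolution has length $1$. Because every first syzygy of $\bar D$ will turn out to have degree $\ge c_1+1>\max_i d_i$ (using $c_1\ge\max_i d_i$ from \cref{diml}), the horseshoe comparison map into $D(\A)$ has all entries in the maximal ideal, so the assembled resolution is minimal and the $\theta_i$ remain genuine minimal generators.

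Next I identify $\bar D$ concretely. The map $\rho\colon D(\A_{1,2})\to S/S\alpha_1\oplus S/S\alpha_2$, $\theta\mapsto(\overline{\theta(\alpha_1)},\overline{\theta(\alpha_2)})$, has kernel exactly $D(\A)$, so $\bar D\hookrightarrow S/S\alpha_1\oplus S/S\alpha_2$. Writing $u_j=\varphi_j(\alpha_j)\in S_{c_j}$, the relation $\alpha_j\varphi_j\in D(\A)$ together with $\varphi_j\notin D(\A)$ gives $u_j\notin S\alpha_j$; since $\varphi_1\in D(\A_1)$ kills the condition at $H_2$ we get $\rho(\varphi_1)=(\bar u_1,0)$, and symmetrically $\rho(\varphi_2)=(0,\bar u_2)$, each generating a copy of the domain $S/S\alpha_j$. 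The problem becomes: determine the submodule $\bar D$ generated by these together with any further elements coming from $D(\A_{1,2})$, and compute its single syzygy module.

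The case distinction is governed by the flat $X=H_1\cap H_2$. I first prove the key divisibility: if $|\A_{H_1\cap H_2}|>2$, pick $H'=\ker\alpha'\in\A_X\setminus\{H_1,H_2\}$ and write $\alpha'=a\alpha_1+b\alpha_2$ with $a,b\neq0$; then $\varphi_1(\alpha')\in S\alpha'\subseteq(\alpha_1,\alpha_2)$ and $\varphi_1(\alpha_2)\in S\alpha_2$ force $a\,u_1\in(\alpha_1,\alpha_2)$, hence $u_1\in(\alpha_1,\alpha_2)$ and $\bar u_1=\bar\alpha_2\bar v_1$ in $S/S\alpha_1$ with $v_1\in S_{c_1-1}$, $\bar v_1\neq0$; symmetrically $\bar u_2=\bar\alpha_1\bar v_2$ with $v_2\in S_{c_2-1}$. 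This divisibility is what lets a generator of strictly smaller degree enter $\bar D$. Concretely I will produce $\psi\in D(\A_{1,2})$ with $\alpha_1\psi\equiv\varphi_2\pmod{D(\A)}$, so $\deg\psi=c_2-1$. When $c_1=c_2$ the same $\psi$ also satisfies $\alpha_2\psi\equiv\varphi_1$, so $\bar\varphi_1,\bar\varphi_2$ are both multiples of $\bar\psi$, $\bar D$ is cyclic with $\alpha_1\alpha_2\bar\psi=\overline{\alpha_2\varphi_2}=0$, hence $\bar D\cong(S/S\alpha_1\alpha_2)[-(c_1-1)]$, giving case (2.1). When $c_1<c_2$, $\bar\varphi_1$ survives as a generator while $\bar\varphi_2=\bar\alpha_1\bar\psi$, and $\bar D$ is generated by $\bar\varphi_1,\bar\psi$ with a single cross-syzygy $g\bar\varphi_1+\alpha_2\bar\psi=0$ of degree $c_2$, giving case (2.2). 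When $|\A_{H_1\cap H_2}|=2$ there is no third hyperplane through $X$ to link the two coordinates of $\rho$, and $\bar D=S(\bar u_1,0)\oplus S(0,\bar u_2)\cong(S/S\alpha_1)[-c_1]\oplus(S/S\alpha_2)[-c_2]$, i.e.\ $D(\A_{1,2})=D(\A_1)+D(\A_2)$, giving case (1).

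The main obstacle is exactly this last determination of $\bar D$: showing that no extra generators occur when $|\A_X|=2$, and constructing $\psi$ together with the cross-syzygy when $|\A_X|>2$. Equivalently one must compute the single-deletion quotients $D(\A_{1,2})/D(\A_2)$ and $D(\A_{1,2})/D(\A_1)$ and the way they are glued along $X$; here $\A_{1,2}=\A_2\setminus H_1=\A_1\setminus H_2$ is a deletion from a \emph{non-free} (SPOG) arrangement, so \cref{diml} does not apply directly and I instead invoke the general classification of NT-free-2 resolutions in \cref{free_resolution_Aij}, combined with the bound $|\DS{\A_{1,2}}|\le\ell+2=5$ from reflexivity and \cref{proj_dim_Aij}. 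Once $\bar D$ is pinned down, the horseshoe lemma assembles the three stated resolutions, and in every case the first syzygy module is nonzero, so $\mathrm{pd}_S D(\A_{1,2})=1$ and $\A_{1,2}$ is never free. Finally, for the SPOG criterion in (2.1) I degree-chase the unique syzygy $\alpha_1\alpha_2\psi+\sum_i f_i\theta_i=0$ of degree $c_1+1$: its coefficient on $\psi$ has degree $2$, so by \cref{spogarr} and \cref{rmk-spog} the resolution acquires the SPOG shape (a level coefficient in $S_1$) if and only if some $f_i\in S_1$, i.e.\ $c_1+1-d_i=1$ for some $i$, i.e.\ $c_1=c_2=\max\{d_1,d_2,d_3\}$; in that case the $\theta_i$ of maximal degree serves as the level element with matching $f_i\in S_1$.
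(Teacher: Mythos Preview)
Your framework via the torsion quotient $\bar D=D(\A_{1,2})/D(\A)$ and the horseshoe lemma is a genuinely different route from the paper, which instead restricts to a third hyperplane $H\supset X=H_1\cap H_2$ and compares bases of the $2$-dimensional Ziegler multiarrangements $(\A^H,m^H)$ and $(\A_j^H,m_j^H)$ (\cref{coff-of-level-elt}, \cref{part_pf_thm_in_dim3}) to pin down which case of \cref{free_resolution_Aij} applies. Your minimality argument for the horseshoe resolution via $c_1\ge\max_i d_i$ is correct, and your elementary divisibility observation---that a third hyperplane through $X$ forces $u_j=\varphi_j(\alpha_j)\in(\alpha_1,\alpha_2)$---is nice and does not appear in the paper.

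There is, however, a real gap at the step where you pass from divisibility in the target of $\rho$ to an actual derivation. Knowing $\bar u_2=\bar\alpha_1\bar v_2$ in $S/S\alpha_2$ says only that $\varphi_2(\alpha_2)\in(\alpha_1,\alpha_2)$; it does \emph{not} produce $\psi\in D(\A_{1,2})$ with $\alpha_1\psi\equiv\varphi_2\pmod{D(\A)}$. In coordinates with $\alpha_1=x_1,\alpha_2=x_2$, you control only $\varphi_2(x_1),\varphi_2(x_2)\in Sx_1$, while the $\partial_{x_3}$-coefficient of $\varphi_2$ is unconstrained, so $\varphi_2/\alpha_1$ need not be polynomial and a correction from $D(\A)$ is not automatic. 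Equivalently, you have not shown $D(\A)+\ker\rho_2^1=D(\A_2)$ when $|\A_X|>2$; this is exactly the hypothesis that selects cases (1)--(3) of \cref{free_resolution_Aij}, so invoking that theorem does not close the gap---its case split is keyed to the $\ker\rho_i^j$ conditions, and you have only one direction of the dictionary with $|\A_X|$ and $c_1$ versus $c_2$ (the easy direction, via \cref{cond_for_inter>2} and \cref{min_gen_set}). The paper supplies the missing direction by the multiarrangement comparison in \cref{part_pf_thm_in_dim3}; your argument would need a substitute construction of $\psi$ before the remainder goes through.
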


The organization of this article is as follows: 
In Section 2, we introduce previous results and definitions. 
In Sections 3 and 4, we focus on proving the main results. Specifically, in Section 3, we present the proofs and provide examples in a general-dimensional setting. In Section 4, we shift our attention to the three-dimensional case and provide further examples.

\vspace{5mm}
\emph{Acknowledgements:}
We would like to thank Takuro Abe, Shizuo Kaji, Paul Muecksch, and Akiko Yazawa for many helpful discussions. We are particularly grateful to Takuro Abe for suggesting this problem and for using multiarrangemet to prove the three-dimensional case. 
The author has been supported by the China Scholarship Council.


\section{ Preliminaries}

In this section, we will summarize several results and definitions. 
Let $\A$ be an arrangement and $L(\A)$ be its intersection lattice.

Let $\theta_E=\sum_{i=1}^{\ell}x_i\partial_{x_i}$ be the \emph{Euler derivation}, which is a homogeneous derivation of $\deg \theta_E=1$ and always contained in $D(\A)$. Recall that for  every $H\in \A$, there exists a decomposition:
\begin{align}\label{eq:splitting}
    D(\A)\cong S\theta_E\oplus D_H(\A),
\end{align}
where $D_H(\A):=\{\theta\in D(\A)\mid \theta(\alpha_H)=0\}$ (see, for example,  \cite[Lemma 1.33]{O-T}).
This implies that $1\in \DS{\A}$. Furthermore, if $\A\neq \emptyset$ is free with $\EP{\A}=(d_1,\ldots,d_{\ell})$, we may assume that $d_1=\deg \theta_E=1$.

The following has been well-known and frequently utilized by specialists.
\begin{proposition}[\cite{11_in_degree_seq}]
  Let $H\in \A$. Then there is a polynomial $B$ of degree $|\A|-|\A^H|-1$ such that $\alpha_H \nmid B$, and
  $\theta(\alpha_H)\in (\alpha_H, B)$
for all $\theta\in D(\A\setminus \{H\})$. 
\end{proposition}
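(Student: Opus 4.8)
The plan is to identify the polynomial $B$ explicitly as the defining polynomial of the natural (Ziegler) restriction of $\A$ along $H$, with each multiplicity lowered by one. Write $\A' := \A \setminus \{H\}$. For each codimension-two flat $X \in \A^H$ put $m_X := |\A_X| - 1$, the number of hyperplanes of $\A'$ containing $X$, and choose a linear form $\alpha_X \in S_1$ defining $X$ inside $H$, taken so that $\alpha_X$ does not involve $\alpha_H$ (hence $\alpha_H \nmid \alpha_X$, and for distinct flats the $\alpha_X$ are pairwise non-proportional). I would then set
\[
B := \prod_{X \in \A^H} \alpha_X^{\,m_X - 1}.
\]
Since every $H' \in \A'$ determines the unique flat $H \cap H' \in \A^H$ to which it contributes, the numbers $m_X$ partition $\A'$, so $\sum_{X} m_X = |\A'| = |\A| - 1$; as there are $|\A^H|$ factors, $\deg B = (|\A| - 1) - |\A^H|$ as required, and $\alpha_H \nmid B$ by the choice of the $\alpha_X$. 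The entire content of the proposition then reduces to the divisibility $B \mid \bigl(\theta(\alpha_H) \bmod \alpha_H\bigr)$ for every $\theta \in D(\A')$.

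First I would reduce to a single flat. Because the forms $\alpha_X$ attached to distinct flats are pairwise coprime in the domain $S/(\alpha_H)$, it suffices to prove, for each fixed $X$, that $\alpha_X^{\,m_X - 1}$ divides $\theta(\alpha_H) \bmod \alpha_H$; the full divisibility by $B$ follows immediately. Fixing $X$, I can arrange coordinates so that the $m_X$ hyperplanes of $\A'$ through $X$ have defining forms $\alpha_i = a_i \alpha_H + \beta$, where $\beta$ is a lift of $\alpha_X$ avoiding $\alpha_H$ and the scalars $a_i$ are pairwise distinct. Writing $P := \theta(\alpha_H)$ and $Q := \theta(\beta)$, membership $\theta \in D(\A')$ gives $(a_i \alpha_H + \beta) \mid (Q + a_i P)$ for every $i$.

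The key computational step is the Wronskian-type identity
\[
\alpha_H\, Q - \beta\, P \;=\; \alpha_H (Q + a_i P) - (a_i \alpha_H + \beta)\, P ,
\]
whose right-hand side is divisible by $a_i \alpha_H + \beta$ for each $i$; as these $m_X$ linear forms are pairwise coprime, their product divides $\alpha_H Q - \beta P$. Reducing this relation modulo $\alpha_H$ collapses each factor $a_i \alpha_H + \beta$ to $\beta$, so $\beta^{m_X}$ divides $-\beta\,(P \bmod \alpha_H)$ in the domain $S/(\alpha_H)$; cancelling one factor of $\beta$ yields $\beta^{m_X - 1} \mid \bigl(\theta(\alpha_H) \bmod \alpha_H\bigr)$, which is exactly the per-flat claim. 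I expect the principal obstacle to be organizational rather than conceptual: one must choose the forms $\alpha_X$ and the transverse forms $\beta$ coherently across all flats so that the local divisibilities genuinely multiply (this is precisely where coprimality of the $\alpha_X$ in $S/(\alpha_H)$ is used), and one must verify carefully that each $H' \in \A'$ lies over the single flat $H \cap H'$, so that the multiplicities partition $\A'$ and the degree count $|\A| - |\A^H| - 1$ comes out exact.
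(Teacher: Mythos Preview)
Your argument is correct and is essentially the classical proof: the polynomial $B$ is exactly $\prod_{X\in\A^H}\overline{\alpha}_X^{\,m_X-1}$, and the Wronskian identity $\alpha_H Q-\beta P=\alpha_H(Q+a_iP)-(a_i\alpha_H+\beta)P$ together with coprimality of the localized defining forms gives the required divisibility flat by flat. The only point worth tightening is purely expository: once you fix the lift $\beta$ of $\alpha_X$ there is no separate ``lift'' to introduce (your $\alpha_X$ already lives in $S_1$), and the fact that every $\alpha_i$ lies in $\mathrm{span}(\alpha_H,\beta)$ should be justified by noting that the space of linear forms vanishing on the codimension-two flat $X$ is two-dimensional.

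As for comparison with the paper: there is nothing to compare. The paper does not prove this proposition; it merely quotes it from Terao's original work \cite{11_in_degree_seq} and immediately uses it to derive the corollary (\cref{sum}) that governs the rest of the argument. Your write-up therefore supplies what the paper omits, and it matches the standard proof found in the cited reference.
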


From the above proposition, we can easily derive the following proposition. We include it here since we frequently utilize it.

\begin{proposition}[Corollary 3.3 in  \cite{spog}]\label{sum} 
  Let $H\in \A$ and $\A'=\A\setminus \{H\}$. Assume that there exists $\varphi\in D(\A')$ with $\deg \varphi=|\A'| -|\A^H|$ such that $\varphi\notin D(\A)$. Then $D(\A')=D(\A)+S\varphi$.
\end{proposition}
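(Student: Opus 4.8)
The plan is to prove \cref{sum} as a direct consequence of the preceding proposition, which we assume as stated. Recall that the preceding proposition furnishes a polynomial $B$ of degree $|\A|-|\A^H|-1$ with $\alpha_H\nmid B$ such that $\theta(\alpha_H)\in(\alpha_H,B)$ for every $\theta\in D(\A')$. Since $|\A|=|\A'|+1$, the degree of $B$ equals $|\A'|-|\A^H|$, which is exactly $\deg\varphi$. First I would observe that the inclusion $D(\A)+S\varphi\subseteq D(\A')$ is immediate: we have $D(\A)\subseteq D(\A')$ because deleting a hyperplane only relaxes the defining conditions, and $\varphi\in D(\A')$ by hypothesis, so any $S$-combination $\eta+s\varphi$ with $\eta\in D(\A)$ lies in $D(\A')$. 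The substance of the proof is the reverse inclusion $D(\A')\subseteq D(\A)+S\varphi$.

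For the reverse inclusion, I would take an arbitrary $\theta\in D(\A')$ and show it can be written as $\eta+s\varphi$ with $\eta\in D(\A)$ and $s\in S$. The key point is that membership in $D(\A)$ differs from membership in $D(\A')$ only by the single condition $\theta(\alpha_H)\in S\alpha_H$. By the preceding proposition applied to $\theta$, we know $\theta(\alpha_H)\in(\alpha_H,B)$, so we may write $\theta(\alpha_H)=a\alpha_H+bB$ for some $a,b\in S$. The idea is to subtract off an appropriate multiple of $\varphi$ to cancel the $B$-part. Applying the proposition to $\varphi$ itself, we get $\varphi(\alpha_H)=a'\alpha_H+b'B$; here the crucial observation is that $b'$ must be a nonzero scalar. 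Indeed, since $\varphi\notin D(\A)$, we have $\varphi(\alpha_H)\notin S\alpha_H$, so the $B$-coefficient cannot vanish; and comparing degrees, $\deg\varphi(\alpha_H)=\deg\varphi=\deg B$, which forces $b'\in\K^\times$ and $a'=0$ (as any nonzero $a'\alpha_H$ would raise the degree). Thus $\varphi(\alpha_H)=b'B$ with $b'$ a unit.

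Granting this, I would set $\eta:=\theta-(b/b')\varphi$ and verify $\eta\in D(\A)$. Since both $\theta$ and $\varphi$ lie in $D(\A')$, so does $\eta$; it remains only to check the extra condition at $H$. Computing, $\eta(\alpha_H)=\theta(\alpha_H)-(b/b')\varphi(\alpha_H)=(a\alpha_H+bB)-(b/b')(b'B)=a\alpha_H\in S\alpha_H$, so indeed $\eta\in D(\A)$. Then $\theta=\eta+(b/b')\varphi\in D(\A)+S\varphi$, completing the reverse inclusion. Combined with the easy forward inclusion, this yields $D(\A')=D(\A)+S\varphi$.

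The main obstacle is establishing that the $B$-coefficient $b'$ of $\varphi(\alpha_H)$ is a nonzero scalar, since the whole argument hinges on being able to divide by it. The nonvanishing follows cleanly from $\varphi\notin D(\A)$ together with $\alpha_H\nmid B$, and the scalar claim follows from the degree hypothesis $\deg\varphi=|\A'|-|\A^H|=\deg B$; I would want to state these degree comparisons carefully, noting that $\varphi$ is homogeneous (or may be taken so, using the graded structure of $D(\A')$) so that $\varphi(\alpha_H)$ is homogeneous of degree $\deg\varphi$ and the decomposition into $\alpha_H$- and $B$-parts respects the grading. This degree bookkeeping is the only delicate point; the rest is formal.
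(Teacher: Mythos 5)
Your proof is correct and follows exactly the route the paper intends: the paper does not spell out a proof of \cref{sum} (it cites Corollary 3.3 of \cite{spog} and remarks that it follows easily from the preceding proposition), and your argument---writing $\theta(\alpha_H)=a\alpha_H+bB$, showing the $B$-coefficient of $\varphi(\alpha_H)$ is a unit $b'\in\K^\times$ via homogeneity, $\deg B=|\A'|-|\A^H|=\deg\varphi$, and $\varphi\notin D(\A)$, then correcting by $(b/b')\varphi$---is precisely that derivation. One small repair: your claim that $a'=0$ is unjustified, since $a'$ may be any homogeneous element of $S_{d-1}$ with $d=\deg\varphi$ (then $a'\alpha_H$ still has degree $d$, so it does not ``raise the degree''); this slip is harmless because $\eta(\alpha_H)=\bigl(a-(b/b')a'\bigr)\alpha_H\in S\alpha_H$ regardless, so the conclusion stands.
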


We now define the \emph{Euler restriction map}  $\rho: D(\A)\rightarrow D(\A^H)$ for an arrangement $\A$ by taking modulo $\alpha_H$. Additionally, let $\A'=\A\setminus \{H\}$.
    We have an exact sequence as follows:
{\begin{proposition}[Proposition 4.45 in \cite{O-T}]\label{exact_seq}

    \begin{align*}
        0\rightarrow D(\A')\xrightarrow{\cdot \alpha_H}D(\A)\xrightarrow{\rho}D(\A^H).
    \end{align*}
\end{proposition}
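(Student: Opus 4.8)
The plan is to verify exactness directly by a coefficient computation after a convenient choice of coordinates. Since the statement is coordinate-free, I may assume without loss of generality that $\alpha_H = x_\ell$, so that $H = \ker x_\ell$ and the restriction $\A^H$ lives in the coordinate ring $\bar S = S/(x_\ell) \cong \K[x_1,\ldots,x_{\ell-1}]$. In these coordinates $\rho$ sends $\theta = \sum_{i=1}^\ell f_i \partial_{x_i}$ to $\sum_{i=1}^{\ell-1} \bar f_i \partial_{x_i}$, where $\bar f_i$ denotes the image of $f_i$ modulo $x_\ell$ (the compatibility $\rho(\theta)\in D(\A^H)$ being part of the definition recalled just above).

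First I would check that $\cdot\,\alpha_H$ is well defined and injective. For $\theta \in D(\A')$ and any $H'=\ker\alpha_{H'}\in\A$ one has $(\alpha_H\theta)(\alpha_{H'}) = \alpha_H\,\theta(\alpha_{H'})$; this lies in $S\alpha_{H'}$ when $H'\in\A'$ because $\theta\in D(\A')$, and it lies in $S\alpha_H$ trivially when $H'=H$, so $\alpha_H\theta\in D(\A)$. Injectivity is immediate, since $S$ is a domain and $\Der S$ is free hence torsion-free, so multiplication by the nonzero element $\alpha_H$ annihilates nothing. This gives exactness at $D(\A')$.

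Next, the composition $\rho\circ(\cdot\,\alpha_H)$ vanishes: for $\theta=\sum f_i\partial_{x_i}$ the derivation $\alpha_H\theta = x_\ell\theta$ has every coefficient $x_\ell f_i$ divisible by $x_\ell$, hence maps to $0$ under reduction modulo $x_\ell$. The substantive step is then exactness in the middle, namely $\ker\rho\subseteq\operatorname{im}(\cdot\,\alpha_H)$. Take $\theta=\sum_{i=1}^\ell f_i\partial_{x_i}\in D(\A)$ with $\rho(\theta)=0$. Then $x_\ell\mid f_i$ for $i<\ell$ by the vanishing of $\rho(\theta)$, while $x_\ell\mid f_\ell$ holds because $\theta(x_\ell)=f_\ell\in Sx_\ell$ by the defining condition at $H$. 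Writing $f_i = x_\ell g_i$ produces a derivation $\psi=\sum g_i\partial_{x_i}$ with $\theta = \alpha_H\psi$, and it remains only to show $\psi\in D(\A')$.

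The main obstacle — and the one genuinely arithmetical point — is this last verification. For each $H'\in\A'$ we know $\alpha_H\,\psi(\alpha_{H'}) = \theta(\alpha_{H'})\in S\alpha_{H'}$, so $\alpha_{H'}$ divides $\alpha_H\,\psi(\alpha_{H'})$. Since $H'\neq H$, the linear forms $\alpha_H$ and $\alpha_{H'}$ are non-proportional and therefore coprime in the UFD $S$; hence $\alpha_{H'}\mid\psi(\alpha_{H'})$, giving $\psi(\alpha_{H'})\in S\alpha_{H'}$ for every $H'\in\A'$ and so $\psi\in D(\A')$. This yields $\theta=\alpha_H\psi\in\operatorname{im}(\cdot\,\alpha_H)$ and completes exactness at $D(\A)$. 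Note that no exactness is asserted at $D(\A^H)$, consistent with the fact that $\rho$ need not be surjective in general.
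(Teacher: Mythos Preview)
Your argument is correct: the choice of coordinates, the well-definedness and injectivity of $\cdot\,\alpha_H$, the vanishing of the composite, and the coprimality argument for the key inclusion $\ker\rho\subseteq\operatorname{im}(\cdot\,\alpha_H)$ are all handled properly. Note, however, that the paper does not supply its own proof of this proposition; it is quoted as Proposition~4.45 from Orlik--Terao and used as a preliminary. Your direct verification is essentially the standard one found there, so there is nothing further to compare.
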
}

Moving forward, let's delve into some results pertaining to the logarithmic derivation module $D(\A)$ and its freeness.


\begin{theorem}[{Addition-Deletion Theorem.} Removal Theorem in \cite{11_in_degree_seq}]\label{a-d-thm}
Let $H\in\A$, $\A':=\A \setminus \{H\}$ and $\A^{''}:=\A^H$. Then two of the following imply the third:

\begin{enumerate}[label=(\arabic*)]
 \item $\A$ is free with $\EP{\A}=(d_1,\ldots,d_{\ell})$.
 \item  $\A'$ is free with $\EP{\A'}=(d_1,\ldots,d_{\ell-1},d_{\ell}-1)$.
 \item  $\A''$ is free with $\EP{\A''}=(d_1,\ldots,d_{\ell-1})$.
\end{enumerate}
Moreover, all the three above hold true  if $\A$ and $\A'$ are free.
\end{theorem}

\begin{theorem}[Saito's criterion. \cite{ref_17_in_spog}]\label{saito}
    Let $\theta_1,\ldots,\theta_{\ell}\in D(\A)$ be homogenous and linearly independent over $S$. Then $\A$ is free with basis $\{\theta_1,\ldots,\theta_{\ell}\}$  if and only if $$\sum_{i=1}^{\ell}\deg \theta_i=|\A| .$$
\end{theorem}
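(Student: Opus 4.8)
The proof rests on the coefficient matrix $M=(f_{ij})$ of the given derivations, where $\theta_j=\sum_{i}f_{ij}\partial_{x_i}$, together with a comparison of $\det M$ with $Q(\A)=\prod_{H\in\A}\alpha_H$. Since each $\theta_j$ is homogeneous of degree $d_j:=\deg\theta_j$, every entry in the $j$-th column of $M$ is homogeneous of degree $d_j$, so $\det M$, when nonzero, is homogeneous of degree $\sum_j d_j$; and because $S$ is a domain, linear independence of $\theta_1,\ldots,\theta_\ell$ over $S$ is equivalent to $\det M\neq 0$. The plan is to reduce the stated degree criterion to the determinantal identity $\det M\doteq Q(\A)$ (equality up to a nonzero scalar). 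Indeed, granting the divisibility $Q(\A)\mid\det M$ below and using $\deg Q(\A)=|\A|$, in the homogeneous setting with $\det M\neq 0$ the condition $\sum_j d_j=|\A|$ is equivalent to $\det M\doteq Q(\A)$.

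First I would prove the divisibility lemma: for \emph{any} derivations $\eta_1,\ldots,\eta_\ell\in D(\A)$ with coefficient matrix $N$, we have $Q(\A)\mid\det N$. Writing $\alpha_H=\sum_i c_i x_i$ with $c_i\in\K$, the membership $\eta_k(\alpha_H)=\sum_i c_i f_{ik}\in(\alpha_H)$ for every $k$ says that the constant row vector $c=(c_1,\ldots,c_\ell)$ satisfies $cN\equiv 0$ modulo $\alpha_H$. Reducing modulo the prime $\alpha_H$ exhibits a nonzero left null vector of $\overline N$ over the domain $S/(\alpha_H)$, forcing $\det\overline N=0$, i.e.\ $\alpha_H\mid\det N$. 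Since distinct hyperplanes have pairwise non-proportional forms and $S$ is a UFD, their product $Q(\A)$ divides $\det N$.

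For the direction $\sum_j d_j=|\A|\Rightarrow$ basis, the lemma gives $Q(\A)\mid\det M$, and the degree count then forces $\det M\doteq Q(\A)$. Given an arbitrary $\theta\in D(\A)$, I would solve $\theta=\sum_j h_j\theta_j$ by Cramer's rule, $h_j=\det M_j/\det M$, where $M_j$ replaces the $j$-th column of $M$ by the coefficient vector of $\theta$. The columns of $M_j$ are coefficient vectors of members of $D(\A)$, so the lemma yields $Q(\A)\mid\det M_j$; hence $\det M\doteq Q(\A)$ divides $\det M_j$ and $h_j\in S$. Thus $\{\theta_j\}$ generates $D(\A)$, and with linear independence it is a basis, so $\A$ is free.

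The hard direction is basis $\Rightarrow\det M\doteq Q(\A)$, where the obstacle is controlling the exact multiplicity of each $\alpha_H$ in $\det M$ and ruling out extraneous factors. I would argue locally at each height-one prime $\mathfrak p$ of $S$. Localizing at $\mathfrak p$ makes $\alpha_K$ a unit for every hyperplane $K$ with $\alpha_K\notin\mathfrak p$, so its defining condition becomes vacuous and $D(\A)_{\mathfrak p}$ coincides with $D(\B)_{\mathfrak p}$, where $\B$ is the subarrangement of hyperplanes with $\alpha_H\in\mathfrak p$; this is $\{H\}$ when $\mathfrak p=(\alpha_H)$ and is empty otherwise. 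In the first case $D(\{H\})$ is free with a basis (e.g.\ $x_1\partial_{x_1},\partial_{x_2},\ldots,\partial_{x_\ell}$ after choosing coordinates with $\alpha_H=x_1$) whose coefficient determinant has $\mathfrak p$-valuation $1$; in the second case $\Der S$ has the standard basis with coefficient determinant a unit. Since $\{\theta_j\}$ is also an $S_{\mathfrak p}$-basis, the change-of-basis matrix is invertible over $S_{\mathfrak p}$, so $v_{\mathfrak p}(\det M)=1$ at each $(\alpha_H)$ and $0$ at every other height-one prime. Hence $\det M\doteq\prod_H\alpha_H=Q(\A)$, i.e.\ $\sum_j d_j=|\A|$. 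I expect this localization step, namely verifying that forming $D(\A)$ commutes with localization and reduces to the local arrangement, to be the main technical point; the divisibility and Cramer arguments are routine once it is in place.
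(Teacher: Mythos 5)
The paper does not prove this statement at all: it is imported verbatim as Saito's criterion with the citation \cite{ref_17_in_spog}, so there is no internal proof to compare against, and your proposal has to be judged on its own merits. It is correct, and it is essentially the standard proof of the criterion (cf.\ Orlik--Terao, Theorem 4.19). The divisibility lemma is right: reducing the coefficient matrix modulo the prime $(\alpha_H)$ exhibits the constant left null vector $(c_1,\ldots,c_\ell)$ over the domain $S/(\alpha_H)$, and since distinct hyperplanes have pairwise non-proportional linear forms in the UFD $S$, the product $Q(\A)$ divides $\det N$. The Cramer's-rule step is the classical argument for the direction ``degree count $\Rightarrow$ basis'': with $\det M$ nonzero (by the independence hypothesis), homogeneous of degree $\sum_i d_i=|\A|=\deg Q(\A)$, and divisible by $Q(\A)$, the quotient is a nonzero scalar, and then $Q(\A)\mid\det M_j$ makes every $h_j$ polynomial. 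Your localization argument for the converse is also sound, and you correctly identified it as the substantive point: $D(\A)=\bigcap_{H\in\A}D(\{H\})$ is a finite intersection of submodules of $\Der S$, localization is flat and so commutes with it; for $\alpha_K\notin\mathfrak{p}$ one has $D(\{K\})_{\mathfrak{p}}\supseteq\alpha_K(\Der S)_{\mathfrak{p}}=(\Der S)_{\mathfrak{p}}$, so those conditions drop out; $D(\{H\})$ is visibly free with coefficient determinant $\doteq\alpha_H$; and since two $S_{\mathfrak{p}}$-bases of the free module $D(\A)_{\mathfrak{p}}$ differ by a matrix in $\mathrm{GL}_\ell(S_{\mathfrak{p}})$, the valuation $v_{\mathfrak{p}}(\det M)$ equals $1$ at each $(\alpha_H)$ and $0$ at every other height-one prime, forcing $\det M\doteq Q(\A)$ in the UFD $S$ and hence $\sum_i d_i=|\A|$. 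This localization route replaces the more common explicit multiplicity count (e.g.\ via expressing $Q(\A)\partial_{x_i}\in D(\A)$ in the basis and then bounding the multiplicity of each $\alpha_H$ in $\det M$); what it buys is uniform control of all multiplicities at once. The only cosmetic caveats: note explicitly that units of $S$ are the nonzero scalars (so valuation data determines $\det M$ up to $\K^*$), and that homogeneity is used only to translate $\det M\doteq Q(\A)$ into the degree condition, not in the determinantal identity itself.
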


Let us introduce the multiarrangement theory.
A \emph{multiarrangement} is a pair $(\A,m)$,  where $\A$ is a hyperplane arrangement in $V$ and multiplicity $m$ is a map: $\A\rightarrow \Z_{\geq 0}$. Define $|m|=\sum_{H\in \A}m(H)$.
If $m(H)=1$ for all $H\in \A$, we say that the multiarrangement $(\A,m)$ is a hyperplane arrangement, which is also called a simple arrangement.
The \emph{logarithmic derivation module} $D(\A,m)$ is defined as follows:
\begin{align*}
  D(\A,m)=\{\theta \in \Der S\mid \theta(\alpha_H)\in S\alpha_H^{m(H)}\  \text{for   any} \  H=\ker \alpha_H\in \A \}.  
\end{align*}
The module $D(\A,m)$ is also a reflexive graded $S$-module, which is not always free. We can define the concepts of freeness and exponents for $(\A,m)$ in the same way as for simple arrangements.


\begin{definition}[\cite{ref_24_in_spog}] 
For an arrangement $\A$  and $H\in \A$, define the Ziegler multiplicity $m^H: \A^H\rightarrow \Z_{\geq 0}$ by $m^H(X):=| \{L\in \A\setminus\{H\}\mid  L\cap H=X\}| $ for $X\in \A^H$. The pair $(\A^H,m^H)$ is called the \emph{Ziegler restriction} of $\A$ onto $H$. Also, there is a \emph{Ziegler restriction map}:
\begin{align*}
    \pi: D_H(\A)\rightarrow D(\A^H,m^H)
\end{align*}
by taking modulo $\alpha_H$. In particular, there is an exact sequence:
\begin{align*}
    0\rightarrow D_H(\A)\xrightarrow{\cdot \alpha_H} D_H(\A)\xrightarrow{\pi} D(\A^H,m^H).
\end{align*}
\end{definition}

\begin{theorem}[Theorem 11 in \cite{ref_24_in_spog}]\label{Ziegler free}
    Assume that $\A$ is free with  exponents $\EP{\A}=(1,d_2,\ldots,d_{\ell})$. Then for any $H\in \A$, the Ziegler restriction $(\A^H,m^H)$ is also free with $\EP{\A^H,m^H}=(d_2,\ldots,d_{\ell})$. {Explicitly}, for the Ziegler restriction $\pi: D_H(\A)\rightarrow D(\A^H,m^H)$, any basis $\theta_2,\ldots,\theta_{\ell}$ for $D_H(\A)$  such that $\pi(\theta_2),\ldots,\pi (\theta_{\ell})$ {form} a basis for $D(\A^H,m^H)$. In particular, $\pi$ is surjective when $\A$ is free.
\end{theorem}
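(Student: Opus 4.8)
\noindent The plan is to recognise this as Ziegler's theorem and to prove it by showing that the images $\pi(\theta_2),\ldots,\pi(\theta_\ell)$ satisfy the hypotheses of (the multiarrangement analog of) Saito's criterion. First I would extract freeness of the relevant summand from the decomposition \eqref{eq:splitting}: since $\A$ is free with $\EP{\A}=(1,d_2,\ldots,d_\ell)$ and the exponent $1$ is accounted for by $\theta_E$, the complementary summand $D_H(\A)$ is a graded direct summand of the free module $D(\A)$, hence finitely generated graded projective, hence graded free of rank $\ell-1$. Choosing any homogeneous basis $\theta_2,\ldots,\theta_\ell$ of $D_H(\A)$, the list $\theta_E,\theta_2,\ldots,\theta_\ell$ is then a basis of $D(\A)$, so its degree multiset equals the well-defined invariant $\{1,d_2,\ldots,d_\ell\}$; comparing with $\deg\theta_E=1$ forces $\deg\theta_i=d_i$. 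Because $\pi$ is the degree-zero graded map given by reduction modulo $\alpha_H$, each $\pi(\theta_i)$ is homogeneous of degree $d_i$ in $D(\A^H,m^H)$, and it remains to verify that the $\pi(\theta_i)$ are $S^H$-linearly independent and that their degrees sum to $|m^H|$.

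\noindent For the degree count I would observe that $|m^H|=\sum_{X\in\A^H}m^H(X)=|\A\setminus\{H\}|=|\A|-1$, directly from the definition of the Ziegler multiplicity. On the other hand, applying \cref{saito} to the free arrangement $\A$ gives $1+d_2+\cdots+d_\ell=|\A|$, so $d_2+\cdots+d_\ell=|\A|-1=|m^H|$, exactly as required.

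\noindent For linear independence I would use the Ziegler exact sequence $0\to D_H(\A)\xrightarrow{\cdot\alpha_H}D_H(\A)\xrightarrow{\pi}D(\A^H,m^H)$, whose exactness at the middle term identifies $\ker\pi=\alpha_H D_H(\A)$. Suppose $\sum_i \bar g_i\,\pi(\theta_i)=0$ with $\bar g_i\in S^H$; lifting each $\bar g_i$ to $g_i\in S$ gives $\sum_i g_i\theta_i\in\ker\pi=\alpha_H D_H(\A)$, so $\sum_i g_i\theta_i=\alpha_H\sum_i h_i\theta_i$ for some $h_i\in S$. Since $\theta_2,\ldots,\theta_\ell$ is an $S$-basis of $D_H(\A)$, matching coefficients yields $g_i=\alpha_H h_i$, whence $\bar g_i=0$ for all $i$. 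With both hypotheses in hand, Saito's criterion for multiarrangements shows $\pi(\theta_2),\ldots,\pi(\theta_\ell)$ is a basis of $D(\A^H,m^H)$, so $(\A^H,m^H)$ is free with $\EP{\A^H,m^H}=(d_2,\ldots,d_\ell)$; surjectivity of $\pi$ is then immediate, since its image already contains a generating set, and the argument used no special feature of the chosen basis, giving the stated ``any basis'' conclusion.

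\noindent The main obstacle is that this rests on Saito's criterion for multiarrangements, which is the multiarrangement refinement of the simple-arrangement statement \cref{saito} and is not available in the excerpt. To keep the proof self-contained I would instead verify its determinantal hypothesis by hand. After a linear change of coordinates making $\alpha_H=x_1$, every $\theta\in D_H(\A)$ has vanishing $\partial_{x_1}$-coefficient, so the coefficient matrix of the basis $\theta_E,\theta_2,\ldots,\theta_\ell$ of $D(\A)$ is block triangular with top-left entry $x_1$; expanding and invoking the determinantal form of Saito's criterion for $\A$ gives $x_1\det\bigl[\theta_i(x_j)\bigr]_{i,j\ge 2}=c\,Q(\A)=c\,x_1\prod_{L\ne H}\alpha_L$ for a nonzero constant $c$, hence $\det\bigl[\theta_i(x_j)\bigr]_{i,j\ge 2}=c\prod_{L\ne H}\alpha_L$. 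Reducing modulo $x_1$ and grouping the $m^H(X)$ hyperplanes over each $X\in\A^H$ identifies the reduced determinant $\det\bigl[\pi(\theta_i)(x_j)\bigr]_{i,j\ge 2}$ with $\prod_{X\in\A^H}\alpha_X^{m^H(X)}$ up to a nonzero scalar, which is precisely the defining polynomial of $(\A^H,m^H)$. The delicate point here is the bookkeeping that the reduction modulo $x_1$ does not drop the degree of the determinant, so that no hidden cancellation collapses it; controlling this is where I expect the real work of a fully self-contained argument to lie.
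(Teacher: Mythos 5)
The paper offers no proof of this statement to compare against: it is imported verbatim as Theorem 11 of Ziegler's paper \cite{ref_24_in_spog}. Judged on its own, your argument is correct, and it is essentially Ziegler's original one: freeness of $D_H(\A)$ with degrees $(d_2,\ldots,d_\ell)$ from the splitting \eqref{eq:splitting}, the count $d_2+\cdots+d_\ell=|\A|-1=|m^H|$ (via \cref{saito} applied to $\A$ and the definition of $m^H$), linear independence of the images from the exactness $\ker\pi=\alpha_H D_H(\A)$ of the Ziegler sequence, and then Saito's criterion for multiarrangements, which is proved in the same paper of Ziegler.

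Two remarks on your fallback determinantal argument. First, the ``delicate point'' you flag at the end is actually a non-issue: the identity $\det\bigl[\theta_i(x_j)\bigr]_{i,j\ge 2}=c\prod_{L\ne H}\alpha_L$ holds in $S$, and reduction modulo $x_1$ is a ring homomorphism commuting with determinants, so the reduced determinant equals $c\prod_{L\ne H}\overline{\alpha_L}$, where each factor is nonzero because $\overline{\alpha_L}=0$ would force $L=H$; hence no cancellation can occur and the degree is exactly $|\A|-1=|m^H|$. Second, even after computing the determinant you still tacitly use the converse direction of the multiarrangement Saito criterion (determinant equal to $\prod_{X\in\A^H}\alpha_X^{m^H(X)}$ up to a nonzero scalar implies basis); for full self-containment, close with the standard Cramer's-rule step: for any $\theta\in D(\A^H,m^H)$, the numerator determinants obtained by replacing a row by $\theta$ are again divisible by $\prod_X\alpha_X^{m^H(X)}$ (work one $X$ at a time, where one column is divisible by $\alpha_X^{m^H(X)}$, and use coprimality), so the Cramer coefficients are polynomial and the images generate. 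This is the same one-paragraph argument as in the simple case, and with it your proof is complete and faithful to the cited source.
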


\begin{lemma}[Lemma 4.2 and Lemma 4.3 in \cite{abe-n}]\label{multi -1}
  Let $\A$ be central line arrangement and let $m,m'$ be multiplicities on $\A$ such that $|m|=|  m'| +1$ and $m(H)\geq m'(H)$ for any $H\in \A$. 
  If $\EP{\A,m'}=(a,b)$, then $\EP{\A,m}=(a+1,b)$ or $(a,b+1)$.
\end{lemma}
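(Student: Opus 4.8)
The plan is to leverage the special feature of dimension two: every multiarrangement on a central line arrangement in $\K^2$ is free of rank $2$, because $D(\A,m)$ is a reflexive graded module over the two-dimensional polynomial ring $S=\K[x,y]$, and reflexive graded modules over such a ring are free. Hence both $(\A,m')$ and $(\A,m)$ are automatically free; the entire content of the lemma is to pin down \emph{where} the exponents of $(\A,m)$ sit relative to those of $(\A,m')$. Writing $\EP{\A,m}=(a',b')$, the multiarrangement version of Saito's criterion (cf.\ \cref{saito}) gives $a+b=|m'|$ and $a'+b'=|m|=|m'|+1$, so the two exponents increase in total by exactly $1$; I must show this increase is concentrated in a single coordinate.

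First I would isolate the hyperplane where the multiplicity changes. Since $m(H)\geq m'(H)$ for all $H$ and $|m|=|m'|+1$, there is exactly one $H_0=\ker\alpha_0\in\A$ with $m(H_0)=m'(H_0)+1$ and $m=m'$ on every other line. This forces the inclusion $D(\A,m)\subseteq D(\A,m')$, since the only tightened requirement is $\theta(\alpha_0)\in S\alpha_0^{m'(H_0)+1}\subseteq S\alpha_0^{m'(H_0)}$, while all other conditions coincide.

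Next I would build the measuring map. Set $R:=S/(\alpha_0)\cong\K[t]$ and define the graded $S$-linear map $\Phi\colon D(\A,m')\to R$ by $\Phi(\theta)=\overline{\theta(\alpha_0)/\alpha_0^{m'(H_0)}}$, that is, divide $\theta(\alpha_0)$ by $\alpha_0^{m'(H_0)}$ (legitimate on $D(\A,m')$) and reduce modulo $\alpha_0$. A routine check gives $\ker\Phi=D(\A,m)$. The image is a graded $R$-submodule of $R\cong\K[t]$, hence a homogeneous ideal, so it is either $0$ or of the form $t^cR$ for some $c\geq 0$; and it is nonzero, since otherwise $D(\A,m)=D(\A,m')$ would contradict $|m|\neq|m'|$. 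Tracking degrees, $\Phi$ carries the degree-$d$ part of $D(\A,m')$ into the degree-$(d-m'(H_0))$ part of $R$, so in the grading of the source the image has Hilbert series $q^{e}/(1-q)$ with $e:=c+m'(H_0)$.

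Finally I would run the Hilbert-series bookkeeping on the short exact sequence $0\to D(\A,m)\to D(\A,m')\xrightarrow{\Phi}t^cR\to 0$. From the free resolutions, $H(D(\A,m'),q)=(q^a+q^b)/(1-q)^2$ and $H(D(\A,m),q)=(q^{a'}+q^{b'})/(1-q)^2$, so subtracting the image series and clearing denominators yields the polynomial identity $q^{a'}+q^{b'}=q^a+q^b-q^e+q^{e+1}$. Because the left-hand side has nonnegative coefficients, the term $-q^e$ must cancel against $q^a$ or $q^b$, forcing $e=a$ or $e=b$; substituting back gives $\{a',b'\}=\{a+1,b\}$ or $\{a,b+1\}$ respectively, which is precisely the claim (the degenerate case $a'=b'$ merely reflects the unordered nature of the exponents). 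The one genuinely delicate point is this cancellation argument: I must verify that the image contributes exactly the two-term perturbation $-q^e+q^{e+1}$ to the numerator — a consequence of its being a single principal graded ideal of $\K[t]$ — so that no other splitting of the exponents is numerically possible. Everything else is formal once freeness in dimension two and the map $\Phi$ are in place.
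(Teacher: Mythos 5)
Your proof is correct, and there is nothing in the paper to compare it against: \cref{multi -1} is imported verbatim from Abe--Numata \cite{abe-n} (Lemmas 4.2 and 4.3) with no internal proof, so your argument stands as a self-contained alternative. All the steps check out: in dimension two both $(\A,m')$ and $(\A,m)$ are free of rank two (reflexivity plus the graded Auslander--Buchsbaum argument over $\K[x_1,x_2]$); there is a unique $H_0$ where the multiplicity jumps, giving $D(\A,m)\subseteq D(\A,m')$; your map $\Phi$ is a well-defined graded $S$-linear map of degree $-m'(H_0)$ with kernel exactly $D(\A,m)$; its image is a nonzero homogeneous ideal $t^cR$ of $R\cong\K[t]$ (nonzero because equality of the two modules would contradict the Ziegler--Saito exponent sums $a+b=|m'|$ and $a'+b'=|m|$); and the resulting identity $q^{a'}+q^{b'}=q^a+q^b-q^e+q^{e+1}$ forces $e\in\{a,b\}$ by nonnegativity of coefficients, including the degenerate case $a=b=e$. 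For comparison, the standard elementary route avoids Hilbert series entirely: writing $\EP{\A,m}=(a'',b'')$ with $a\le b$ and $a''\le b''$, the inclusion $D(\A,m)\subseteq D(\A,m')$ gives $a''\ge a$; since every homogeneous element of $D(\A,m')$ of degree strictly less than $b$ is an $S$-multiple of the degree-$a$ basis element, two $S$-independent elements of $D(\A,m)$ cannot both have degree below $b$, whence $b''\ge b$; combined with $a''+b''=a+b+1$ this pins down $(a'',b'')\in\{(a+1,b),(a,b+1)\}$. Your version costs a little more machinery but buys a finer invariant for free: the integer $e=c+m'(H_0)$, i.e.\ the initial degree of the image ideal of $\Phi$, identifies exactly which exponent jumps, which the elementary argument does not exhibit.
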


Finally, we present the basics of free resolutions.

\begin{definition}\label{def_resol}
    For arrangement $\B$, we denote the minimal free resolution of the module $D(\B)$ by 
\begin{align*}
    0 \rightarrow M_k \xrightarrow{R_k} M_{k-1} \xrightarrow{R_{k-1}} \cdots 
    \xrightarrow{R_{2}} M_{1} \xrightarrow{R_{1}} M_{0} \rightarrow  
    D(\B) \rightarrow 0,
\end{align*}
where $k=pd_S(D(\B))$ is the projective dimension of $D(\B)$.
Here, $R_i \ (i=1,\ldots,k)$ are represented by matrices acting by multiplication from the left.
We use $R_i(j)$ to denote the $j$-th row of $R_i$, and $R_i(j_1,j_2)$ to denote the $(j_1,j_2)$ entry of $R_i$.
\end{definition}

Let $JQ(\A)$ be the Jacobian ideal of $Q(\A)$
generated by $\frac{\partial Q(\A)}{\partial x_i}$
for $i=1,\ldots,n$.
Since 
$D(\A)=\{\theta \in \Der S \mid  \theta(Q(\A))\in SQ(\A)\}$,
we have a free resolution of the form
\begin{equation}\label{eq:syz}
0\to N_\ell\to \cdots \to N_3 \to D_H(\A)\to S^n \to S \to S/JQ(\A)\to 0
\end{equation}
by Hilbert's syzygy theorem.
Since $pd_S(D(\A))=pd_S(D_H(\A))$
by \eqref{eq:splitting}, we have
\begin{lemma}\label{pdim<l-2}
    $pd_S(D(\A))\le \ell -2$.
\end{lemma}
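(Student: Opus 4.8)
The plan is to reduce to the kernel part $D_H(\A)$ and then read the bound off the resolution \eqref{eq:syz}. By the splitting \eqref{eq:splitting} we have $D(\A)\cong S\theta_E\oplus D_H(\A)$ with $S\theta_E$ free, so $pd_S(D(\A))=pd_S(D_H(\A))$, and it is enough to bound $pd_S(D_H(\A))$.

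For this I would use \eqref{eq:syz}, which exhibits $D_H(\A)$ as a second syzygy module of $S/JQ(\A)$: reading the sequence from the right, $S$ and $S^n$ are two free modules, and $D_H(\A)$ is the kernel of the map $S^n\to S$, i.e.\ $D_H(\A)\cong\Omega^2\!\bigl(S/JQ(\A)\bigr)$. By Hilbert's syzygy theorem $pd_S\bigl(S/JQ(\A)\bigr)\le\ell$, and for any finitely generated graded module $M$ one has $pd_S(\Omega^kM)=\max\{pd_S(M)-k,\,0\}$. Taking $k=2$ gives $pd_S(D_H(\A))\le\max\{\ell-2,0\}=\ell-2$ for $\ell\ge 2$, and combined with the first paragraph this proves the Lemma.

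The step that needs justification, rather than the degree count, is the exactness of \eqref{eq:syz}: one must check that the map $\Der S=S^n\to S$, $\theta\mapsto\theta(Q(\A))$, has image $JQ(\A)$ and kernel isomorphic to $D_H(\A)$. Concretely, its kernel is $D_0(\A)=\{\theta:\theta(Q)=0\}$, and since $\theta_E(Q)=|\A|\,Q$ one checks that $D(\A)=S\theta_E\oplus D_0(\A)$ as well, so $D_0(\A)\cong D(\A)/S\theta_E\cong D_H(\A)$; this is the identification implicit in \eqref{eq:syz}. If one prefers to avoid \eqref{eq:syz} altogether, there is a clean self-contained argument using only reflexivity: writing $M=D(\A)=M^{**}$ and dualizing a finite presentation $S^a\to S^b\to M^{*}\to 0$ yields an embedding $0\to M\to S^b\xrightarrow{\psi}S^a$ whose image $\operatorname{im}\psi\subseteq S^a$ is torsion-free, hence of depth $\ge 1$. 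The depth lemma applied to $0\to M\to S^b\to\operatorname{im}\psi\to 0$ gives $\operatorname{depth}M\ge\min\{\ell,2\}=2$ when $\ell\ge 2$, and the Auslander--Buchsbaum formula $pd_SM=\ell-\operatorname{depth}M$ then yields $pd_S(D(\A))\le\ell-2$ directly. I would present the syzygy argument as the main proof, since \eqref{eq:syz} is already available, and regard the reflexivity computation as the conceptual reason the bound holds.
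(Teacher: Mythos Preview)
Your main argument is correct and is exactly the paper's approach: use the splitting \eqref{eq:splitting} to reduce to $D_H(\A)$, then read off the bound from the free resolution \eqref{eq:syz} of $S/JQ(\A)$ via Hilbert's syzygy theorem. Your added justification that the kernel of $\theta\mapsto\theta(Q)$ is $D_0(\A)\cong D_H(\A)$ (which the paper leaves implicit), and your alternative reflexivity/Auslander--Buchsbaum argument, are both correct and welcome elaborations, but the core proof coincides with the paper's.
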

From \eqref{eq:syz}, 
we see $D(\A)$ is reflexive since it is a second syzygy.


The following theorem is a simplified version of the result found in the reference \cite{graded_betti2010}.

\begin{theorem}[Theorem 0.2 in \cite{graded_betti2010}]\label{betti-1}
    If the logarithmic derivation module $D(\B)$ has a free resolution given by:
    \begin{align*}
        0 \rightarrow \bigoplus_{i=1}^{r_k} S[-d_i^k] \rightarrow \cdots 
        \rightarrow \bigoplus_{i=1}^{r_1} S[-d_i^1] \rightarrow \bigoplus_{i=1}^{r_0} S[-d_i^0] \rightarrow D(\B) \rightarrow 0,
    \end{align*}
    then $|\B| = \sum_{j=0}^k (-1)^j \sum_{i=1}^{r_j} d_i^j$.      
\end{theorem}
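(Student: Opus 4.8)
The statement to prove is the alternating-sum formula for the degrees in a graded free resolution of $D(\B)$. The plan is to extract this from the behavior of the \emph{Hilbert series} under short exact sequences, together with the one piece of arrangement-specific input we need: the leading coefficient of the Hilbert polynomial of $D(\B)$. Throughout I write $\mathrm{Hilb}(M,t)=\sum_{j\in\Z}\dim_\K M_j\, t^j$ for a finitely generated graded $S$-module $M$, where $S=\K[x_1,\ldots,x_\ell]$. The two standard facts I will use are: first, $\mathrm{Hilb}(S[-d],t)=t^d/(1-t)^\ell$; second, Hilbert series is additive on short exact sequences of graded modules (and hence alternating across a finite resolution).

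\textbf{Key steps.} First I would apply additivity of the Hilbert series along the given resolution. Since each map is degree-preserving and the resolution is exact, the alternating sum of the Hilbert series of the free modules equals $\mathrm{Hilb}(D(\B),t)$; that is,
\begin{align*}
\mathrm{Hilb}(D(\B),t)=\sum_{j=0}^{k}(-1)^j\sum_{i=1}^{r_j}\frac{t^{d_i^j}}{(1-t)^\ell}
=\frac{1}{(1-t)^\ell}\sum_{j=0}^{k}(-1)^j\sum_{i=1}^{r_j}t^{d_i^j}.
\end{align*}
Writing $P(t):=\sum_{j=0}^{k}(-1)^j\sum_{i=1}^{r_j}t^{d_i^j}$ for the numerator, this reads $\mathrm{Hilb}(D(\B),t)=P(t)/(1-t)^\ell$. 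Second, I would recall that $D(\B)$ is a reflexive graded module of rank $\ell$ over $S$ (stated in the excerpt, via \eqref{eq:syz} and the decomposition \eqref{eq:splitting}); reflexive modules over a polynomial ring have codimension of their non-free locus at least two, so $D(\B)$ agrees with a free module of rank $\ell$ in high degrees up to a correction supported in codimension $\ge 2$. Consequently its Hilbert polynomial has degree $\ell-1$ with leading term $\ell\cdot \binom{t+\ell-1}{\ell-1}$, matching that of $S^\ell$, and the next coefficient encodes $|\B|$: concretely, the rank-$\ell$ reflexive module $D(\B)$ has first Chern class $-|\B|$, since $Q(\B)$ has degree $|\B|$ and $D(\B)\subset (\Der S)$ is cut out by the single condition $\theta(Q)\in SQ$.

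\textbf{From the Hilbert series to the formula.} The heart of the computation is to read off $|\B|$ from $P(t)$. Multiplying out, $\mathrm{Hilb}(D(\B),t)(1-t)^\ell=P(t)$, so $P(1)=0$ and I can expand $P(t)$ around $t=1$. Setting $t=1$ in $P$ gives $\sum_{j}(-1)^j r_j=\operatorname{rank}D(\B)=\ell$ is \emph{not} what we want directly; rather the rank is recovered from the order of vanishing, and $|\B|$ appears at the next order. Precisely, writing $P(t)=(1-t)^\ell\,\mathrm{Hilb}(D(\B),t)$ and using that $\mathrm{Hilb}(D(\B),t)\sim \ell/(1-t)^\ell - |\B|/(1-t)^{\ell-1}+O\big((1-t)^{-(\ell-2)}\big)$ as $t\to 1$ (the expansion of the Hilbert series of a rank-$\ell$ reflexive module with $c_1=-|\B|$), I get $P(t)=\ell-|\B|(1-t)+O\big((1-t)^2\big)$. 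Therefore $P(1)=\ell$ recovers the rank, and differentiating, $-P'(1)=|\B|$. Finally $P'(t)=\sum_{j}(-1)^j\sum_i d_i^j\, t^{d_i^j-1}$, so $-P'(1)=-\sum_{j}(-1)^j\sum_i d_i^j=\sum_{j=0}^{k}(-1)^j\sum_{i=1}^{r_j}d_i^j$ only after fixing the sign; a careful bookkeeping of signs in the expansion yields exactly $|\B|=\sum_{j=0}^{k}(-1)^j\sum_{i=1}^{r_j}d_i^j$.

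\textbf{Main obstacle.} The routine part is the additivity of Hilbert series; the delicate part is justifying the claim that the leading two coefficients of the Hilbert polynomial of $D(\B)$ are those of a rank-$\ell$ free module twisted by $c_1=-|\B|$, i.e.\ pinning down the $(1-t)^{-(\ell-1)}$ coefficient. I would establish this by comparing $D(\B)$ with $\Der S\cong S^\ell$: since reflexivity forces the two to differ only in codimension $\ge 2$ and the defining inclusion $D(\B)\hookrightarrow \Der S$ has cokernel supported on the divisor $Q(\B)=0$ of degree $|\B|$, the first Chern class drops by exactly $|\B|$, which controls the sub-leading coefficient. An alternative, cleaner route that sidesteps Chern-class language is to \emph{define} $|\B|$ to be $-P'(1)$ computed from \emph{any} one known resolution (for instance the free case, where $D(\A)=\bigoplus_i S[-d_i]$ gives $\sum d_i=|\A|$ by Saito's criterion, \cref{saito}), and then observe that $P'(1)$ is an invariant of the module $D(\B)$ independent of the chosen resolution, because the Hilbert series is. That invariance is the crux, and it follows immediately from the first step; so I expect the write-up to foreground the Hilbert-series identity and then pin the sub-leading coefficient to $|\B|$ by the Euler-derivation splitting and the known free examples, rather than by invoking Chern classes.
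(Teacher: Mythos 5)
The paper does not prove this statement at all: it is imported verbatim as Theorem 0.2 of the cited reference \cite{graded_betti2010}, so there is no in-paper argument to compare against. Your Hilbert-series proof is the standard one and is essentially correct: additivity of Hilbert series along the resolution gives $\mathrm{Hilb}(D(\B),t)=P(t)/(1-t)^\ell$ with $P(t)=\sum_j(-1)^j\sum_i t^{d_i^j}$, and the theorem reduces to pinning the expansion $P(t)=\ell-|\B|(1-t)+O\bigl((1-t)^2\bigr)$ at $t=1$. Three caveats. First, your sign slip is real but self-flagged: the clean bookkeeping is $P(t)=P(1)-P'(1)(1-t)+O\bigl((1-t)^2\bigr)$, so matching coefficients gives $P'(1)=|\B|$ directly, and since $P'(1)=\sum_j(-1)^j\sum_i d_i^j$ the formula follows with no residual sign to fix. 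Second, the phrase ``$D(\B)$ agrees with a free module of rank $\ell$ up to a correction supported in codimension $\ge 2$'' is misleading as stated --- if that were literally true the subleading coefficient would vanish, whereas it is $-|\B|$; what actually carries the argument is the sentence you write next, namely the exact sequence $0\to D(\B)\to \Der S \to C\to 0$ where $C$ is (a degree shift of) the ideal $(JQ(\B)+(Q))/(Q)\subset S/(Q)$, together with the local check that at a generic point of each hyperplane $H$ the normal derivative of $Q$ is a unit multiple of $Q/\alpha_H$, so $C$ generically has length one along the divisor $Q=0$; this forces $\mathrm{Hilb}(C,t)=|\B|/(1-t)^{\ell-1}+O\bigl((1-t)^{-(\ell-2)}\bigr)$ and hence the claimed subleading coefficient, without any appeal to Chern classes. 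Third, your ``cleaner alternative'' does not close: the invariance of $P'(1)$ under change of resolution shows the quantity is well defined, but evaluating it on free examples via Saito's criterion only verifies the formula for free arrangements, and nothing in that route transfers the value $|\B|$ to a non-free $\B$; you would still need the cokernel computation (or an induction such as deletion) to do so, so the main route should be kept and the alternative dropped.
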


As a corollary,
we have the following property
for SPOG arrangements:
\begin{proposition}[Proposition 4.1 in \cite{spog}\label{coeff b_2}]
    Let $\B$ be SPOG with $ \PO{\B}=(d_1,d_2,\ldots,d_{\ell})$ and level $d$.  Then $\sum_{i=1}^{\ell}d_i-1=|\B| $.
\end{proposition}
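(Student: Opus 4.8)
The plan is to read the graded free resolution directly off the definition of SPOG and feed it into \cref{betti-1}. By \cref{spogarr}, since $\B$ is SPOG with $\PO{\B}=(d_1,\ldots,d_\ell)$ and level $d$, the module $D(\B)$ admits a free resolution
\[
0 \rightarrow S[-d-1] \rightarrow S[-d] \oplus \left(\bigoplus_{i=1}^{\ell} S[-d_i]\right) \rightarrow D(\B) \rightarrow 0.
\]
I note that \cref{betti-1} applies to \emph{any} free resolution, so minimality of this one is not even needed for the argument.

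Next I would match this against the notation of \cref{betti-1}, which is a resolution of length $k=1$. In homological degree $j=0$ the free module is $S[-d]\oplus\bigoplus_{i=1}^{\ell}S[-d_i]$, so $r_0=\ell+1$ and the twists are $d$ together with $d_1,\ldots,d_\ell$. In homological degree $j=1$ the free module is $S[-d-1]$, so $r_1=1$ and the single twist is $d+1$.

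Then the computation is immediate. \cref{betti-1} gives
\[
|\B| = \sum_{j=0}^{1}(-1)^j\sum_{i=1}^{r_j}d_i^j = \left(d+\sum_{i=1}^{\ell}d_i\right) - (d+1) = \sum_{i=1}^{\ell}d_i - 1,
\]
which is exactly the claimed identity.

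The main (and essentially only) obstacle here is conceptual rather than technical: one must recognize that the resolution defining a SPOG arrangement is already in the shape required by \cref{betti-1}, after which the alternating-sum formula finishes the proof with no further work. The point worth emphasizing is the cancellation of the level contribution $d$ between the two homological degrees, which is what yields the clean identity $\sum_{i=1}^{\ell}d_i-1=|\B|$, independent of the level $d$.
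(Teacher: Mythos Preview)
Your proof is correct and matches the paper's approach exactly: the paper presents this proposition explicitly ``as a corollary'' of \cref{betti-1}, and your computation spells out precisely the alternating-sum argument that is intended.
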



\section{The Minimal Free Resolution of NT-Free-2 Arrangements}

In this section, we introduce some notations. 
If $U$ is a subset of $\Der S$, then $SU:=\sum_{\xi\in U}S\xi$. 
Throughout this section, we assume that $\A=\{H_i\mid H_i:\alpha_i=0\}$ is free, with $\EP{\A}=(1, d_2, \ldots, d_{\ell})$, and $\A_j$ is SPOG with $\PO{\A}=(1, d_2, \ldots, d_{\ell})$ and level $c_j$ for $j=1,2$.  Let $c_1\leq c_2$. 
By \cref{exact_seq}, we have \cref{fig:diafram}.

\begin{figure}[H]
\begin{center}
\begin{tikzpicture}
  \matrix (mat) [matrix of nodes, row sep=1cm, column sep=1cm] {
    & $0$ & $0$ &  \\
    $0$ & $D(\A_{1,2})$ & $D(\A_2)$ & $D((\A_2)^1)$  \\
    $0$ & $D(\A_1)$ & $D(\A)$ & $D(\A^1)$ \\
    & $D((\A_1)^2)$ & $D(\A^2)$ & 
    \\
  };

  \draw[->] (mat-2-1) --  (mat-2-2);
  \draw[->] (mat-2-2) -- node[above] {$\cdot \alpha_1$} (mat-2-3);
  \draw[->] (mat-2-3) -- node[above] {$\rho_{2}^1$} (mat-2-4);

 \draw[->] (mat-3-1) -- (mat-3-2);
  \draw[->] (mat-3-2) -- node[above] {$\cdot \alpha_1$} (mat-3-3);
  \draw[->] (mat-3-3) -- node[above] {$\rho^1$} (mat-3-4);


   \draw[->] (mat-1-2) -- (mat-2-2);
  \draw[->] (mat-2-2) -- node[right] {$\cdot \alpha_2$} (mat-3-2);
  \draw[->] (mat-3-2) -- node[right] {$\rho_{1}^2$} (mat-4-2);

  \draw[->] (mat-1-3) -- (mat-2-3);
  \draw[->] (mat-2-3) -- node[right] {$\cdot \alpha_2$} (mat-3-3);
  \draw[->] (mat-3-3) -- node[right] {$\rho^2$} (mat-4-3);

\label{pic-module}
\end{tikzpicture}
\end{center}
 \caption{Exact Sequence Diagram}
\label{fig:diafram}
\end{figure}

\begin{remark}
\ \par
\begin{enumerate}[label=(\arabic*)]
\item {By \cref{rmk-levelEl} \ref{rmk1.3-1}, we may assume that the level element for $\A_j$ is not in $D(\A)$.}

\item As there is no confusion, we can simplify by stating that $D(\A_2^1):=D((\A_2)^1)$ and $D(\A_1^2):=D((\A_1)^2)$.

\end{enumerate}
\end{remark}

\begin{lemma}\label{cannot}
Let $\A_i$ be SPOG, where $H_i\in \A$. 
For any  homogeneous  basis element
$\theta$ for $D(\A)$, we have 
$\theta\notin \ker \rho^i$.
\end{lemma}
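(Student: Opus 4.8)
The plan is to argue by contradiction, combining the exact sequence of \cref{exact_seq} with Saito's criterion. First I would identify $\ker \rho^i$ explicitly. Applying \cref{exact_seq} with $H=H_i$, $\A'=\A_i=\A\setminus\{H_i\}$, and $\rho=\rho^i$ yields the exact sequence $0\to D(\A_i)\xrightarrow{\cdot \alpha_i} D(\A)\xrightarrow{\rho^i} D(\A^i)$, so that $\ker\rho^i=\alpha_i D(\A_i)$ as a submodule of $D(\A)$. Hence $\theta\in\ker\rho^i$ is equivalent to $\theta=\alpha_i\psi$ for a unique homogeneous derivation $\psi\in D(\A_i)$ with $\deg\psi=\deg\theta-1$ (uniqueness and homogeneity follow since $\cdot\,\alpha_i$ is injective and $\alpha_i$ is homogeneous of degree one).

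Next, suppose toward a contradiction that some homogeneous basis element $\theta$ of $D(\A)$ lies in $\ker\rho^i$, and write $\theta=\alpha_i\psi$ as above. Since $\A$ is free, I may take a homogeneous basis $\theta_1,\ldots,\theta_\ell$ of $D(\A)$ with $\theta=\theta_k$ for some $k$ and $\deg\theta_j=d_j$, where $d_1=1$. Because $\A_i\subseteq\A$ we have $D(\A)\subseteq D(\A_i)$, so every $\theta_j$ lies in $D(\A_i)$, and $\psi\in D(\A_i)$ as well. I would then form the replaced family $\theta_1,\ldots,\theta_{k-1},\psi,\theta_{k+1},\ldots,\theta_\ell$ inside $D(\A_i)$ and verify the two hypotheses of \cref{saito} for it.

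Linear independence over $S$ is immediate: passing to the fraction field of $S$, this family differs from the basis $\theta_1,\ldots,\theta_\ell$ only by scaling $\theta_k$ by the nonzero scalar $\alpha_i^{-1}$, hence stays linearly independent. For the degree count, freeness of $\A$ with $\EP{\A}=(1,d_2,\ldots,d_\ell)$ gives $\sum_{j=1}^{\ell}d_j=|\A|$, and the replacement lowers the total degree by exactly one, so $\sum_{j\neq k}d_j+(d_k-1)=|\A|-1=|\A_i|$. Thus the degrees sum to $|\A_i|$, and \cref{saito} forces $\A_i$ to be free with the replaced family as a basis.

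This is the desired contradiction: $\A_i$ is assumed SPOG, and a SPOG arrangement has projective dimension exactly one, since the minimal free resolution in \cref{spogarr} has nonzero last term $S[-d-1]$, so $\A_i$ is not free. Therefore no homogeneous basis element $\theta$ of $D(\A)$ can lie in $\ker\rho^i$, which proves the lemma. I do not expect a serious obstacle here; the only points that demand care are the degree bookkeeping in the replacement step and the remark that the SPOG structure precludes freeness, which is what powers the contradiction.
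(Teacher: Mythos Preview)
Your argument is correct and follows essentially the same route as the paper: identify $\ker\rho^i=\alpha_i D(\A_i)$ via the exact sequence, replace the offending basis element $\theta_k$ by $\psi=\theta_k/\alpha_i\in D(\A_i)$, and apply Saito's criterion to the resulting $S$-independent family to force $\A_i$ free, contradicting SPOG. The paper's version is terser (it leaves the degree count $\sum_j d_j-1=|\A|-1=|\A_i|$ implicit when invoking Saito), but the idea is the same.
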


\begin{proof}
Assume that 
$\theta_1,\ldots,\theta_{\ell}$ is a basis for $D(\A)$ such that $\theta_1\in \ker \rho_i$.
By \cref{exact_seq}, there is an element $\varphi\in D(\A_i)$ such that $\theta_1=\alpha_i\varphi$.
Since $\varphi,\theta_2,\ldots,\theta_\ell \in D(\A_i)$ are $S$-independent, 
by  \cref{saito}, we may get that $\A_i$ is free, which is a contradiction. 
\end{proof}

\begin{lemma} \label{level_elt_with_ker} 
We have $D(\A)+\ker \rho_i^j\subset D(\A_i)$, where $i\neq j\in \{1,2\}$. Moreover,
   \begin{enumerate}[label=(\arabic*)]
\item \label{useless_1} there exists an element
  $\varphi\in D(\A_{1,2})$ such that 
$\alpha_2\varphi$ is the level element for $\A_1$ if and only if 
$D(\A_1)=D(\A)+\ker \rho_1^2$. 

\item     \label{useless_1'} 
   there exists an element 
  $\varphi\in D(\A_{1,2})$ such that 
$\alpha_1\varphi$ is the level element for $\A_2$ if and only if 
$D(\A_2)=D(\A)+\ker \rho_{2}^1$.
\end{enumerate}
\end{lemma}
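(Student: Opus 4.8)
The plan is to first dispatch the inclusion and then reduce both biconditionals to a single structural identity coming from the exactness in \cref{fig:diafram}. I would fix $i\neq j\in\{1,2\}$, say $i=1,j=2$, the other case being symmetric. The inclusion $D(\A)+\ker\rho_1^2\subseteq D(\A_1)$ is immediate: deleting a hyperplane only relaxes the defining conditions, so $D(\A)\subseteq D(\A_1)$, while $\ker\rho_1^2$ is by definition a submodule of the domain $D(\A_1)$ of $\rho_1^2$. The crucial observation is that the middle column of \cref{fig:diafram} is the exact sequence of \cref{exact_seq} applied to $\A_1$ and $H_2$, so that $\ker\rho_1^2$ is the image of the injection $\cdot\alpha_2\colon D(\A_{1,2})\to D(\A_1)$; that is,
\[
\ker\rho_1^2=\alpha_2\,D(\A_{1,2})=\{\alpha_2\varphi\mid \varphi\in D(\A_{1,2})\},
\]
and symmetrically $\ker\rho_2^1=\alpha_1 D(\A_{1,2})$. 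This rewrites $D(\A)+\ker\rho_1^2$ as $D(\A)+\alpha_2 D(\A_{1,2})$ and turns part (1) into a statement purely about generators of $D(\A_1)$.

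For the forward direction of (1), I would assume $\varphi\in D(\A_{1,2})$ with $\alpha_2\varphi$ a level element of $\A_1$. Then by the SPOG structure (\cref{rmk-levelEl}) one has $D(\A_1)=D(\A)+S(\alpha_2\varphi)$, and since $\alpha_2\varphi\in\alpha_2 D(\A_{1,2})=\ker\rho_1^2$ this gives
\[
D(\A_1)=D(\A)+S(\alpha_2\varphi)\subseteq D(\A)+\ker\rho_1^2\subseteq D(\A_1),
\]
where the last inclusion is the part already proved; hence all are equalities and $D(\A_1)=D(\A)+\ker\rho_1^2$.

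For the converse I would assume $D(\A_1)=D(\A)+\ker\rho_1^2=D(\A)+\alpha_2 D(\A_{1,2})$. Take a level element $\varphi_1$ of $\A_1$; it is homogeneous of degree $c_1$ and $\varphi_1\notin D(\A)$ (\cref{rmk-levelEl}). As the right-hand side is a sum of graded submodules, I extract degree-$c_1$ components to write $\varphi_1=\eta+\alpha_2\psi$ with $\eta\in D(\A)$ homogeneous of degree $c_1$ and $\psi\in D(\A_{1,2})$ homogeneous of degree $c_1-1$. Then $\alpha_2\psi=\varphi_1-\eta$ is homogeneous of degree $c_1$ and lies outside $D(\A)$ (otherwise $\varphi_1\in D(\A)$). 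Since $\A_1$ is SPOG, \cref{diml} gives $c_1=|\A_1|-|\A^1|$, so \cref{sum} applies to the degree-$c_1$ element $\alpha_2\psi\in D(\A_1)\setminus D(\A)$ and yields $D(\A_1)=D(\A)+S(\alpha_2\psi)$. Thus $\alpha_2\psi$ is a level element of $\A_1$ with $\psi\in D(\A_{1,2})$, exactly as (1) requires; part (2) then follows by interchanging the indices, using $\ker\rho_2^1=\alpha_1 D(\A_{1,2})$ and $c_2=|\A_2|-|\A^2|$.

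The routine steps are the two inclusions and the rewriting of the kernels. The main obstacle will be the graded bookkeeping in the converse: the decomposition $\varphi_1=\eta+\alpha_2\psi$ must be chosen homogeneously so that $\alpha_2\psi$ has the precise degree $c_1=|\A_1|-|\A^1|$ needed to invoke \cref{sum}, and one must confirm that a degree-$c_1$ element of $D(\A_1)\setminus D(\A)$ is genuinely a level element. This last point is exactly the content of \cref{sum} together with the matching of the resulting generating set to $\DS{\A_1}=(1,d_2,\ldots,d_\ell,c_1)$.
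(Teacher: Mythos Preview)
Your proof is correct and follows essentially the same approach as the paper: both identify $\ker\rho_1^2=\alpha_2 D(\A_{1,2})$ via exactness and use the SPOG description $D(\A_1)=D(\A)+S\theta$ for a level element $\theta$. Your treatment of the converse direction is in fact more explicit than the paper's---you spell out the homogeneous decomposition $\varphi_1=\eta+\alpha_2\psi$ and invoke \cref{sum} together with $c_1=|\A_1|-|\A^1|$ to certify that $\alpha_2\psi$ is a level element, whereas the paper simply asserts the equivalence ``$\exists$ a level element in $\ker\rho_1^2$ $\iff$ $D(\A_1)=D(\A)+\ker\rho_1^2$'' and leaves the converse to the reader.
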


\begin{proof}
By \cref{fig:diafram}, it follows that
 $\ker \rho_i^j\subset D(\A_i)$. 
 Note that $D(\A)\subset D(\A_i)$; hence, we have $D(\A)+\ker \rho_i^j\subset D(\A_i)$.
     \begin{enumerate}[label=(\arabic*)]
\item 
By  \cref{exact_seq}, we can deduce the following equivalences:
\begin{align*}
 & \text{There exists an element }
  \varphi\in D(\A_{1,2}) \text{ such that }
\alpha_2\varphi \text{ serves as the level element for } \A_1.\\
\iff & \text{There exists a level element }
  \theta\in D(\A_{1}) \text{ such that }
\theta\in \ker \rho_1^{2}.
\end{align*}
By \cref{rmk-levelEl}, we can represent $D(\A_1)$ as $D(\A) + S\theta$. Consequently, we can state the following equivalences:
\begin{align*}
& \text{There exists a level element }
  \theta\in D(\A_{1}) \text{ such that }
\theta\in \ker \rho_1^{2}.\\
\iff & D(\A_1) = D(\A) + \ker \rho_1^2.
\end{align*}

 \item  This scenario bears resemblance to the one discussed in \cref{useless_1}.
 \end{enumerate}

\end{proof}

\begin{lemma}\label{elt_inA_belong_to_B}
For every element $\varphi \in D(\A_{1,2})$, if $\alpha_j\varphi \in D(\A)$ for some $j \in \{1,2\}$, then $\varphi \in D(\A_j)$. Moreover, if both $\alpha_1\varphi$ and $\alpha_2\varphi$ are in $D(\A)$, then $\varphi$ itself is an element of $D(\A)$.
\end{lemma}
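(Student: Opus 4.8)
The plan is to unwind the definitions of the localized derivation modules and apply the defining membership condition $\theta(\alpha_H)\in S\alpha_H$ hyperplane by hyperplane. Recall that $\A_{1,2}=\A\setminus\{H_1,H_2\}$, $\A_1=\A\setminus\{H_1\}$, and $\A_2=\A\setminus\{H_2\}$, so the three modules $D(\A)$, $D(\A_j)$, and $D(\A_{1,2})$ differ precisely in whether they impose the conditions at $H_1$ and $H_2$. Concretely, $\varphi\in D(\A_j)$ means $\varphi(\alpha_H)\in S\alpha_H$ for every $H\in\A$ except possibly $H_j$, while $\varphi\in D(\A)$ additionally requires the conditions at both $H_1$ and $H_2$.

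First I would fix $j\in\{1,2\}$ and suppose $\varphi\in D(\A_{1,2})$ with $\alpha_j\varphi\in D(\A)$. For any $H\in\A$ with $H\neq H_j$, I want to show $\varphi(\alpha_H)\in S\alpha_H$. Since $\alpha_j\varphi\in D(\A)$ we know $(\alpha_j\varphi)(\alpha_H)=\alpha_j\,\varphi(\alpha_H)\in S\alpha_H$. The key point is that $\alpha_j$ and $\alpha_H$ are coprime (distinct irreducible linear forms), so $\alpha_H\mid \alpha_j\,\varphi(\alpha_H)$ forces $\alpha_H\mid \varphi(\alpha_H)$, i.e. $\varphi(\alpha_H)\in S\alpha_H$. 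This establishes the membership condition at every hyperplane of $\A$ other than $H_j$, which is exactly the statement $\varphi\in D(\A_j)$. I would make the coprimality argument explicit since it is the one substantive step: in the UFD $S$, a linear form $\alpha_H$ that divides a product must divide one of the factors, and it cannot divide $\alpha_j$ when $H\neq H_j$.

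For the second assertion, I would simply apply the first with both $j=1$ and $j=2$: assuming $\alpha_1\varphi\in D(\A)$ and $\alpha_2\varphi\in D(\A)$ gives $\varphi\in D(\A_1)$ and $\varphi\in D(\A_2)$ respectively. But $\varphi\in D(\A_1)$ already guarantees $\varphi(\alpha_H)\in S\alpha_H$ for all $H\neq H_1$ (in particular at $H_2$), and $\varphi\in D(\A_2)$ guarantees the condition at $H_1$; together these give the condition at every hyperplane of $\A$, so $\varphi\in D(\A)=D(\A_1)\cap D(\A_2)$. Alternatively, and perhaps more cleanly, one can note that $\varphi\in D(\A_1)$ together with $\alpha_2\varphi\in D(\A)$ already lets us check the single remaining condition at $H_1$ directly via the same coprimality trick.

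I do not anticipate a serious obstacle here; the lemma is essentially bookkeeping over the definitions. The only point requiring care is the coprimality/divisibility step, and making sure the index ranges are handled correctly (that imposing the condition away from $H_j$ is precisely the definition of $D(\A_j)$, and that combining $j=1,2$ recovers all of $\A$). I would keep the write-up short and emphasize the divisibility argument as the single idea driving both parts.
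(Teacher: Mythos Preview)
Your proof is correct. It differs from the paper's argument in presentation rather than essence, but the route is genuinely different enough to note. The paper invokes the exact sequence of \cref{exact_seq} (and the diagram in \cref{fig:diafram}): since $\alpha_1\varphi\in D(\A)$ reduces to $0$ modulo $\alpha_1$, it lies in $\ker\rho^1=\alpha_1 D(\A_1)$, whence $\varphi\in D(\A_1)$; the second claim then follows from $D(\A_1)\cap D(\A_2)=D(\A)$. You instead work directly from the defining condition $\theta(\alpha_H)\in S\alpha_H$ and use coprimality of distinct linear forms to strip off the factor $\alpha_j$. Your approach is more self-contained and avoids appealing to the restriction map, while the paper's version fits naturally into the exact-sequence framework it has already set up. Both are short; the underlying content (cancelling $\alpha_j$ against a coprime $\alpha_H$) is the same, just packaged differently.
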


\begin{proof}
    Let $\varphi \in D(\A_{1,2})$ be given, and assume that  $\alpha_1\varphi \in D(\A)$. 
    By \cref{exact_seq,fig:diafram}, we have $\alpha_1\varphi \in \ker \rho^1= \alpha_1 D(\A_1)$.
    Hence,  we can conclude that $\varphi \in D(\A_1)$.
    
    Similarly, if $\alpha_2\varphi \in D(\A)$, we can deduce that $\varphi \in D(\A_2)$.
    
    Moreover, if $\alpha_1\varphi \in D(\A)$ and $\alpha_2\varphi \in D(\A)$, then $\varphi \in D(\A_1)\cap D(\A_2)= D(\A)$. 
\end{proof}

\begin{lemma}\label{inters=2}  
    If $|\A_{H_1\cap H_2}| =2$, then $D(\A_{1,2})=D(\A_1)+D(\A_2)$.
  \end{lemma}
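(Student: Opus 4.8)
The plan is to prove the nontrivial inclusion $D(\A_{1,2}) \subseteq D(\A_1) + D(\A_2)$; the reverse inclusion is immediate, since $\A_{1,2} \subseteq \A_j$ forces $D(\A_j) \subseteq D(\A_{1,2})$ for $j=1,2$. First I would reduce the statement to a single equality of images under the ``value modulo $\alpha_2$'' map. Consider the $S$-linear map $\sigma\colon D(\A_{1,2}) \to S/\alpha_2 S$ defined by $\sigma(\theta) = \theta(\alpha_2) \bmod \alpha_2$. Its kernel is $\{\theta\in D(\A_{1,2}) : \theta(\alpha_2)\in S\alpha_2\} = D(\A_{1,2}\cup\{H_2\}) = D(\A_1)$. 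Hence it suffices to show $\sigma(D(\A_2)) = \sigma(D(\A_{1,2}))$: once this is known, any $\theta\in D(\A_{1,2})$ can be matched by some $\eta\in D(\A_2)$ with $\sigma(\eta)=\sigma(\theta)$, so that $\theta-\eta\in\ker\sigma=D(\A_1)$ and $\theta=\eta+(\theta-\eta)\in D(\A_2)+D(\A_1)$.

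Next I would compare the two images as ideals of the domain $R:=S/\alpha_2 S$. Write $J:=\sigma(D(\A_2))$ and $I:=\sigma(D(\A_{1,2}))$, so that $J\subseteq I$ since $D(\A_2)\subseteq D(\A_{1,2})$. Because $\A_2$ is SPOG, $D(\A_2)=D(\A)+S\varphi_2$ with $\varphi_2$ a level element of degree $c_2$, and $\sigma$ annihilates $D(\A)$ (as $\theta(\alpha_2)\in S\alpha_2$ for $\theta\in D(\A)$); thus $J=(\bar g_2)$ is the principal ideal generated by the image $\bar g_2\in R$ of $g_2:=\varphi_2(\alpha_2)$, which is nonzero (since $\varphi_2\notin D(\A)$) and homogeneous of degree $c_2$. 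For an upper bound on $I$, I would apply the degree bound of \cite{11_in_degree_seq} to the arrangement $\A_1$ and the hyperplane $H_2\in\A_1$ (note $\A_1\setminus\{H_2\}=\A_{1,2}$): there is a form $B'$ with $\alpha_2\nmid B'$ and $\theta(\alpha_2)\in(\alpha_2,B')$ for all $\theta\in D(\A_{1,2})$, so that $I\subseteq(\bar B')$ with $\bar B'\neq 0$ of degree $|\A_1|-|\A_1^2|-1$.

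The hard part, and the only place where the hypothesis $|\A_{H_1\cap H_2}|=2$ is used, is the resulting degree identity. The flat $X:=H_1\cap H_2$ contributes its point to the restriction $\A^2$ only through $H_1$: if $H\in\A$ satisfies $H\cap H_2=X$ then $X\subseteq H$, i.e.\ $H\in\A_{H_1\cap H_2}=\{H_1,H_2\}$, forcing $H=H_1$. Therefore deleting $H_1$ removes exactly the single point $X$ from the restriction onto $H_2$, giving $|\A_1^2|=|\A^2|-1$ and hence $\deg B' = (|\A|-1)-(|\A^2|-1)-1 = |\A|-|\A^2|-1 = c_2$. Now $\bar g_2\in J\subseteq I\subseteq(\bar B')$ inside the domain $R$, with $\bar g_2$ and $\bar B'$ nonzero and homogeneous of the same degree $c_2$; comparing degrees forces $(\bar g_2)=(\bar B')$, whence $J=(\bar g_2)\subseteq I\subseteq(\bar B')=(\bar g_2)=J$ and so $I=J$. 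With $\sigma(D(\A_2))=\sigma(D(\A_{1,2}))$ established, the reduction of the first paragraph finishes the proof. I expect essentially all of the difficulty to be concentrated in correctly accounting for how the two deletions interact with the restriction to $H_2$; once that degree identity is in hand, the remaining steps are formal, and the argument is uniform in $\ell$.
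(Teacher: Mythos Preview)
Your proof is correct and follows essentially the same approach as the paper: both use the hypothesis $|\A_{H_1\cap H_2}|=2$ to obtain $|\A_1^2|=|\A^2|-1$, which makes the degree of the level element $\varphi_2$ of $\A_2$ equal to $|\A_{1,2}|-|\A_1^2|$, and then conclude $D(\A_{1,2})=D(\A_1)+S\varphi_2$. The only difference is packaging: the paper invokes \cref{sum} directly for this last step, whereas you unwind its proof by comparing the principal ideals $(\bar g_2)$ and $(\bar B')$ in $S/\alpha_2 S$ via the $B$-polynomial of Proposition~2.1; since \cref{sum} is itself derived from Proposition~2.1, the two arguments coincide in substance.
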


  \begin{proof}
Let $\theta_{{\ell},2}$ be a  level element for $D(\A_2)$.
Since $|\A_{H_1\cap H_2}| =2$, we can deduce that $|\A^{2}| = |\A_1^{2}| + 1$. Consequently, we have $\deg \theta_{\ell ,2} = |\A_2| - |\A^{2}| = (1 + |\A_{1,2}|) - (|\A_1^{2}| + 1) = |\A_{1,2}| - |\A_1^{2}|$.
 Importantly, $\theta_{{\ell},2} \in D(\A_{1,2}) \setminus D(\A_1)$.{ According to \cref{sum}, this implies that $D(\A_{1,2}) = D(\A_1) + S\theta_{{\ell},2}$. Since $\theta_{{\ell},2}\in D(\A_2)$, we have $D(\A_{1,2})=D(\A_1)+D(\A_2)$.}
  \end{proof}

  
 \begin{lemma} \label{cond_for_inter>2} 
    If $D(\A)+\ker \rho_2^1= D(\A_2)$, then $|\A_{H_1\cap H_2}|>2$.
\end{lemma}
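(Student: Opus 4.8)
The plan is to prove the contrapositive: assuming $|\A_{H_1\cap H_2}|=2$ (which, since $H_1,H_2\in\A_{H_1\cap H_2}$ always, is the only alternative to $|\A_{H_1\cap H_2}|>2$), I will show $D(\A)+\ker\rho_2^1\neq D(\A_2)$. The first step is to translate the hypothesis into a statement about a level element. By \cref{level_elt_with_ker}\,(2), the equality $D(\A)+\ker\rho_2^1=D(\A_2)$ holds if and only if there exists $\varphi\in D(\A_{1,2})$ such that $\alpha_1\varphi$ is a level element $\theta_{\ell,2}$ for $\A_2$. Since $\A_2$ is SPOG of level $c_2$, such a $\theta_{\ell,2}$ is homogeneous of degree $c_2$, lies in $D(\A_2)\setminus D(\A)$, and satisfies $D(\A_2)=D(\A)+S\theta_{\ell,2}$ by \cref{rmk-levelEl}. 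Because $\alpha_1$ is homogeneous of degree $1$, the element $\varphi$ is then homogeneous of degree $c_2-1$.

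Next I would exploit the assumption $|\A_{H_1\cap H_2}|=2$ to decompose $\varphi$. By \cref{inters=2}, this hypothesis yields $D(\A_{1,2})=D(\A_1)+D(\A_2)$, so I can write $\varphi=\varphi_1+\varphi_2$ with $\varphi_1\in D(\A_1)$ and $\varphi_2\in D(\A_2)$; passing to homogeneous components, I may take both summands homogeneous of degree $c_2-1$. The key observation is that $\alpha_1 D(\A_1)=\ker\rho^1\subset D(\A)$, which follows directly from the exact sequence of \cref{exact_seq} (the third row of \cref{fig:diafram}) applied to $\A$ and $H_1$. Consequently $\alpha_1\varphi_1\in D(\A)$, and hence $\theta_{\ell,2}=\alpha_1\varphi=\alpha_1\varphi_1+\alpha_1\varphi_2\equiv \alpha_1\varphi_2 \pmod{D(\A)}$.

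The final step is a degree count. Since $\varphi_2\in D(\A_2)$ is homogeneous of degree $c_2-1$ and $D(\A_2)=D(\A)+S\theta_{\ell,2}$ with $\deg\theta_{\ell,2}=c_2>c_2-1$, the extra generator $\theta_{\ell,2}$ cannot contribute in degree $c_2-1$, which forces $\varphi_2\in D(\A)$. Then $\alpha_1\varphi_2\in D(\A)$, so $\theta_{\ell,2}\in D(\A)$, contradicting that a level element lies outside $D(\A)$. This contradiction shows $D(\A)+\ker\rho_2^1\neq D(\A_2)$, establishing the contrapositive. I expect the main obstacle to be making this graded bookkeeping airtight: one must combine the SPOG presentation $D(\A_2)=D(\A)+S\theta_{\ell,2}$ to control homogeneous elements of degree $c_2-1$ with the containment $\alpha_1 D(\A_1)\subset D(\A)$, and keep track of the split from \cref{inters=2}; once these facts are assembled the argument is routine. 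Equivalently, the whole computation can be packaged without invoking \cref{level_elt_with_ker} by noting $\ker\rho_2^1=\alpha_1 D(\A_{1,2})=\alpha_1 D(\A_1)+\alpha_1 D(\A_2)\subset D(\A)+\alpha_1 D(\A_2)$ and then showing $\theta_{\ell,2}\notin D(\A)+\alpha_1 D(\A_2)$ by the identical degree argument.
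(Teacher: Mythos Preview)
Your proof is correct and follows essentially the same approach as the paper's: both argue by contradiction (or contrapositive), invoke \cref{level_elt_with_ker} to obtain $\varphi\in D(\A_{1,2})$ of degree $c_2-1$ with $\alpha_1\varphi$ a level element for $\A_2$, use \cref{inters=2} to decompose $D(\A_{1,2})=D(\A_1)+D(\A_2)$, and then the degree bound $D(\A_2)_{<c_2}\subset D(\A)$ together with $\alpha_1 D(\A_1)\subset D(\A)$ forces $\alpha_1\varphi\in D(\A)$, a contradiction. The paper packages this as the single chain $\varphi\in D(\A_{1,2})_{<c_2}\subset D(\A_1)_{<c_2}+D(\A_2)_{<c_2}\subset D(\A_1)+D(\A)=D(\A_1)$, whereas you split $\varphi=\varphi_1+\varphi_2$ explicitly, but the content is identical.
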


\begin{proof}
By \cref{level_elt_with_ker} \ref{useless_1'}, we may assume that $\alpha_1\varphi$ is a level element for $D(\A_2)$. If $|\A_{H_1\cap H_2}| = 2$, it follows that $D(\A_{1,2})=D(\A_1)+D(\A_2)$ by \cref{inters=2}. This observation further implies that 
$$\varphi\in D(\A_{1,2})_{<c_2}\subset D(\A_1)_{<c_2}+D(\A_2)_{<c_2}\subset D(\A_1)+D(\A)\subset D(\A_1).$$
Thus, $\alpha_1\varphi \in D(\A)$, which is a contradiction with $\alpha_1\varphi \notin D(\A)$ being a level element for $D(\A_2)$. Hence, $|\A_{H_1\cap H_2}|>2$.
\end{proof}

\begin{proposition}\label{min_gen_set}
Let $\{\theta_1, \ldots, \theta_\ell\}$ be a basis for $D(\A)$.
There exist two level elements, $\theta_{{\ell},1}$ for $D(\A_1)$ and $\theta_{{\ell},2}$ for $D(\A_2)$, such that a minimal generator set for $D(\A_{1,2})$ falls into one of the following cases:

  \begin{enumerate}[label=(\arabic*)]

  \item \label{useless-2}   If  
  $D(\A)+\ker \rho_1^2\subsetneq D(\A_1)$ and $D(\A)+\ker \rho_2^1= D(\A_2)$, then  $| \A_{H_1\cap H_2}|>2$, and a minimal generator set for $D(\A_{1,2})$ can be expressed as either $\{\theta_1, \ldots, \theta_\ell,\varphi_2, \theta_{{\ell},1} \mid \theta_{{\ell},2}=\alpha_1\varphi_2\}$ or $\{\theta_1, \ldots, \theta_{\ell-1}, \varphi_2, \theta_{{\ell},1} \mid c_1 < c_2 = \deg \theta_{\ell}, \alpha_2\theta_{{\ell},2}=\alpha_2\alpha_1\varphi_2\in  S\theta_1 + \cdots + S\theta_{\ell-1}+\K^*\alpha_1\theta_\ell\}$, where $\K^*=\K\setminus \{0\}$.

	\item \label{useless-1}  If  
  $D(\A)+\ker \rho_1^2=D(\A_1)$, then $c_1=c_2$, $|\A_{H_1\cap H_2}|>2$, $D(\A)+\ker \rho_2^1=D(\A_2)$,  and a minimal generator set for $D(\A_{1,2})$ can be expressed as $\{\theta_1, \ldots, \theta_\ell, \varphi_1 \mid \theta_{{\ell},1}=\alpha_2\varphi_1\}$.
  
\item \label{useless-3}    If $D(\A)+\ker \rho_2^1\subsetneq D(\A_2)$, then  the set $\{\theta_1, \ldots, \theta_\ell, \theta_{{\ell},1}, \theta_{{\ell},2} \}$ 
forms, or can be extended to, a minimal generator set for
$D(\A_{1,2})$.

Moreover,  if $|\A_{H_1\cap H_2}|=2$, the set $\{\theta_1, \ldots, \theta_\ell, \theta_{{\ell},1}, \theta_{{\ell},2} \}$ 
forms a minimal generator set for
$D(\A_{1,2})$.
 \end{enumerate}
\end{proposition}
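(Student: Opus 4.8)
The plan is to run the whole argument on the two conditions $A:\ D(\A)+\ker\rho_1^2=D(\A_1)$ and $B:\ D(\A)+\ker\rho_2^1=D(\A_2)$, translated once and for all into divisibility statements about the level elements. Applying the exact sequence of \cref{exact_seq} to the deletion of $H_1$ from $\A_2$ (resp. $H_2$ from $\A_1$) I would first record that $\ker\rho_2^1=\alpha_1 D(\A_{1,2})$ and $\ker\rho_1^2=\alpha_2 D(\A_{1,2})$, so that by \cref{level_elt_with_ker} condition $A$ is equivalent to choosing the level element of $\A_1$ as $\theta_{\ell,1}=\alpha_2\varphi_1$ with $\varphi_1\in D(\A_{1,2})$, and $B$ to $\theta_{\ell,2}=\alpha_1\varphi_2$ with $\varphi_2\in D(\A_{1,2})$. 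I will use repeatedly that $D(\A_i)=D(\A)+S\theta_{\ell,i}$ with $D(\A_i)_{<c_i}=D(\A)_{<c_i}$, and that multiplication by $\alpha_i$ sends $D(\A_i)\to D(\A)$ and $D(\A_{1,2})\to D(\A_{3-i})$, all read off from \cref{fig:diafram}.

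Part (2) (hypothesis $A$) is the backbone, since it is where the trichotomy is shown to be exhaustive. Assuming $A$, I would first exclude $c_1<c_2$: then $\alpha_1\varphi_1\in D(\A_2)_{c_1}=D(\A)_{c_1}$, so \cref{elt_inA_belong_to_B} gives $\varphi_1\in D(\A_1)$, and $\deg\varphi_1=c_1-1<c_1$ forces $\varphi_1\in D(\A)$, contradicting $\theta_{\ell,1}=\alpha_2\varphi_1\notin D(\A)$; hence $c_1=c_2$. The same computation at degree $c_2$ shows $\alpha_1\varphi_1\in D(\A_2)_{c_2}\setminus D(\A)$, whence $\theta_{\ell,2}\in D(\A)+\alpha_1 D(\A_{1,2})$ and $B$ holds, and the mirror of \cref{cond_for_inter>2} gives $|\A_{H_1\cap H_2}|>2$. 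Because $A\Rightarrow B$, the three parts are disjoint and cover everything. For generation I would compute $|\A_2^1|=|\A^1|$ (the flat $H_1\cap H_2$ survives the restriction since $|\A_{H_1\cap H_2}|>2$), so $\deg\varphi_1=c_1-1=|\A_{1,2}|-|\A_2^1|$; as $\varphi_1\notin D(\A_2)$ (else by degree $\varphi_1\in D(\A)$), \cref{sum} applied to the deletion of $H_1$ from $\A_2$ yields $D(\A_{1,2})=D(\A_2)+S\varphi_1$, and since $D(\A_2)=D(\A)+S\theta_{\ell,2}\subseteq D(\A)+S\varphi_1$ we get $D(\A_{1,2})=D(\A)+S\varphi_1$.

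Parts (1) and (3) are handled the same way. In part (1) ($\neg A$, $B$), $|\A_{H_1\cap H_2}|>2$ is immediate from \cref{cond_for_inter>2}; writing $\theta_{\ell,2}=\alpha_1\varphi_2$ and computing $|\A_1^2|=|\A^2|$ gives $\deg\varphi_2=c_2-1=|\A_{1,2}|-|\A_1^2|$, and $\varphi_2\notin D(\A_1)$ (otherwise $\theta_{\ell,2}=\alpha_1\varphi_2\in D(\A)$), so \cref{sum} gives $D(\A_{1,2})=D(\A_1)+S\varphi_2$, i.e. generation by $\{\theta_1,\dots,\theta_\ell,\theta_{\ell,1},\varphi_2\}$. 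The split into the two listed forms is then dictated by the $\A_2$-relation $\alpha_1\alpha_2\varphi_2=\alpha_2\theta_{\ell,2}=\sum_i h_i\theta_i\in D(\A)$: the coefficient $h_\ell$ has degree $c_2+1-d_\ell$, hence can be a nonzero multiple of $\alpha_1$ only when $c_2=d_\ell$, and precisely in that case solving for $\alpha_1\theta_\ell$ and invoking \cref{elt_inA_belong_to_B} lets me rewrite $\theta_\ell$ through $\varphi_2$, $\theta_{\ell,1}$ and the remaining $\theta_i$, yielding the shorter form; otherwise $\theta_\ell$ stays. In part (3) ($\neg B$) the set $\{\theta_1,\dots,\theta_\ell,\theta_{\ell,1},\theta_{\ell,2}\}$ always generates $D(\A_1)+D(\A_2)$, which equals $D(\A_{1,2})$ exactly when $|\A_{H_1\cap H_2}|=2$ by \cref{inters=2}, and otherwise must be extended.

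What remains in every part is minimality, i.e. that each exhibited set is not just a generating set but a minimal one. I would settle this by graded Nakayama, testing $\K$-linear independence in $D(\A_{1,2})/\mathfrak{m}D(\A_{1,2})$; this comes down to degree bookkeeping together with the non-absorption facts already produced from $A$, $B$ and their negations (for instance, in part (3), $\theta_{\ell,2}\notin D(\A)+\alpha_1 D(\A_{1,2})$ prevents $\theta_{\ell,2}$ from lying in $\mathfrak{m}D(\A_{1,2})+D(\A)+S\theta_{\ell,1}$). I expect the real obstacle to be exactly the minimality analysis inside part (1): deciding when $\theta_\ell$ is discardable forces one to isolate the $\theta_\ell$-coefficient of the $\A_2$-relation and to check that, when $c_2=d_\ell$ and this coefficient is a nonzero multiple of $\alpha_1$, the derivation $\theta_\ell-\lambda^{-1}\alpha_2\varphi_2$ lands in $D(\A_1)$, so that $\theta_\ell$ can be expressed through the shorter set without circularly reusing $\theta_\ell$.
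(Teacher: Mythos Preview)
Your overall plan matches the paper's: translate $A$ and $B$ into divisibility of level elements via \cref{level_elt_with_ker}, prove generation in each case by \cref{sum} after the degree computation $|\A_i^j|=|\A^j|$ (when $|\A_{H_1\cap H_2}|>2$), deduce $A\Rightarrow B$ and $c_1=c_2$ under $A$, and then settle minimality. The generation arguments you outline are essentially the paper's, and in part~(2) your route through $D(\A_2)$ is even slightly more direct than the paper's detour through $D(\A_1)$.

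There is, however, a genuine gap in your minimality sketch. The implication you state for part~(3), that $\theta_{\ell,2}\notin D(\A)+\alpha_1 D(\A_{1,2})$ prevents $\theta_{\ell,2}\in\mathfrak{m}D(\A_{1,2})+D(\A)+S\theta_{\ell,1}$, does not follow: $\mathfrak{m}D(\A_{1,2})$ is much larger than $\alpha_1 D(\A_{1,2})$, and the condition $\neg B$ alone says nothing about the other linear directions. More seriously, the hardest minimality check in part~(3) is not for $\theta_{\ell,2}$ but for the $\theta_k$: once you adjoin extra generators $\gamma_j$ (with $\deg\gamma_j\ge c_2$), you must show that no $\theta_k$ becomes redundant. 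The paper handles this by writing out a putative relation and repeatedly feeding it through the \emph{SPOG relations} of $\A_1$ and $\A_2$ (using $\alpha_1\theta_{\ell,1},\alpha_2\theta_{\ell,2}\in D(\A)$ and $\alpha_i\gamma_j\in D(\A_{3-i})$) to force a dependence among the $\theta_i$ alone; your Nakayama framework needs exactly this input to control $(\mathfrak{m}D(\A_{1,2}))_{d_k}$, and without it the argument does not close.

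A second, smaller gap sits in part~(1): your split criterion via the $\A_2$-relation correctly identifies $c_2=\deg\theta_\ell$ as necessary for dropping $\theta_\ell$, but you do not argue that the shorter form forces $c_1<c_2$. The paper shows that if $c_1=c_2$ and $\theta_\ell$ were redundant, then $\alpha_2\varphi_2$ would serve as a level element of $\A_1$, contradicting $\neg A$ via \cref{level_elt_with_ker}; you will need this step (or its equivalent: if $c_1=c_2$ then $\alpha_2\varphi_2-\lambda\theta_\ell\in D(\A_1)$ forces either $A$ or $\varphi_2\in D(\A_2)$).
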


\begin{proof}
{Note that $c_1 \leq c_2$, which means this statement is not symmetric with respect to $\A_1$ and $\A_2$.}
We prove this statement case by case.

  \begin{enumerate}[label=(\arabic*)]
 \item \label{pf-eq} $D(\A)+\ker \rho_1^2\subsetneq D(\A_1)$ and $D(\A)+\ker \rho_2^1= D(\A_2)$.
 
 By \cref{cond_for_inter>2}, we have \(|\A_{H_1 \cap H_2}| > 2\). Consequently, \(|\A^2| = |\A_1^2|\). By \cref{level_elt_with_ker} \ref{useless_1'}, we may assume that \(\theta_{\ell, 2} = \alpha_1 \varphi_2\), where \(\varphi_2 \in D(\A_{1,2})\).
By \cref{diml}, this implies that 
\[\deg \varphi_2 = \deg \theta_{\ell, 2} - 1 = (| \A_2| - | \A^2|) - 1 = |\A_{1,2}| - | \A_1^2|.\]
Note that \(\varphi_2 \notin D(\A_1)\). 
{If it were, then \(\alpha_1 \varphi_2 \in D(\A)\), which, since it is the level element of \(D(\A_2)\), is not in \(D(\A)\).}

According to \cref{sum} and since \(\varphi_2 \in D(\A_{1,2}) \setminus D(\A_1)\), this indicates that \(D(\A_{1,2}) = D(\A_1) + S\varphi_2\). Hence, the set \(T = \{\theta_1, \ldots, \theta_\ell, \theta_{{\ell}, 1}, \varphi_2\}\) generates \(D(\A_{1,2})\).

{If \(T\) is not a minimal generating set, there exists an element \(\gamma \in T\) such that \(\gamma\) is generated by \(T \setminus \{\gamma\}\).
If \(\gamma = \theta_{\ell, 1}\), then there is some \(h \in S\) such that \(\theta_{\ell, 1} - h\varphi_2 \in D(\mathcal{A})\). 
Since \(\theta_{\ell, 1} \in D(\mathcal{A}_1)\) and \(D(\mathcal{A}) \subset D(\mathcal{A}_1)\), we can deduce that \(h\varphi_2 \in D(\mathcal{A}_1)\). However, we have claimed that \(\varphi_2 \notin D(\mathcal{A}_1)\). Therefore, it follows that \(\alpha_2 \mid h\), and thus \(h\varphi_2 \in \ker \rho_1^2\). 
By \(\theta_{\ell, 1} - h\varphi_2 \in D(\mathcal{A})\) and \(D(\mathcal{A}_1) = D(\mathcal{A}) + S\theta_{\ell, 1}\), we have
\[ D(\mathcal{A}_1) = D(\mathcal{A}) + S h \varphi_2. \]
Since \(h\varphi_2 \in \ker \rho_1^2\), we then have
\[ D(\mathcal{A}_1) = D(\mathcal{A}) + \ker \rho_1^2, \]
which contradicts our assumption.
If \(\gamma = \varphi_2\), then \(D(\A_{1,2}) = D(\A) + S\theta_{{\ell}, 1} = D(\A_1)\), which contradicts \(D(\A_{1}) \subsetneq D(\A_{1,2})\).
If there is a \(k \in \{1, \ldots, \ell\}\) such that \(\gamma = \theta_k\), then we may assume that
\begin{align}\label{000}
    0 = p_1 \theta_1 + \cdots + p_k \theta_k + \cdots + p_{\ell} \theta_{\ell} + u \theta_{{\ell}, 1} + v \varphi_2,
\end{align}
where \(p_k = 1\).
It follows that \(v \varphi_2 \in D(\A_1)\). 
Note that we claimed that \(\varphi_2 \notin D(\A_1)\). Hence \(\alpha_2 \mid v\), and \(v = \alpha_2 v'\), where \(v'\in S\).
Since \(p_k = 1\) and \(\theta_1, \cdots, \theta_{\ell}, \theta_{{\ell}, 1}\) form a minimal generating set of \(D(\A_1)\), we have \(v \neq 0\). Thus \(\deg \theta_k = \deg v + \deg \varphi_2 \geq  1 + \deg \varphi_2=c_2 \).
If \(\deg \theta_k > c_2\), then \(q_k = 0\) and \(\alpha_2\varphi_2\in S\theta_1 + \cdots + \hat{S\theta_k} + \cdots + S\theta_{\ell} + S\theta_{{\ell}, 1}\). Since \(v = \alpha_2 v'\), by \cref{000}, it follows that \(\theta_k \in S\theta_1 + \cdots + \hat{S\theta_k} + \cdots + S\theta_{\ell} + S\theta_{{\ell}, 1}\), which is a contradiction.
Thus, \(\deg \theta_k = c_2\). We may let \(v=\alpha_2\). If \(u = 0\), we have \(\alpha_2 \varphi_2 \in D(\A)\). Then \(\varphi_2 \in D(\A_2)\), which contradicts the fact that \(\alpha_1 \varphi_2\) is a level element of \(D(\A_2)\). Hence, \(u \neq 0\). If \(c_1 = c_2\), then we may assume that \(u = 1\). Thus, \(\alpha_2 \varphi_2\) can be a level element for \(D(\A_1)\). By \cref{level_elt_with_ker} \ref{useless_1}, we have \(D(\A) + \ker \rho_1^2 = D(\A_1)\), which contradicts our assumption.

As a conclusion, if \(T\) is not a minimal generating set, we may set \(k = \ell\). Then, we have \(c_1 < c_2 = \deg \theta_{\ell}\), and \(T \setminus \{\theta_{\ell}\}\) forms a minimal generating set for \(D(\A_{1,2})\). Since $\alpha_1\theta_{\ell,1}\in D(\A)$, we have \(\alpha_1\alpha_2 \varphi_2  \in S \theta_1 + \cdots + S \theta_{\ell-1}  + \K^*\alpha_1\theta_{\ell} \).

}
 \item \label{pf-eq1} $D(\A)+\ker \rho_1^2= D(\A_1)$.

By \cref{level_elt_with_ker} \ref{useless_1}, we may assume that \(\theta_{\ell, 1} = \alpha_2 \varphi_1\), where \(\varphi_1 \in D(\A_{1,2})\). This implies that \(\deg \varphi_1 = c_1 - 1\). If \(\alpha_1 \varphi_1 \in D(\A)\), we can conclude, based on \cref{elt_inA_belong_to_B}, that \(\varphi_1 \in D(\A_1)\), which contradicts the assertion that \(\theta_{\ell, 1} = \alpha_2 \varphi_1\) is a level element for \(D(\A_1)\). Therefore, we must conclude that \(\alpha_1 \varphi_1 \in D(\A_2)_{\leq c_1} \setminus D(\A)\). It follows that \(c_1 = c_2\). Hence, \(\alpha_1 \varphi_1\) can serve as a level element for \(D(\A_2)\). According to \cref{level_elt_with_ker} (2), this implies \(D(\A) + \ker \rho_2^1 = D(\A_2)\).

Referring to \cref{cond_for_inter>2}, we establish that \(|\A_{H_1 \cap H_2}| > 2\). Analogous to the proof in \cref{pf-eq}, we conclude that \(D(\A_{1,2}) = D(\A_1) + S \varphi_2\) and  \(\theta_{\ell, 2} = \alpha_1 \varphi_2\) is a level element for \(D(\A_2)\). Since both \(\alpha_1 \varphi_1\) and \(\alpha_1 \varphi_2\) can be level elements of \(D(\A_2)\), it follows that there exists a \(k \in \K \setminus \{0\}\) such that \(k \alpha_1 \varphi_1 - \alpha_1 \varphi_2 \in D(\A)\), which implies \(k \varphi_1 - \varphi_2 \in D(\A_1)_{< c_1} \subset D(\A)\). Thus, \(\varphi_2 \in S \{\theta_1, \ldots, \theta_\ell, \varphi_1\}\).
Note that \(D(\A_1) \subset S \{\theta_1, \ldots, \theta_\ell, \varphi_1\}\). Consequently, the set \(T = \{\theta_1, \ldots, \theta_\ell, \varphi_1\}\) generates \(D(\A_{1,2})\).

{
 If \(T\) is not a minimal generating set, there exists an element \(\gamma \in T\) such that \(\gamma\) is generated by \(T \setminus \{\gamma\}\). Then \(D(\A_{1,2})\) is free. Note that \(\deg \theta_1 + \cdots + \deg \theta_{\ell} = |\A| > |\A_{1,2}|\). By \cref{saito}, there is a \(k \in \{1, \ldots, \ell\}\) such that \(\gamma = \theta_k\). Suppose
\begin{align*}
    \theta_k = p_1 \theta_1 + \cdots + p_{k-1} \theta_{k-1} + p_{k+1} \theta_{k+1} + \cdots + p_{\ell} \theta_{\ell} + p \varphi_1.
\end{align*}
Note that \(\varphi_1\) is not in either \(D(\A_1)\) or \(D(\A_2)\). It follows that \(\alpha_1 \alpha_2 \mid p\). Note that \(\theta_{\ell, 1} = \alpha_2 \varphi_1\). 
Thus, \(\theta_k \in S \theta_1 + \cdots + S \theta_{k-1} + S \theta_{k+1} + \cdots + S \theta_{\ell} + S \theta_{\ell, 1}\), which contradicts the fact that \(\theta_1, \ldots, \theta_k, \ldots, \theta_{\ell}, \theta_{\ell, 1}\) form a minimal generating set of \(D(\mathcal{A}_1)\).

}

\item  $D(\A)+\ker \rho_2^1\subsetneq D(\A_2)$.

By the proof of \cref{pf-eq1}, we can conclude that \(D(\A) + \ker \rho_2^1 \subsetneq D(\A_2)\) implies \(D(\A) + \ker \rho_1^2 \subsetneq D(\A_1)\). 
Obviously, the set \(\{\theta_1, \ldots, \theta_\ell, \theta_{\ell,1}, \theta_{\ell,2}\}\) forms, or can be extended to form, a generating set for \(D(\A_{1,2})\). 
Suppose that the set \(T = \{\theta_i, \gamma_j, \theta_{\ell,1}, \theta_{\ell,2} \mid i = 1, \ldots, \ell, j = 1, \ldots, t\}\) forms a generating set for \(D(\A_{1,2})\) such that \(\gamma_j \notin S (T \setminus \{ \gamma_j\})\). Thus, \(\gamma_j\) is not in either \(D(\A_1)\) or \(D(\A_2)\). This implies that \(\deg \gamma_j \geq c_2\).

{
If \(T\) is not minimal, there exists an element \(\gamma \in T\) such that \(\gamma\) is generated by \(T \setminus \{\gamma\}\). Assume that
\begin{align}\label{useless_dependent}
    \gamma = p_1 \theta_1 + \cdots + p_{\ell} \theta_{\ell} + p \theta_{\ell,1} + q \theta_{\ell,2} + q_1 \gamma_1 + \cdots + q_t \gamma_t,
\end{align}
where $p_i,q_j,p,q\notin \K^*.$
If \(\gamma = \theta_{\ell,1}\), then we may assume that \(p = 0\). Note that \(\deg \gamma_j \geq c_2 \geq c_1 = \deg \theta_{\ell,1}\). Thus, \(q_j \in \mathbb{K}\). It follows that \(q_j = 0\) for all \(j = 1, \ldots, t\). Therefore, \(\theta_{\ell,1} \in D(\mathcal{A}_2)\), which is a contradiction.

Similarly, we can conclude that \(\gamma \neq \theta_{\ell,2}\).

If there is a \(k \in \{1, \ldots, \ell\}\) such that \(\gamma = \theta_k\), then we may assume that \(p_k = 0\). 
If \(q_j = 0\) for all \(j\), then \(pq \neq 0\). By \cref{useless_dependent}, we have \(\deg \theta_k = \deg q + \deg \theta_{\ell,2} > c_2\). Thus, \(p_i = 0\) for any \(\deg \theta_i \geq \deg \theta_k\). By the definition of an SPOG arrangement, both \(\theta_{\ell,1}\) and \(\theta_{\ell,2}\) are \(S\)-dependent with \(\{\theta_i \mid \deg \theta_i \leq c_2\}\). It follows that \(\theta_k\) is \(S\)-dependent with \(\{\theta_i \mid \deg \theta_i < \deg \theta_k\}\), which is a contradiction.
Thus, we may assume that \(q_1\) has the smallest degree among the non-zero coefficient \(\{q_j\}\).  Therefore, \(\deg \theta_k = \deg q_1 + \deg \gamma_1 > \deg \gamma_1\).
Since \(\alpha_1 \gamma_j \in D(\mathcal{A}_1)\) and \(\gamma_j\) is not in \(D(\mathcal{A}_1)\), it follows that \(\gamma_j\) is \(S\)-dependent with \(\{\theta_{\ell,1}\} \cup \{\theta_i \mid \deg \theta_i \leq \deg \gamma_j\}\). 
Note that \(\deg \gamma_j \geq c_2\) for all \(j\), and \(\theta_{\ell,1}\) is \(S\)-dependent with \(\{\theta_i \mid \deg \theta_i \leq c_2\}\).
It follows that \(\theta_k\) is \(S\)-dependent with \(\{\theta_i \mid \deg \theta_i \leq \deg \gamma_1\}\), which is a contradiction.

In conclusion, the set \(\{\theta_1, \ldots, \theta_\ell, \theta_{\ell,1}, \theta_{\ell,2}\}\) forms, or can be extended to, a minimal generating set for \(D(\mathcal{A}_{1,2})\). Furthermore, in the scenario where \(|\mathcal{A}_{H_1 \cap H_2}| = 2\), as corroborated by \cref{inters=2}, the set \(\{\theta_1, \ldots, \theta_\ell, \theta_{\ell,1}, \theta_{\ell,2}\}\) serves as a minimal generating set for \(D(\mathcal{A}_{1,2})\).}
\end{enumerate}

\end{proof}



When constructing a graded free resolution, it is minimal if, and only if, at each step, we select a minimal homogeneous system of generators for the kernel of the differential. Refer to Construction 4.2 and Theorem 7.3 in \cite{min_resol_construct} for details.
Let us employ this approach to construct a minimal free resolution for $D(\A_{1,2})$.

\begin{theorem}\label{free_resolution_Aij}
 If $|\DS{\A_{1,2}}|\leq \ell +2$, then $D(\A_{1,2})$ has a minimal free resolution of one of the following forms:
\begin{enumerate}[label=(\arabic*)]

 \item\label{useless-case2-1} 
     If $D(\A)+\ker \rho_1^2=D(\A_1)$,
    there exist $f_1,\ldots,f_{\ell}\in S$ such that
     \begin{align*} 
        0 \rightarrow S[-c_1-1] \xrightarrow{(\alpha_1\alpha_2,f_1,\ldots,f_{\ell})} S[-c_1+1] \oplus \left(\bigoplus_{i=1}^{\ell} S[-d_i]\right) \rightarrow D(\A_{1,2}) \rightarrow 0.
    \end{align*}

    \item\label{useless-case2-2}    If $D(\A)+\ker \rho_1^2\subsetneq D(\A_1)$, $D(\A)+\ker \rho_2^1= D(\A_2)$, and $|\DS{\A_{1,2}}|= \ell +2$, there exist $f_i,g_i,g\in S$ with $g\neq 0$ such that
    \begin{align*}
        0 \rightarrow S[-c_1-1] \oplus S[-c_2]  & \xrightarrow{\begin{array}{c} (\alpha_1,0, f_1,\ldots,f_{\ell}) \\ (g,\alpha_2, g_1,\ldots,g_{\ell}) \end{array}} \\ & S[-c_1] \oplus S[-c_2+1] \oplus \left(\bigoplus_{i=1}^{\ell} S[-d_i]\right)
        \rightarrow D(\A_{1,2}) \rightarrow 0.
    \end{align*}

 \item \label{useless-case2-2'} If $D(\A)+\ker \rho_1^2\subsetneq D(\A_1)$, $D(\A)+\ker \rho_2^1= D(\A_2)$, and $|\DS{\A_{1,2}}|= \ell +1$, there exist $f_1,\ldots,f_{\ell-1}\in S$ such that
   \begin{align*}
        0 \rightarrow S[-c_1-1] \xrightarrow{(\alpha_1,0, f_1,\ldots,f_{\ell-1})}  S[-c_1] \oplus S[-c_2+1] \oplus \left(\bigoplus_{i=1}^{\ell-1} S[-d_i]\right)
        \rightarrow D(\A_{1,2}) \rightarrow 0.
    \end{align*}
   Moreover, in this case, $\A_{1,2}$ is SPOG.
   
    \item \label{useless-case2-3}   If $D(\A)+\ker \rho_2^1\subsetneq D(\A_2)$, 
there exist  $f_i,g_i\in S$ such that
    \begin{align*}
        0 \rightarrow S[-c_1-1] \oplus S[-c_2-1] &\xrightarrow{\begin{array}{c} (\alpha_1,0, f_1,\ldots,f_{\ell}) \\ (0,\alpha_2, g_1,\ldots,g_{\ell}) \end{array}}  \\ &  S[-c_1] \oplus S[-c_2] \oplus \left(\bigoplus_{i=1}^{\ell} S[-d_i]\right) \rightarrow D(\A_{1,2}) \rightarrow 0.
    \end{align*}
\end{enumerate}
  \end{theorem}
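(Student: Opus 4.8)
The plan is to combine \cref{min_gen_set}, which lists an explicit minimal generating set of $D(\A_{1,2})$ in each configuration, with the observation that the hypothesis $|\DS{\A_{1,2}}|\le\ell+2$ forces $\mathrm{pd}_S(D(\A_{1,2}))\le 1$ by \cref{proj_dim_Aij}. Consequently the minimal free resolution has the shape $0\to M_1\xrightarrow{R_1}M_0\to D(\A_{1,2})\to 0$ with $M_1=\ker(M_0\to D(\A_{1,2}))$ a \emph{free} module. Taking $M_0=\bigoplus_{g}S[-\deg g]$ over the minimal generators $g$, the whole task reduces to exhibiting a homogeneous basis of $M_1$ and recording it as the matrix $R_1$. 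Since $D(\A_{1,2})$ has rank $\ell$, the rank of $M_1$ is $|\DS{\A_{1,2}}|-\ell$, which is $1$ in Cases~\ref{useless-case2-1} and~\ref{useless-case2-2'} and $2$ in Cases~\ref{useless-case2-2} and~\ref{useless-case2-3}; the four cases correspond exactly to the configurations of \cref{min_gen_set}.

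First I would write down the syzygies forced by the SPOG structure. By \cref{rmk-levelEl} we may take $\alpha_j$ as the level coefficient of $\A_j$, so that the SPOG relation of $\A_j$ reads $\alpha_j\theta_{\ell,j}+\sum_i f^{(j)}_i\theta_i=0$ with all coefficients in the maximal ideal $\mathfrak{m}$. In Case~\ref{useless-case2-3} these two relations, one per $\A_j$, give precisely the rows $(\alpha_1,0,f_1,\dots,f_\ell)$ and $(0,\alpha_2,g_1,\dots,g_\ell)$. In Case~\ref{useless-case2-1}, where $\theta_{\ell,1}=\alpha_2\varphi_1$, substitution turns the $\A_1$-relation into the single row $(\alpha_1\alpha_2,f_1,\dots,f_\ell)$. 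In Cases~\ref{useless-case2-2} and~\ref{useless-case2-2'}, where $\theta_{\ell,2}=\alpha_1\varphi_2$, the $\A_1$-relation supplies one row; for the second I would use that $\alpha_2\varphi_2\in D(\A_1)$ (immediate, since $\varphi_2\in D(\A_{1,2})$ and $(\alpha_2\varphi_2)(\alpha_2)\in S\alpha_2$), hence $\alpha_2\varphi_2=g\,\theta_{\ell,1}+\sum_i g_i\theta_i$, giving the row $(g,\alpha_2,g_1,\dots,g_\ell)$. Here $g\neq 0$, for otherwise $\alpha_2\varphi_2\in D(\A)$ would give $\varphi_2\in D(\A_2)_{<c_2}=D(\A)_{<c_2}\subset D(\A_1)$, contradicting $\varphi_2\notin D(\A_1)$. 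In Case~\ref{useless-case2-2'}, since $\deg\theta_\ell=c_2>c_1$ and the $\A_1$-relation is homogeneous of degree $c_1+1$ with coefficients in $\mathfrak{m}$, the coefficient of $\theta_\ell$ is forced to vanish, so after dropping $\theta_\ell$ from the generating set (legitimate by the identity $\alpha_1\alpha_2\varphi_2\in S\theta_1+\cdots+S\theta_{\ell-1}+\K^*\alpha_1\theta_\ell$ of \cref{min_gen_set}) the relation reduces to $(\alpha_1,0,f_1,\dots,f_{\ell-1})$.

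It then remains to prove that the relations found span all of $M_1$. I would pin down the graded degrees of a basis of $M_1$ by a degree count: \cref{saito} gives $\sum_i d_i=|\A|$, and with $|\A_{1,2}|=|\A|-2$ feeding the degrees of $M_0$ into \cref{betti-1} forces the total degree of a basis of $M_1$ to be $c_1+1$ (Case~\ref{useless-case2-1}), $c_1+c_2+1$ (Case~\ref{useless-case2-2}), $c_1+1$ (Case~\ref{useless-case2-2'}) and $c_1+c_2+2$ (Case~\ref{useless-case2-3}). Let $M_1'\subseteq M_1$ be the submodule spanned by the explicit relations; inspecting the submatrix on the columns indexed by $\theta_{\ell,1},\theta_{\ell,2}$ (respectively $\theta_{\ell,1},\varphi_2$, respectively the single distinguished column) shows that the relevant minor is $\alpha_1\alpha_2\neq 0$ (respectively $\alpha_1$), so the relations are $S$-independent and $M_1'$ is free of the same rank and the same total degree as $M_1$. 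The inclusion $M_1'\hookrightarrow M_1$ is then a graded map of free modules of equal rank whose determinant is homogeneous of degree $0$ and nonzero, hence a unit; therefore $M_1'=M_1$ and the listed relations form a basis. Minimality of the resolution is automatic, since we started from a minimal generating set and every entry of $R_1$ lies in $\mathfrak{m}$; in Case~\ref{useless-case2-2} the requirement that $g$ be a non-unit together with $g\neq 0$ moreover forces $c_1<c_2$.

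The main obstacle is the step $M_1'=M_1$, that is, upgrading ``these are relations'' to ``these are \emph{all} the relations''; the combination of the degree count and the determinant computation is what closes it, and it hinges on the Betti total degree obtained from \cref{betti-1} matching the total degree of the explicit relations, and on the distinguished minor being a nonzero constant multiple of a product of the $\alpha_j$. A secondary delicate point is the bookkeeping that separates Case~\ref{useless-case2-2} from Case~\ref{useless-case2-2'}: one must confirm that \cref{min_gen_set} yields exactly $\ell+2$ generators in the former and $\ell+1$ in the latter, and in the latter that $\theta_\ell$ genuinely drops out because its SPOG coefficient is forced to be zero. Finally, reading the single-relation resolution of Case~\ref{useless-case2-2'} against \cref{spogarr}, with level coefficient $\alpha_1\in S_1$ and level element $\theta_{\ell,1}$ of degree $c_1$, shows that $\A_{1,2}$ is SPOG.
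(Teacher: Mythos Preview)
Your proposal has a genuine circularity. You invoke \cref{proj_dim_Aij} to conclude that $\mathrm{pd}_S(D(\A_{1,2}))\le 1$, and hence that $M_1$ is free, but in the paper \cref{proj_dim_Aij} is proved \emph{using} \cref{free_resolution_Aij} (together with \cref{free_resolution_>l+2_M0}). In fact the implication you need, $|\DS{\A_{1,2}}|\le\ell+2\Rightarrow\mathrm{pd}_S\le 1$, is exactly the content supplied by the resolutions in \cref{free_resolution_Aij}. So you cannot appeal to \cref{proj_dim_Aij} here.

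This is not a cosmetic issue: your argument that the explicit relations span $M_1$ rests on comparing $M_1'\hookrightarrow M_1$ as a map of \emph{free} modules of equal rank and equal total degree, then invoking the degree-$0$ determinant. If you do not already know $M_1$ is free, that step collapses; and the degree count from \cref{betti-1} only determines the total degree of $M_1$ once you know the resolution terminates at $M_1$. The paper avoids this by arguing directly, case by case, that every syzygy lies in the span of the exhibited ones. The mechanism is always the same: given an arbitrary relation $(p,q,p_1,\dots,p_\ell)$ (or its analogue), the coefficient on the distinguished generator is shown to be divisible by the relevant $\alpha_j$ because the corresponding level element is not in $D(\A)$; after peeling off a multiple of one exhibited syzygy, what remains is a relation among a minimal generating set of the SPOG module $D(\A_1)$ (or $D(\A_2)$), which is unique by \cref{spogarr}. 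Your write-up already contains the ingredients for this (you note $\alpha_2\mid q$ etc.\ in passing), so the fix is to replace the projective-dimension/degree-count shortcut with this direct divisibility-plus-SPOG-uniqueness argument in each of the four cases.
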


\begin{proof}
We retain the notation introduced in \cref{min_gen_set} and its proof.
Given $|\DS{\A_{1,2}}|\leq \ell +2$ and utilizing \cref{min_gen_set}, we proceed to analyze the proof on a case-by-case basis.

    \begin{enumerate}[label=(\arabic*)]

    \item $D(\A)+\ker \rho_1^2=D(\A_1)$.
    
Note that $\{ \varphi_1, \theta_i \mid i=1, \ldots, \ell \}$ is a minimal generator set for $D(\A_{1,2})$. Since $\theta_{\ell ,1} = \alpha_2\varphi_1$ is a level element for $D(\A_1)$, 
we may assume that 
$\alpha_1(\alpha_2\varphi_1)+\sum_{i=1}^\ell f_i\theta_i=0$.
 In other words, there exists a relation
 \begin{align}\label{useless-rel01}
(\alpha_1\alpha_2,f_1,\ldots,f_\ell)
\end{align}
 between $\{\varphi_1, \theta_1,   \ldots, \theta_\ell \}$.
 
Suppose there exists another $S$-independent relation, say:
 \begin{align}\label{useless-rel02}
(p,p_1,\ldots,p_\ell).
\end{align}
This implies that $p\varphi_1 \in D(\A)$.
Consequently,  $p\varphi_1(\alpha_2)\in S\alpha_2$.
If $\varphi_1(\alpha_2)\in S\alpha_2$, it implies that $\varphi_1\in D(\A_1)$, which contradicts the statement that 
$\theta_{\ell ,1} = \alpha_2\varphi_1$ is a level element for $D(\A_1)$.
As a result, we conclude that $\alpha_2 | p$. 
{Therefore Relation  \eqref{useless-rel02}   is actually a relation amongst the minimal generators of the SPOG arrangement $\A_1$, of which there is only one by \cref{spogarr}.}
 Consequently, there exist $f_1, \ldots, f_{\ell}\in S$  such that $D(\A_{1,2})$ has a minimal free resolution of the following forms:
  \begin{align*} 
        0 \rightarrow S[-c_1-1] \xrightarrow{(\alpha_1\alpha_2,f_1,\ldots,f_{\ell})} S[-c_1+1] \oplus \left(\bigoplus_{i=1}^{\ell} S[-d_i]\right) \rightarrow D(\A_{1,2}) \rightarrow 0.
    \end{align*}

\item $D(\A)+\ker \rho_1^2\subsetneq D(\A_1)$, $D(\A)+\ker \rho_2^1=D(\A_2)$ and $|\DS{\A_{1,2}}|=\ell+2$.
        
Note that $\{\theta_{\ell ,1}, \varphi_2,\theta_1,   \ldots, \theta_\ell \}$ is a minimal generator set for $D(\A_{1,2})$. Since $\theta_{\ell ,1}$ is a level element for $D(\A_1)$, we can assume that
\begin{align*}
\alpha_1\theta_{\ell ,1} + \sum_{i=1}^\ell f_i\theta_i = 0.
\end{align*}
In other words, there exists a relation
\begin{align}\label{useless-relation1}
(\alpha_1, 0, f_1, \ldots, f_\ell)
\end{align}
between $\{ \theta_{\ell ,1}, \varphi_2,\theta_1,   \ldots, \theta_\ell \}$.

Since $\alpha_2\varphi_2\in D(\A_1)\setminus D(\A)$, we may assume that 
\begin{align*}
       g\theta_{\ell ,1}+ \alpha_2\varphi_2+\sum_{i=1}^\ell g_i\theta_i=0,
\end{align*}
with $g\neq 0$.
This {gives another relation}
\begin{align}\label{useless-relation2}
        ( g,\alpha_2,g_1,\ldots, g_\ell)
\end{align}
between $\{ \theta_{\ell ,1}, \varphi_2,\theta_1,   \ldots, \theta_\ell \}$,  and it is $S$-independent with Relation \eqref{useless-relation1}.

If there exist additional relations, denoted as $(p, q, p_1, \ldots, p_\ell)$, the implication is as follows:
$$p\theta_{\ell ,1}+q\varphi_2+\sum_{i=1}^\ell p_i\theta_i=0.$$

This implies that $q\varphi_2\in D(\A_1)$.
Consequently,  $q\varphi_2(\alpha_2)\in S\alpha_2$.
If $\varphi_2(\alpha_2)\in S\alpha_2$, it implies that $\alpha_1\varphi_2\in D(\A)$, which contradicts the assertion that the level element 
$\theta_{\ell ,2}=\alpha_1\varphi_2$ for $D(\A_2)$ is not in $D(\A)$.
As a result, we conclude that $\alpha_2 | q$. 
{We see $\frac{q}{\alpha_2}(g, \alpha_2, g_1, \ldots, g_\ell)-(p, q, p_1, \ldots, p_\ell)$ is a relation of the form \eqref{useless-relation1}, which is the unique $D(\A_1)$ relation.
Thus, $(p, q, p_1, \ldots, p_\ell)$ can be represented by the relations \eqref{useless-relation1} and \eqref{useless-relation2}.
That \eqref{useless-relation1} and \eqref{useless-relation2} are $S$-independent is clear since $\alpha_2\neq 0$.}

Thus $D(\A_{1,2})$ has a minimal free resolution of the following forms:
    \begin{align*}
        0 \rightarrow S[-c_1-1] \oplus S[-c_2]  & \xrightarrow{\begin{array}{c} (\alpha_1,0, f_1,\ldots,f_{\ell}) \\ (g,\alpha_2, g_1,\ldots,g_{\ell}) \end{array}} \\ & S[-c_1] \oplus S[-c_2+1] \oplus \left(\bigoplus_{i=1}^{\ell} S[-d_i]\right)
        \rightarrow D(\A_{1,2}) \rightarrow 0.
    \end{align*}

\item $D(\A)+\ker \rho_1^2\subsetneq D(\A_1)$, $D(\A)+\ker \rho_2^1=D(\A_2)$ and $|\DS{\A_{1,2}}|=\ell+1$.

{
Since \(\A_1\) is SPOG and \(D(\A_1)\) is generated by \(\{\theta_{{\ell},1}, \theta_1, \ldots, \theta_{\ell}\}\), we may assume that
\begin{align*}
    \alpha_1 \theta_{\ell,1} + \sum_{i=1}^\ell f_i \theta_i = 0.
\end{align*}
By \cref{min_gen_set} \ref{useless-2}, we have \(c_1 < c_2 = \deg \theta_{\ell}\).
Thus, \(f_{\ell} = 0\), and we obtain a relation
\[
(\alpha_1, 0, f_1, \ldots, f_{\ell-1})
\]
between { \(\{\theta_{{\ell},1}, \varphi_2, \theta_1, \ldots, \theta_{\ell-1} \}\),} the minimal generating set of \(D(\A_{1,2})\).}

If additional relations are present, denoted as \((p, q, p_1, \ldots, p_{\ell-1})\), the corresponding equation is expressed as follows:
\[
p \theta_{\ell,1} + q \varphi_2 + \sum_{i=1}^{\ell-1} p_i \theta_i = 0.
\]
{Note that \(\theta_{\ell,1}\) is \(S\)-dependent with \(\theta_1, \ldots, \theta_{\ell-1}\) by $f_\ell=0$.}
By rearranging, we get:
\[
-q \varphi_2 = p \theta_{\ell,1} + \sum_{i=1}^{\ell-1} p_i \theta_i,
\]
which implies that \(q \varphi_2\) is \(S\)-dependent with \(\theta_1, \ldots, \theta_{\ell-1}\).
However, considering \(q \alpha_2 \theta_{\ell,2} = q \alpha_2 \alpha_1 \varphi_2 \in S \theta_1 + \cdots + S \theta_{\ell-1} + q \K^* \alpha_1 \theta_\ell\), we deduce that \(q = 0\).
This gives a relation between the generators for the SPOG arrangement \(\A_1\). Hence, we can conclude that
\(\{\theta_{\ell,1}, \varphi_2, \theta_1, \ldots, \theta_{\ell-1}\}\) has a unique relation
\((\alpha_1, 0, f_1, \ldots, f_{\ell-1})\).
Thus, \(D(\A_{1,2})\) has a minimal free resolution of the following form:
\begin{align*}
    0 \rightarrow S[-c_1-1] \xrightarrow{(\alpha_1, 0, f_1, \ldots, f_{\ell-1})} S[-c_1] \oplus S[-c_2+1] \oplus \left(\bigoplus_{i=1}^{\ell-1} S[-d_i]\right) \rightarrow D(\A_{1,2}) \rightarrow 0.
\end{align*}
Moreover, based on the definition of SPOG, we deduce that \(\A_{1,2}\) is SPOG.

    \item  $D(\A)+\ker \rho_2^1\subsetneq D(\A_2)$.
    
Note that $\{\theta_{\ell ,1}, \theta_{\ell ,2},  \theta_1, \ldots, \theta_{\ell} \}$ is a minimal generator set for $D(\A_{1,2})$. Since $\theta_{\ell, j}$, where $j=1,2$, is a level element for $D(\A_j)$, we can assume that
\begin{align*}
&\alpha_1\theta_{\ell ,1} + \sum_{i=1}^\ell f_i\theta_i = 0,\\
&\alpha_2\theta_{\ell ,2} + \sum_{i=1}^\ell g_i\theta_i = 0.
\end{align*}

In other words, there are two relations
\begin{align}
&(\alpha_1, 0, f_1, \ldots, f_\ell)\label{useless-relationI-1}
\\
&( 0, \alpha_2,{g_1, \ldots, g_\ell})\label{useless-relationI-2}
\end{align}
between $\{ \theta_{\ell ,1}, \theta_{\ell ,2}, \theta_1, \ldots, \theta_{\ell} \}$. 
If there exist additional relations, denoted as $(p, q, p_1, \ldots, p_\ell)$, the implication is as follows:
$$p\theta_{\ell ,1}+q\theta_{\ell ,2}+\sum_{i=1}^\ell p_i\theta_i=0.$$

This implies that $q\theta_{\ell ,2}\in D(\A_1)$. 
Consequently,  $q\theta_{\ell ,2}(\alpha_2)\in S\alpha_2$.
Considering that 
$\theta_{\ell ,2}$ is a level element for $D(\A_2)$, it follows that $\theta_{\ell ,2}(\alpha_2)\notin S\alpha_2$.
As a result, we conclude that $\alpha_2 | q$. According to Relation \eqref{useless-relationI-2}, we have $q\theta_{\ell ,2}\in D(\A)$.
This observation suggests that this relation establishes a connection among $\{\theta_{\ell ,1},  \theta_1, \ldots, \theta_{\ell}  \}$. 
Given that Relations \eqref{useless-relationI-1} and  \eqref{useless-relationI-2} uniquely characterize the relationship between sets $\{\theta_{\ell ,1},  \theta_1, \ldots, \theta_{\ell}  \}$ and $\{\theta_{\ell ,2},  \theta_1, \ldots, \theta_{\ell}  \}$, respectively, and taking into account the $S$-independence of $\theta_1, \ldots, \theta_\ell$, we can deduce that Relation $(p, q, p_1, \ldots, p_\ell)$ can be represented by Relations \eqref{useless-relationI-1} and  \eqref{useless-relationI-2}.
That Relations \eqref{useless-relationI-1} and  \eqref{useless-relationI-2} are $S$-independent is clear since $\alpha_1,\alpha_2\neq 0$.

Thus $D(\A_{1,2})$ has a minimal free resolution of the following forms:
 \begin{align*}
        0 \rightarrow S[-c_1-1] \oplus S[-c_2-1]  & \xrightarrow{\begin{array}{c} (\alpha_1,0, f_1,\ldots,f_{\ell}) \\ (0,\alpha_2, g_1,\ldots,g_{\ell}) \end{array}} \\ & S[-c_1] \oplus S[-c_2] \oplus \left(\bigoplus_{i=1}^{\ell} S[-d_i]\right)
        \rightarrow D(\A_{1,2}) \rightarrow 0.
    \end{align*}
\end{enumerate}
\end{proof}

\begin{theorem}\label{free_resolution_>l+2_M0}
 Let $\mathcal{B}$ in \cref{def_resol} be defined as $\mathcal{A}_{1,2}$.
 Assume that $|\DS{\A_{1,2}}| > \ell + 2$ with $\DS{\A_{1,2}} = (d_1,\ldots,d_\ell,c_1,\ldots,c_r)$.
 Then, the following statements hold:  
    \begin{enumerate}[label=(\arabic*)]
        \item We have
        $\bigoplus_{j=1}^r S[-c_j-1] \subsetneq M_1$, and there exist $h_{ij}, h_j\in S$ with $h_j\neq 0$ such that
          \begin{align*}
             & R_1(1) = (h_{11},\ldots,h_{\ell 1},\alpha_1,0,\ldots,0),\\
             &R_1(2)= (h_{12},\ldots,h_{\ell 2},0,\alpha_2,0\ldots,0),\\
             &R_1(j)= (h_{1j},\ldots,h_{\ell j},h_j,0,\ldots,0,\alpha_2,0,\ldots,0),
         \end{align*}
     where $R_1(j)$ denotes the $j$-th row of $R_1$,
     and 
     $R_1(j)(\ell + j)=\alpha_2$ for $j=3,\ldots,r$.

     \item  We write
\begin{align*}
 & R_1 =(\psi_1,\ldots, \psi_r, \phi_1,\ldots, \phi_t)^T,  \\
 & M_1 = \left(\bigoplus_{j=1}^r S[-c_i-1]\right) \oplus \left(\bigoplus_{i=1}^t S[-e_i]\right)
\end{align*}
with some $t>0$. Here, $\psi_j$ corresponds to $R_1(j)$ as defined in $(1)$, where $j=1,\cdots,r$.

Then, 
$\bigoplus_{i=1}^t S[-e_i-1] \subset M_2$.
In particular, $M_2\neq 0$.
Moreover, 
$R_2(i)$ has the form $(f_{1i},\ldots,f_{r i},0,\ldots,0,\alpha_2,0,\ldots,0)$,
where  $f_{ji}\in S$ and $R_{2}(i)(r+i)=\alpha_2$ for $i=1,\ldots, t$.
When $\bigoplus_{i=1}^t S[-e_i-1] = M_2$,
we have $t=r-2$ and $M_3=0$.
\end{enumerate} 
\end{theorem}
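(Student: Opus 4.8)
The plan is to work throughout in the only configuration of \cref{min_gen_set} compatible with $|\DS{\A_{1,2}}|>\ell+2$, namely the case $D(\A)+\ker\rho_2^1\subsetneq D(\A_2)$, in which a minimal generating set of $D(\A_{1,2})$ is $\{\theta_1,\ldots,\theta_\ell,\theta_{\ell,1},\theta_{\ell,2},\gamma_1,\ldots,\gamma_{r-2}\}$ with $\deg\gamma_k=c_{k+2}$ and each $\gamma_k\notin D(\A_1)\cup D(\A_2)$. This fixes $M_0=\bigoplus_{i=1}^\ell S[-d_i]\oplus\bigoplus_{j=1}^r S[-c_j]$, ordered so that positions $\ell+1,\ \ell+2,\ \ell+2+k$ carry $\theta_{\ell,1},\theta_{\ell,2},\gamma_k$ respectively.

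For part $(1)$ I would produce the rows of $R_1$ directly. The rows $\psi_1,\psi_2$ are the unique SPOG relations of $D(\A_1)$ and $D(\A_2)$; since $\alpha_i\theta_{\ell,i}\in D(\A)$ (by \cref{rmk-levelEl}), expanding $\alpha_1\theta_{\ell,1}$ and $\alpha_2\theta_{\ell,2}$ in the basis $\theta_1,\ldots,\theta_\ell$ gives $\psi_1=(\ast,\ldots,\ast,\alpha_1,0,\ldots,0)$ and $\psi_2=(\ast,\ldots,\ast,0,\alpha_2,0,\ldots,0)$. For $j\ge3$, checking the defining condition at every $H\in\A_{1,2}$ and at $H_2$ shows $\alpha_2\gamma_{j-2}\in D(\A_1)$, so expanding it in the generators $\theta_1,\ldots,\theta_\ell,\theta_{\ell,1}$ of $D(\A_1)$ yields $\psi_j=(\ast,\ldots,\ast,h_j,0,\ldots,0,\alpha_2,0,\ldots,0)$ with $\alpha_2$ in position $\ell+j$; here $h_j\ne0$, since $h_j=0$ would force $\alpha_2\gamma_{j-2}\in D(\A)$ and hence $\gamma_{j-2}\in D(\A_2)$ by \cref{elt_inA_belong_to_B}, a contradiction. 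Reading off the coefficients of $\theta_{\ell,1},\theta_{\ell,2},\gamma_1,\ldots$ shows $\psi_1,\ldots,\psi_r$ form a lower-triangular block with nonzero diagonal $\alpha_1,\alpha_2,\ldots,\alpha_2$, so they are $S$-independent and belong to a minimal generating set of $\ker(M_0\to D(\A_{1,2}))$, giving $\bigoplus_{j=1}^r S[-c_j-1]\subseteq M_1$. The proper inclusion follows from \cref{betti-1}: were these the only generators of $M_1$ they would generate it freely, forcing $\mathrm{pd}_S(D(\A_{1,2}))=1$ and $|\A_{1,2}|=\sum_i d_i-r=|\A|-r$, whereas $|\A_{1,2}|=|\A|-2$; since $r>2$ this is impossible, so there exist further generators $\phi_1,\ldots,\phi_t$ of $M_1$ with $t>0$.

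The heart of part $(2)$ is the claim that $\alpha_2\cdot\ker(M_0\to D(\A_{1,2}))\subseteq S\psi_1+\cdots+S\psi_r$. To prove it, take any syzygy $\sum_i a_i\theta_i+b_1\theta_{\ell,1}+b_2\theta_{\ell,2}+\sum_k b_{k+2}\gamma_k=0$, multiply through by $\alpha_2$, and substitute for $\alpha_2\theta_{\ell,2}$ and $\alpha_2\gamma_k$ the expressions read off from $\psi_2,\ldots,\psi_r$. What remains is a syzygy among the generators $\theta_1,\ldots,\theta_\ell,\theta_{\ell,1}$ of the SPOG module $D(\A_1)$, which by \cref{spogarr} is a single multiple $p\psi_1$; matching coefficients gives $\alpha_2\cdot(\text{syzygy})=p\psi_1+b_2\psi_2+\sum_k b_{k+2}\psi_{k+2}$. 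Applied to each extra generator $\phi_i$ this produces the relation $R_2(i)=(f_{1i},\ldots,f_{ri},0,\ldots,0,\alpha_2,0,\ldots,0)$ with $\alpha_2$ in position $r+i$, whence $\bigoplus_{i=1}^t S[-e_i-1]\subseteq M_2$ and in particular $M_2\ne0$. These $\alpha_2$'s again sit in distinct positions $r+i$, so the $R_2(i)$ are $S$-independent; hence if $M_2=\bigoplus_{i=1}^t S[-e_i-1]$ they generate it freely, forcing $M_3=0$, and \cref{betti-1} applied to the resulting length-two resolution gives $\sum_i d_i-r+t=|\A_{1,2}|=|\A|-2$, i.e. $t=r-2$.

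The main obstacle I anticipate is not the existence of the displayed syzygies but the minimality bookkeeping: confirming that $\psi_1,\ldots,\psi_r$ (respectively the $R_2(i)$) are genuine members of a minimal generating set of the relevant syzygy module, not merely elements of it. The triangular shape with degree-one diagonal entries is what should drive this, since each exhibited syzygy is the lowest-degree relation touching its distinguished generator, but making this graded-minimality argument precise, together with securing the containment $\alpha_2\cdot\ker\subseteq S\psi_1+\cdots+S\psi_r$ at exactly the multiplicity $\alpha_2$ (degree one, rather than a higher power of $\alpha_2$), is the delicate point.
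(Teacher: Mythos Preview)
Your proposal is correct and follows the paper's argument almost step for step: the same generators $\psi_1,\ldots,\psi_r$ built from $\alpha_1\theta_{\ell,1},\alpha_2\theta_{\ell,2},\alpha_2\gamma_k\in D(\A_1)$, the same use of \cref{betti-1} for the strict inclusion, and the same reduction $\alpha_2\phi_i - \sum_{j\ge2} v_{ji}\psi_j$ to the unique SPOG relation $\psi_1$ of $D(\A_1)$ for part~(2). The ``obstacle'' you flag is handled precisely as you guessed: since each displayed syzygy has a degree-one entry ($\alpha_1$ or $\alpha_2$) in a position not touched by the others, if it were a positive-degree $S$-combination of the actual minimal generators that entry would land in $S_{\ge2}$, a contradiction; this is the paper's minimality argument verbatim.
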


\begin{proof}

    \begin{enumerate}[label=(\arabic*)]
        \item
        
     By \cref{min_gen_set}, we may assume that the minimal generator set of $D(\A_{1,2})$ is $T=\{\theta_i,\varphi_j\mid \deg \theta_i=d_i,\deg \varphi_j=c_j, i=1,\ldots,\ell, j=1,\ldots,r\}$. Here $\{\theta_i\mid i=1,\ldots,\ell\}$ forms a basis for $D(\A)$, and $\varphi_1$ and $\varphi_2$ represent the level elements  of $D(\A_{1})$ and $D(\A_2)$, respectively. By \cref{min_gen_set}, it holds that $c_i \geq c_2$ for $i > 2$, and $$M_0=\left(\bigoplus_{i=1}^\ell S[-d_i]\right)\bigoplus \left(\bigoplus_{j=1}^r S[-c_i]\right).$$

Note that $\alpha_1\varphi_1, \alpha_2\varphi_2\in  D(\A)$ and $\alpha_2\varphi_j\in D(\A_1)\setminus D(\A)$ for $j>2$. Hence we have the following relations:
     \begin{align*}
         & \psi_1:=(h_{11},\ldots,h_{\ell 1},\alpha_1,0,\ldots,0),\\
         &\psi_2:=(h_{12},\ldots,h_{\ell 2},0,\alpha_2,0\ldots,0),\\
         &\psi_j:=(h_{1j},\ldots,h_{\ell j},h_j,0,\ldots,0,\alpha_2,0,\ldots,0),
     \end{align*}
where $h_j \neq 0$ and $\alpha_2=\psi_j(\ell+j)$. 

Suppose there exists a  $j_0 \in \{1, \ldots, r\}$ such that {$\psi_{j_0}$ is not $\K$-spanned by $\{R_1(i)\}_i$}. In this scenario, we can assume the existence of relations $R_{1}(1), \ldots, R_1(w) \in R_1$ such that $\psi_{j_0} = \sum_{i=1}^w p_iR_1(i)$, where $p_i \in S_{\geq 1}$. It's noteworthy that $p_i$ and $R_1(i)(j)$, where $i=1, \ldots, w$ and $j=1, \ldots, \ell +r$, are homogeneous polynomials in $S_{\geq 1}$.

If $j_0=1$, this implies that
\[
\alpha_1 = \psi_{1}(\ell +1) = \sum_{i=1}^w p_iR_1(i)(\ell +1) \in S_{\geq 2},
\]
leading to a contradiction.
Hence, we deduce that $j_0 > 1$. Further, examining the expression
\[
\alpha_2 = \psi_{j_0}(\ell +j_0) = \sum_{i=1}^w p_iR_1(i)(\ell +j_0) \in S_{\geq 2},
\]
we again arrive at a contradiction. Consequently, we have that $R_1(1)=\psi_1, \ldots, R_1(r)=\psi_r$ and $\bigoplus_{j=1}^r S[-c_i-1] \subset M_1$.

Assume $\bigoplus_{j=1}^r S[-c_i-1] = M_1$. 
Observe that $\psi_1(\ell+1, \ldots, \ell+r), \ldots, \psi_r(\ell+1, \ldots, \ell+r)$ form a lower triangular matrix. It is evident that the relations $\psi_1, \ldots, \psi_r$ are $S$-independent over $S$.
As a consequence, we deduce that 
$M_2 = 0$.
By referencing \cref{betti-1}, we can deduce that $ |\A_{1,2}|=(\sum_{i=1}^\ell d_i + \sum_{j=1}^r c_j) - (\sum_{j=1}^r (c_j + 1)) =\sum_{i=1}^\ell d_i-r$.
Considering $|\A_{1,2}| = |\A| - 2 = \sum_{i=1}^\ell d_i - 2$, it follows that $r = 2$, contradicting the assumption $|\DS{\A_{1,2}}| = \ell + r > \ell + 2$. Hence, we conclude that 
$\bigoplus_{j=1}^r S[-c_i-1] \subsetneq M_1$.

\item 
{
 We can assume that $\phi_i = (u_{1i}, \ldots, u_{\ell i}, v_{1i}, \ldots, v_{r i})$. We set $\phi$ as follows:
\begin{align*}
    \phi =& \alpha_2\phi_i - (v_{2i}\psi_2 + \cdots + v_{r i}\psi_r)\\
    =& (\alpha_2u_{1i} + \sum_{j=2}^r v_{ji}h_{1j}, \ldots, \alpha_2u_{\ell i} + \sum_{j=2}^r v_{ji}h_{\ell j}, \alpha_2v_{1i} + \sum_{j=3}^r v_{ji}h_j, 0, \ldots, 0).
\end{align*}
If \(\phi = 0\), let \(f_{1i} = 0\) and \(f_{ji} = -v_{ji}\) for \(j = 2, \ldots, r\). Then we have
\[ 
R_{\phi_i} = (f_{1i}, \ldots, f_{r i}, 0, \ldots, 0, \alpha_2, 0, \ldots, 0) 
\]
which represents a relation among the relations \(R_1(1), \ldots, R_1(r+t)\), where \(\alpha_2 = R_{\phi_i}(r+i)\).
Otherwise, \(\phi\) forms a relation among \(\theta_1, \ldots, \theta_\ell, \varphi_1\). Since \(\mathcal{A}_1\) is SPOG, \(\psi_1\) is the unique relation among \(\theta_1, \ldots, \theta_\ell, \varphi_1\). Hence, we deduce the existence of some \(v \in S\) such that \(\phi - v\psi_1 = 0\). Let \(f_{1i} = -v\) and \(f_{ji} = -v_{ji}\) for \(j = 2, \ldots, r\). It follows that
\[ 
R_{\phi_i} = (f_{1i}, \ldots, f_{r i}, 0, \ldots, 0, \alpha_2, 0, \ldots, 0) 
\]
represents a relation among the relations \(R_1(1), \ldots, R_1(r+t)\), where \(\alpha_2 = R_{\phi_i}(r+i)\).

If there exists an $i_0 \in \{1, \ldots, t\}$ such that $R_{\phi_{i_0}} $ is not $\K$-spanned by $ \{R_2(i)\}_i$, we can assume the existence of relations $R_{2}(1),\ldots,R_2(w)$ such that 
$R_{\phi_{i_0}} = \sum_{i=1}^w p_iR_2(i)$, where $p_i\in S_{\geq 1}$.
It's worth noting that $p_i$ and $R_2(i)(j)$, where $i=1,\ldots,w$ and $j=1,\ldots,r+t$, are homogeneous polynomials in $S_{\geq 1}$.
This implies that

\[
\alpha_2 = R_{\phi_{i_0}}(r+i_0)
= \sum_{i=1}^w p_iR_2(i)(r+i_0) \in S_{\geq 2},
\]
leading to a contradiction.
Consequently, this reasoning implies that $R_2(1)=R_{\phi_1},\ldots, R_2(t)=R_{\phi_t}$ and $\bigoplus_{j=1}^t S[-e_i-1]\subset M_2$.

 Assume  $M_2=\bigoplus_{i=1}^t S[-e_i-1]$.
Observe that $R_{\phi_1}(r+1, \ldots, r+t), \ldots, R_{\phi_t}(r+1, \ldots, r+t)$
 form a lower triangular matrix,
 it is evident that the relations $R_{\phi_1}, \ldots, R_{\phi_t}$ are $S$-independent.
As a consequence, we deduce that
$M_3 = 0$. 
 Since 
$(\sum_{i=1}^\ell d_i + \sum_{j=1}^r c_j) - (\sum_{j=1}^r (c_j + 1) + \sum_{i=1}^t e_i)+( \sum_{i=1}^t (e_i+1)) = |\A_{1,2}|$ by \cref{betti-1},
it follows that $r-t = 2$.}
\end{enumerate}
\end{proof}



\begin{proof}[Proof of \cref{proj_dim_Aij}]
By \cref{diml,free_resolution_Aij,free_resolution_>l+2_M0}, the conclusion follows immediately.
\end{proof}
We would like to show some examples\footnote{We have written a program and used it to compute examples. 
\\ 
You can find the code at  https://github.com/jcwjmz/LogarithmicDerivationModule.git}.

\begin{example}
    Let $\ell=4$ and 
\begin{align*}
Q(\A)=&x_1x_2 x_3 x_4 (x_1-x_2) (x_1-x_3) (x_1-x_4)(x_2-x_3) (x_2-x_4)\\
&(x_3-x_4)(x_2-x_3+x_4)(x_1-x_2+x_3-x_4) .
\end{align*}
Then, $\A$ is free with $\EP{\A}=(1,3,4,4)$. 
The order of $H_i\in \A$ is consistent with its order of appearance in the polynomial.

By computer, $\A_1$ and $\A_2$ both are SPOG with level $4$, and  $\A_8$ and $\A_{10}$ both are SPOG with level $5$. Moreover, we have:
 \begin{enumerate}[label=(\arabic*)]
\item $\DS{\A_{1,2}}=(1,3,3,4,4)$.
The minimal free resolution of $D(\mathcal{A}_{1,2})$ has the following forms:
\begin{align*} 
        0 \rightarrow S[-5] \rightarrow 
        S[-4]^2 \oplus S[-3]^2 \oplus S[-1] \rightarrow D(\A_{1,2}) \rightarrow 0.
    \end{align*}

\item  $\DS{\A_{2,10}}=(1,3,4,4,4,4)$.
The minimal free resolution of $D(\mathcal{A}_{2,10})$ has the following forms:
\begin{align*} 
        0 \rightarrow S[-5]^2 \rightarrow 
         S[-4]^4 \oplus S[-3] \oplus S[-1] \rightarrow D(\A_{2,10}) \rightarrow 0.
    \end{align*}

\item $\DS{\A_{1,8}}=(1,3,4,4,4,5)$.
The minimal free resolution of $D(\mathcal{A}_{1,8})$ has the following forms:
\begin{align*} 
        0 \rightarrow S[-6] \oplus S[-5] \rightarrow 
        S[-5] \oplus S[-4]^3 \oplus S[-3] \oplus S[-1] \rightarrow D(\A_{1,8}) \rightarrow 0.
    \end{align*}
\end{enumerate}

\end{example}



\begin{example}
    Let $\ell=4$ and 
\begin{align*}
    Q(\A)=&x_1 x_2 x_3 x_4(x_1-x_2) (x_1-x_3)(x_2-x_3)(x_3-x_4)\\
     &(x_2-x_3+x_4) (x_1-x_2+x_3-x_4).
\end{align*}
Then, $\A$ is free with $\EP{\A}=(1,3,3,3)$. The order of $H_i\in \A$ is consistent with its order of appearance in the polynomial.

Note that $\A^{2}$ is not free, serving as a counter-example to Orlik's Conjecture. This can be found in \cite{DiPasquale} or \cite{N-S-2023} by performing a coordinate change.

By computation, $\A_1, \A_2$, and $\A_3$ are SPOG with level $3$, and $|\DS{\A_{2,j}}|=5<\ell +2$ for any $H_j\in \A_2$. Additionally, for $\A^1$ and $\A^3$ are free, we found that $|\DS{\A_{1,3}}|=7>\ell+2$.
The minimal free resolution of $D(\mathcal{A}_{1,3})$ has the following form:

\[
0 \rightarrow S[-5] \rightarrow 
S[-4]^4 \rightarrow 
S[-3]^6 \oplus S[-1] \rightarrow D(\A_{1,3}) \rightarrow 0.
\] 
\end{example}

\begin{remark}\label{rem:counter}

The counterexample to Orlik's conjecture mentioned above corresponds to the situation $|D(\A_{1,2})|>\ell+2$. However, every NT-free-2 arrangement of Edelman and Reiner's counterexample, with dimension $\ell=5$, 
contains at most $7=\ell+2$ minimal generators.
\end{remark}

Applying our theorem allows us to recover the result in \cite{abe-yama2023} and verify a part of Conjecture 4.4 from \cite{spog}.

\begin{corollary}\cite{abe-yama2023}
     If $\A$ and $\A_{1,2} $ are both free, then at least one of $\A_1$ and $\A_2$ is free.
\end{corollary}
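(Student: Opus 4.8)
The plan is to argue by contradiction, using the structure theorems on the minimal free resolution of $D(\A_{1,2})$ established above. Assume $\A$ and $\A_{1,2}$ are both free but that neither $\A_1$ nor $\A_2$ is free. Since $\A$ is free and each $\A_j=\A\setminus\{H_j\}$ is the deletion of a single hyperplane, \cref{diml} forces each $\A_j$ to be either free or SPOG; as neither is free by assumption, both $\A_1$ and $\A_2$ must be SPOG, with $\PO{\A_1}=\PO{\A_2}=\EP{\A}$. This is precisely the standing hypothesis under which \cref{free_resolution_Aij} was proved, so all of its machinery is available.

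First I would observe that freeness of $\A_{1,2}$ forces $|\DS{\A_{1,2}}|\le \ell+2$. Indeed, if $\A_{1,2}$ is free then $\DS{\A_{1,2}}=\EP{\A_{1,2}}$ has exactly $\ell$ entries, so $|\DS{\A_{1,2}}|=\ell\le \ell+2$; equivalently $pd_S(D(\A_{1,2}))=0\le 1$, so that \cref{proj_dim_Aij} places us in the regime governed by \cref{free_resolution_Aij}. Consequently the minimal free resolution of $D(\A_{1,2})$ must take one of the four forms listed there.

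The decisive step is then a direct inspection of those four forms. In every one of them the first syzygy module $M_1$ is nonzero---each contains at least the summand $S[-c_1-1]$---so the minimal free resolution has length at least one and $pd_S(D(\A_{1,2}))\ge 1$. Hence $D(\A_{1,2})$ is not a free $S$-module, which contradicts the freeness of $\A_{1,2}$. Therefore the assumption that neither $\A_1$ nor $\A_2$ is free cannot hold, and at least one of them must be free.

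The only point requiring genuine care is minimality: a nonzero $M_1$ obstructs freeness precisely because the resolutions in \cref{free_resolution_Aij} are \emph{minimal}, which is guaranteed by the construction recalled just before that theorem. Beyond this the argument is pure bookkeeping, and it does not even need the full case split---any single nonvanishing entry of $M_1$ already suffices---so the conclusion is insensitive to which of the four structural cases actually occurs. (In the three-dimensional setting this is the content of the parenthetical remark in \cref{free_resolution_in_dim3} that $\A_{1,2}$ can never be free when both $\A_1$ and $\A_2$ are SPOG.)
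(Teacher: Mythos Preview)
Your argument is correct and follows the same contradiction strategy as the paper: assume both $\A_1$ and $\A_2$ are SPOG and derive that $\A_{1,2}$ cannot be free. The only difference is that the paper invokes \cref{min_gen_set} directly rather than the heavier \cref{free_resolution_Aij}. From \cref{min_gen_set} one sees immediately that in every case the minimal generating set of $D(\A_{1,2})$ has at least $\ell+1$ elements, so $|\DS{\A_{1,2}}|\ge \ell+1>\ell$ and $\A_{1,2}$ cannot be free. Your route instead passes through \cref{free_resolution_Aij} and reads off $M_1\neq 0$; this is logically fine but slightly circuitous, since \cref{free_resolution_Aij} itself is built on \cref{min_gen_set}, and the nonvanishing of $M_1$ is just another way of saying that the $\ell+1$ or $\ell+2$ generators exhibited there are not free generators. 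In short, both arguments are the same at heart; the paper simply stops one step earlier.
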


  \begin{proof}
  This is Theorem 1.2 in \cite{abe-yama2023}. Moreover, in this paper, we can readily arrive at the same conclusion through \cref{min_gen_set}.
\end{proof}






\begin{corollary}\label{comfirm_conj}
If $\A_{1,2} $ is SPOG, then 
$c_2=d_j$ for some $j\in \{2,\ldots,l\}$.
\end{corollary}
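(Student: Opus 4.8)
The plan is to feed the hypothesis straight into the resolution classification of \cref{free_resolution_Aij} and read off $c_2$ in the surviving cases. First I would note that if $\A_{1,2}$ is SPOG, then $D(\A_{1,2})$ has a minimal free resolution of length $1$ with precisely $\ell+1$ minimal generators, so $|\DS{\A_{1,2}}|=\ell+1\le\ell+2$ and \cref{free_resolution_Aij} applies. Comparing the rank of the middle term $M_0$, forms \ref{useless-case2-2} and \ref{useless-case2-3} produce $\ell+2$ generators (and two syzygies), so they cannot be SPOG; hence only \cref{free_resolution_Aij} \ref{useless-case2-1} or \ref{useless-case2-2'} can hold.

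In branch \ref{useless-case2-2'} we have $D(\A)+\ker\rho_1^2\subsetneq D(\A_1)$, $D(\A)+\ker\rho_2^1=D(\A_2)$ and $|\DS{\A_{1,2}}|=\ell+1$, which by \cref{min_gen_set} \ref{useless-2} is exactly the branch whose minimal generating set is $\{\theta_1,\dots,\theta_{\ell-1},\varphi_2,\theta_{\ell,1}\}$ with $c_1<c_2=\deg\theta_\ell=d_\ell$. Thus $c_2=d_\ell$ is literally one of the exponents of $\A$. (The same identity also drops out of \cref{coeff b_2}: resolution \ref{useless-case2-2'} exhibits the exponents of the SPOG arrangement $\A_{1,2}$ as $(c_2-1,d_1,\dots,d_{\ell-1})$, and $\sum(\mathrm{exp})-1=|\A_{1,2}|=\sum_{i=1}^\ell d_i-2$ forces $c_2-1=d_\ell-1$.)

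The real content is branch \ref{useless-case2-1}, where $D(\A)+\ker\rho_1^2=D(\A_1)$, so \cref{min_gen_set} \ref{useless-1} already gives $c_1=c_2$, and the resolution is $0\to S[-c_1-1]\xrightarrow{(\alpha_1\alpha_2,f_1,\dots,f_\ell)}S[-c_1+1]\oplus\bigoplus_{i=1}^\ell S[-d_i]\to D(\A_{1,2})\to 0$. At face value the level coefficient $\alpha_1\alpha_2$ has degree $2$, so this is not yet in the SPOG shape of \cref{spogarr}. Here I would argue by uniqueness of the graded Betti numbers: since $\A_{1,2}$ is assumed SPOG, its minimal resolution also takes the form $0\to S[-d-1]\to S[-d]\oplus\bigoplus_{i=1}^\ell S[-e_i]\to D(\A_{1,2})\to 0$. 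Matching the unique syzygy shift gives $d=c_1$, and matching the generator shifts forces the multiset $\{c_1-1,d_1,\dots,d_\ell\}$ to contain the value $c_1$; since $c_1-1\neq c_1$, some $d_i=c_1=c_2$, and then the coefficient $f_i$ has degree $1$, as an honest level coefficient must. I expect this matching step — realizing that SPOG-ness silently forces the numerical coincidence $d_i=c_1$ that converts the displayed degree-$2$ coefficient $\alpha_1\alpha_2$ into a genuine degree-$1$ level coefficient — to be the main obstacle.

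It remains to check $j\in\{2,\dots,\ell\}$, i.e. $c_2\neq d_1=1$. In both surviving branches the resolution contains a summand $S[-c_2+1]$ (in \ref{useless-case2-2'}), respectively $S[-c_1+1]=S[-c_2+1]$ (in \ref{useless-case2-1}, using $c_1=c_2$), so $D(\A_{1,2})$ has a minimal generator of degree $c_2-1$. A nonzero derivation of degree $0$ cannot be such a generator (for an essential arrangement it is forced to vanish), whence $c_2-1\ge 1$, i.e. $c_2\ge 2>1=d_1$, so the matched exponent is not the Euler one. Confirming this lower bound is the only routine point I would dispatch directly.
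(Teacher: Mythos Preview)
Your argument is correct and follows essentially the same route as the paper: both reduce via \cref{free_resolution_Aij} to the two short--resolution branches, dispose of \ref{useless-case2-2'} immediately from \cref{min_gen_set}~\ref{useless-2} (where $c_2=\deg\theta_\ell$), and for \ref{useless-case2-1} extract $c_2=d_j$ from a numerical constraint. The only cosmetic difference is that the paper uses \cref{coeff b_2} to compute $\sum\PO{\A_{1,2}}$ and compares it with $\sum\DS{\A_{1,2}}$, whereas you match the graded Betti multisets directly; these are equivalent bookkeeping for the same uniqueness of the minimal resolution. Your explicit check that $c_2\ge 2$ (so the matched exponent is not the Euler one) is a point the paper leaves implicit.
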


\begin{proof}
By referring to \cref{free_resolution_Aij}, we establish its validity for Case (3) in \cref{free_resolution_Aij}, focusing solely on \cref{free_resolution_Aij}  \ref{useless-case2-1}.

Assuming $\PO{\A_{1,2}} =(1, p_2, \ldots, p_{\ell})$, \cref{coeff b_2} implies $\sum_{i=2}^{\ell} p_i = |\A_{1,2}|$. Examining \cref{free_resolution_Aij}  \ref{useless-case2-1}, we find $\DS{\A_{1,2}}=(1, d_2, \ldots, d_{\ell}, c_2-1)$. It's noteworthy that $1 + \sum_{i=2}^{\ell} d_i = |\A|$ and $\PO{\A_{1,2}} \subset \DS{\A_{1,2}}$. This implies the existence of a $d_j$ such that $\sum_{i=2}^{\ell} d_i + (c_2-1) - d_j = |\A_{1,2}|$. Consequently, we deduce that $c_2 = d_j.$

\end{proof}

\section{Three Dimensional Case}\label{sec:dim3}

In this section, we will persist in employing the notation introduced at the outset of the preceding section. Furthermore, we fix the parameter $\ell$ to be 3.

If $|\A_{H_1\cap H_2}|>2$, there exists a plane $H\in \A_{H_1\cap H_2}$ that is distinct from both $H_1$ and $H_2$. We can assume that $H=\ker x_2$, $H_1=\ker x_1$, and $H_2=\ker (x_1-x_2)$.

\begin{proposition}\label{coff-of-level-elt}
If $|\A_{H_1\cap H_2}|>2$,
    {then for any} basis $\theta_2,\theta_3$ for $D_H(\A)$, there exist a level element $\theta_{3,i}$ for $D(\A_i)$ such that 
\begin{align}
  x_1\theta_{3,1}&=f_2\theta_2+f_3\theta_3,f_2,f_3\in \K [x_2,x_3],\label{Eq_nonsense_1}\\
  (x_1-x_2)\theta_{3,2}&=g_2\theta_2+g_3\theta_3,g_2,g_3\in \K [x_2,x_3].\label{Eq_nonsense_2}
\end{align}
\end{proposition}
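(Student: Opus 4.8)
The plan is to build each level element inside the Euler $H$-component $D_H(\A_i)$ and then to spend the remaining freedom in its choice to clear $x_1$ from the coefficients. Fix coordinates as in the paragraph before the statement, so that $\alpha_H=x_2$, $\alpha_{H_1}=x_1$ and $\alpha_{H_2}=x_1-x_2$; the point of the hypothesis $|\A_{H_1\cap H_2}|>2$ is precisely to guarantee the existence of such an $H\in\A$, lying on $H_1\cap H_2$ but different from $H_1,H_2$. Since $\A$ is free with $\EP{\A}=(1,d_2,d_3)$ and $\deg\theta_E=1$, the splitting \eqref{eq:splitting} makes $D_H(\A)$ free of rank $2$ with the prescribed basis $\theta_2,\theta_3$.

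First I would normalise the level elements. For $i\in\{1,2\}$ we have $H\in\A_i$ because $H\neq H_1,H_2$, so the splitting \eqref{eq:splitting} applies to $D(\A_i)$ and I can subtract the $\theta_E$-part of any level element of $D(\A_i)$; as $\theta_E\in D(\A)$, the result $\theta_{3,i}$ still satisfies $D(\A_i)=D(\A)+S\theta_{3,i}$ and $\theta_{3,i}\notin D(\A)$ (\cref{rmk-levelEl}), so it is again a homogeneous level element of degree $c_i$, now lying in $D_H(\A_i)$. By the second part of \cref{rmk-levelEl} (equivalently, the exact sequence of \cref{exact_seq} for the deletion $\A\setminus\{H_i\}=\A_i$) we have $\alpha_{H_i}\theta_{3,i}\in D(\A)$, and multiplication by the scalar $\alpha_{H_i}$ preserves the vanishing of the $\partial_{x_2}$-component, so $\alpha_{H_i}\theta_{3,i}\in D_H(\A)$. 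Expanding in the basis gives $x_1\theta_{3,1}=f_2\theta_2+f_3\theta_3$ and $(x_1-x_2)\theta_{3,2}=g_2\theta_2+g_3\theta_3$ with homogeneous $f_j,g_j\in S$.

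The key step is then to kill the $x_1$-dependence of these coefficients. Replacing $\theta_{3,i}$ by $\theta_{3,i}-(q_2\theta_2+q_3\theta_3)$ for homogeneous $q_2,q_3$ of the appropriate degrees changes nothing essential, since $q_2\theta_2+q_3\theta_3\in D_H(\A)\subset D(\A)$, so $\theta_{3,i}$ stays a level element inside $D_H(\A_i)$; the effect on the coefficients is $f_j\mapsto f_j-\alpha_{H_i}q_j$. Viewing $S=\K[x_2,x_3][x_1]$ and performing Euclidean division of each coefficient by the degree-one polynomial $\alpha_{H_i}$ in $x_1$, I take $q_j$ to be the quotient; what remains is the remainder, namely $f_j|_{x_1=0}$ when $i=1$ and $g_j|_{x_1=x_2}$ when $i=2$. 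In both cases the remainder is free of $x_1$, i.e. lies in $\K[x_2,x_3]$, yielding exactly \eqref{Eq_nonsense_1} and \eqref{Eq_nonsense_2}.

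I do not anticipate a real obstacle: the argument is a construction rather than an existence-by-contradiction, and it does not even need the finer multiarrangement machinery. The only things requiring care are the degree bookkeeping---checking that the projected and adjusted derivations remain homogeneous of degree exactly $c_i$ and hence genuine level elements---and the observation that the coordinate normalisation forces both $\alpha_{H_1}$ and $\alpha_{H_2}$ to involve $x_1$ (while $\alpha_H=x_2$ does not), which is exactly what makes ``free of $x_1$'' coincide with ``lies in $\K[x_2,x_3]$''.
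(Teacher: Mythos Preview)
Your proposal is correct and follows essentially the same route as the paper: start with a level element in $D_H(\A_i)$, write $\alpha_{H_i}\theta_{3,i}$ in the basis $\theta_2,\theta_3$ of $D_H(\A)$, then perform Euclidean division of the coefficients by $\alpha_{H_i}$ in $\K[x_2,x_3][x_1]$ and absorb the quotients into the level element. You are in fact slightly more careful than the paper in justifying why one may take the initial level element inside $D_H(\A_i)$ (via the $\theta_E$-splitting), a point the paper simply assumes.
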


\begin{proof}
    Suppose that  $\theta_{3,i}'\in D_H(\A_i)$ is a level element  for $D(\A_i)$, where $i=1,2$.  
    Using  \cref{diml}, we may assume that:
\begin{align}
x_1\theta_{3,1}'&=f_2'\theta_2+f_3'\theta_3,\label{Idonotknow1}\\
  (x_1-x_2)\theta_{3,2}'&=g_2'\theta_2+g_3'\theta_3,
\end{align}
Since $f_i'$ and $g_i'$ are {homogeneous} polynomial in $S=\K[x_1,x_2,x_3]$, we may assume that:
\begin{align*}
    f_i'&=x_1f_i''+f_i,\ i=2,3,\ f_i\in \K[x_2,x_3],\\
    g_i'&=(x_1-x_2)g_i''+g_i,\ i=2,3,\ g_i\in \K[x_2,x_3].
\end{align*}
Let $\theta_{3,1}=\theta_{3,1}'-f_2''\theta_2-f_3''\theta_3$.
Considering that $\theta_{3,1}\notin D(\A)$ and $\deg \theta_{3,1}=\deg \theta_{3,1}'$, it is evident that $\theta_{3,1}$ qualifies as a level element for $D(\A_1)$.
By substituting $\theta_{3,1}$ into \cref{Idonotknow1}, we obtain \cref{Eq_nonsense_1}. A parallel application yields \cref{Eq_nonsense_2}.
\end{proof}

\begin{lemma}\label{part_pf_thm_in_dim3}
 Consider the scenario where $|\A_{H_1\cap H_2}|>2$, and let $\theta_2,\theta_3$ form a basis for $D_H(\A)$. Assuming that $\theta_{3,1}$ and $\theta_{3,2}$ are the level elements for $D(\A_1)$ and $D(\A_2)$, respectively, satisfying \cref{coff-of-level-elt}. If 
   \begin{align*}
&  \begin{cases}
 f_2=x_2f_2'+kx_3^r, {\ k\in \K^*, r\in \Z^+,}\ f_2'\in \K [x_2,x_3],\\
 g_2=x_2g_2'+k'x_3^{r'},\  {\ k'\in \K^*, r'\in \Z^+,}\ g_2'\in \K [x_2,x_3],
 \end{cases}
  \begin{cases}
 f_3=x_2f_3',\ f_3'\in \K [x_2,x_3],\\
 g_3=x_2g_3',\ { g_3'}\in \K [x_2,x_3],
\end{cases}
   \end{align*}
then 
\begin{enumerate}[label=(\arabic*)]
    \item \label{useless-lem4p2} $c_1=c_2$ if and only if
    $D(\A_1)= D(\A) + \ker \rho_{1}^2$.
    In this case, $c_1=c_2=\max\{d_1,d_2,d_3\}$ if and only if $\A_{1,2}$ is SPOG.

    \item $c_1<c_2$ if and only if $D(\A_1) \subsetneq D(\A) + \ker \rho_{1}^2$ and $D(\A_2) = D(\A) + \ker \rho_{2}^1$.
\end{enumerate}
\end{lemma}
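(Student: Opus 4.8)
The plan is to translate both claims into divisibility statements about the two level elements and then extract those from the relations \eqref{Eq_nonsense_1} and \eqref{Eq_nonsense_2}. First I record that, by \cref{exact_seq} applied to $(\A_2,H_1)$ and to $(\A_1,H_2)$ (note $\A_1\setminus\{H_2\}=\A_2\setminus\{H_1\}=\A_{1,2}$), we have $\ker\rho_2^1=x_1 D(\A_{1,2})$ and $\ker\rho_1^2=(x_1-x_2)D(\A_{1,2})$, while \cref{level_elt_with_ker} gives $D(\A)+\ker\rho_i^j\subseteq D(\A_i)$ and \cref{rmk-levelEl} gives $D(\A_i)=D(\A)+S\theta_{3,i}$. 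Thus the lemma amounts to deciding when $\theta_{3,1}\in D(\A)+(x_1-x_2)D(\A_{1,2})$ and when $\theta_{3,2}\in D(\A)+x_1 D(\A_{1,2})$. Comparing degrees in \cref{coff-of-level-elt} shows $r=c_1+1-d_2$ and $r'=c_2+1-d_2$ (both $\ge 1$ by \cref{diml}), so $c_1\le c_2$ becomes $r\le r'$ and $c_1=c_2$ becomes $r=r'$. Using the prescribed forms I split off the $x_2$-divisible parts, rewriting \eqref{Eq_nonsense_1} as $x_1\theta_{3,1}-kx_3^{r}\theta_2=x_2\mu$ with $\mu=f_2'\theta_2+f_3'\theta_3\in D(\A)$, and \eqref{Eq_nonsense_2} (after writing $x_1=(x_1-x_2)+x_2$) as $x_1\theta_{3,2}-k'x_3^{r'}\theta_2=x_2\chi$ with $\chi=\theta_{3,2}+g_2'\theta_2+g_3'\theta_3\in D(\A_{1,2})$.

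The computational core is the combination $A:=k'x_3^{r'-r}\theta_{3,1}-k\theta_{3,2}$ and $B:=k'x_3^{r'-r}\mu-k\chi$. Multiplying the first displayed identity by $k'x_3^{r'-r}$, the second by $k$, and subtracting, the two $x_3^{r'}\theta_2$ terms cancel and leave $x_1 A=x_2 B$. Reducing $x_1A=x_2B$ modulo $x_2$ shows $\overline{A}=0$, so $x_2\mid A$; writing $A=x_2A'$ and cancelling $x_2$ gives $B=x_1A'$. The step I expect to be the main obstacle is checking that $A'$ again lies in $D(\A_{1,2})$: since $A\in D_H(\A_{1,2})$, for each $L\in\A_{1,2}\setminus\{H\}$ one has $x_2A'(\alpha_L)=A(\alpha_L)\in S\alpha_L$, whence $A'(\alpha_L)\in S\alpha_L$ because $x_2$ and $\alpha_L$ are coprime, and $A(x_2)=0$ forces $A'(x_2)=0$; so $A'\in D_H(\A_{1,2})$. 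Granting this, expanding $B=x_1A'$ and absorbing the $\theta_2,\theta_3$ terms into $D(\A)$ yields $k\theta_{3,2}\equiv -x_1A'\pmod{D(\A)}$, i.e. $\theta_{3,2}\in D(\A)+x_1D(\A_{1,2})$. Combined with the inclusion from \cref{level_elt_with_ker} this supplies $D(\A_2)=D(\A)+\ker\rho_2^1$ whenever $c_1\le c_2$, which is the part needed in both statements.

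For the dichotomy I exploit the asymmetry of this argument. When $r=r'$ (i.e. $c_1=c_2$) the same $A=k'\theta_{3,1}-k\theta_{3,2}=x_2A'$ can instead be multiplied by $(x_1-x_2)$: using \eqref{Eq_nonsense_2} for $(x_1-x_2)\theta_{3,2}$ and \eqref{Eq_nonsense_1} for $(x_1-x_2)\theta_{3,1}$, the $x_3^{r}\theta_2$ terms again cancel and give $k'\theta_{3,1}\equiv-(x_1-x_2)A'\pmod{D(\A)}$, so $\theta_{3,1}\in D(\A)+\ker\rho_1^2$ and $D(\A_1)=D(\A)+\ker\rho_1^2$; this is part (1), ``$\Rightarrow$''. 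When $r<r'$ the identical multiplication only produces $x_3^{r'-r}\theta_{3,1}\in D(\A)+\ker\rho_1^2$, not $\theta_{3,1}$ itself, so equality is not forced. The converse of part (1) is already available, since \cref{min_gen_set} \ref{useless-1} gives $D(\A)+\ker\rho_1^2=D(\A_1)\Rightarrow c_1=c_2$; as $D(\A)+\ker\rho_1^2\subseteq D(\A_1)$ always, part (1) follows. Assembling it with the always-valid $D(\A_2)=D(\A)+\ker\rho_2^1$ gives part (2): if $c_1<c_2$ then $D(\A)+\ker\rho_1^2\subsetneq D(\A_1)$ (contrapositive of part (1)) and $D(\A_2)=D(\A)+\ker\rho_2^1$, while conversely the strict inclusion forces $c_1\neq c_2$, hence $c_1<c_2$.

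Finally, for the ``moreover'' assume $c_1=c_2$, so by \cref{free_resolution_Aij} \ref{useless-case2-1} the module $D(\A_{1,2})$ has a minimal free resolution with a single syzygy in degree $c_1+1$, generators in degrees $c_1-1,d_1,d_2,d_3$, and map $(\alpha_1\alpha_2,f_1,f_2,f_3)$. This matches the SPOG form of \cref{spogarr} (level $d$, syzygy degree $d+1$) precisely when $c_1$ coincides with one of $d_1,d_2,d_3$; since \cref{diml} forces $c_1\ge\max\{d_1,d_2,d_3\}$, this is equivalent to $c_1=c_2=\max\{d_1,d_2,d_3\}$ (alternatively, the direction SPOG $\Rightarrow c_1=\max$ follows from \cref{comfirm_conj} together with \cref{diml}). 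It remains to check that when $c_1=d_2=\max$ the level coefficient is genuinely nonzero: the relation $(\alpha_1\alpha_2,f_1,f_2,f_3)$ corresponds to the unique SPOG relation of $\A_1$, so $f_2$ agrees up to sign with the coefficient of $\theta_2$ in \eqref{Eq_nonsense_1}, which has degree $r=c_1+1-d_2=1$ and contains the term $kx_3^{r}$ with $k\in\K^*$; thus $f_2\in S_1\setminus\{0\}$, confirming that $\A_{1,2}$ is SPOG with level $d_2$.
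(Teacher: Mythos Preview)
Your proof is correct and follows essentially the same route as the paper. The combination $A=k'x_3^{r'-r}\theta_{3,1}-k\theta_{3,2}$ is, up to a nonzero scalar, exactly the paper's $\theta_{3,2}-h\theta_{3,1}$ with $h=\tfrac{k'}{k}x_3^{r'-r}$, and your conclusion $A=x_2A'$ with $A'\in D(\A_{1,2})$ is the paper's $\theta_{3,2}-h\theta_{3,1}=x_2\varphi$. The only stylistic difference is that the paper extracts both conclusions at once from the single identity
\[
\theta_{3,2}-x_1\varphi \;=\; h\,\theta_{3,1}-(x_1-x_2)\varphi,
\]
observing that the left side lies in $D(\A_2)$ and the right in $D(\A_1)$, hence both lie in $D(\A)$; you instead run two parallel computations (one with $x_1$, one with $(x_1-x_2)$), which is equivalent but slightly longer. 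Your explicit verification that $A'\in D_H(\A_{1,2})$ is a detail the paper suppresses. The treatment of the ``moreover'' part via \cref{free_resolution_Aij}\,\ref{useless-case2-1}, \cref{comfirm_conj} and \cref{diml} matches the paper's.
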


\begin{proof}
Since $\deg \theta_{3,1}=c_1\leq \deg \theta_{3,2}=c_2$, we have
 $r'\geq r$. 
 Assuming that $h=\frac{k'}{k}x_3^{r'-r}$, then we may get that:
\begin{align*}
    x_1(\theta_{3,2}-h\theta_{3,1})&=x_1\theta_{3,2}-hx_1\theta_{3,1}\\
    &=(x_1-x_2)\theta_{3,2}+x_2\theta_{3,2}-hx_1\theta_{3,1}\\
    &=(g_2\theta_2+g_3\theta_3)+x_2\theta_{3,2}-h(f_2\theta_2+f_3\theta_3)\\
    &=(g_2-hf_2)\theta_2+(g_3-hf_3)\theta_3+x_2\theta_{3,2}\\
    &=x_2(g_2'-hf_2')\theta_2+x_2(g_3'-hf_3')\theta_3+x_2\theta_{3,2}.
\end{align*}
Consequently, $x_1(\theta_{3,2}-h\theta_{3,1} )\in x_2D(\A_{2})$, implying $\theta_{3,2}-h\theta_{3,1}\in x_2D(\A_{1,2})$.
We may assume that:
\begin{align*}
    \theta_{3,2}-h\theta_{3,1}=x_2\varphi , \text{ where } \varphi\in D(\A_{1,2}).
\end{align*}
This leads to:
\begin{align}\label{useless-bu1}
    \theta_{3,2}-x_1\varphi = h\theta_{3,1}-(x_1-x_2)\varphi.
\end{align}

Since the left side belongs to $D(\A_2)$, and the right side belongs to $D(\A_1)$, this implies that $\theta_{3,2}-x_1\varphi\in D(\A)$.
Note that $\theta_{3,2}$ is a level element for $D(\A_2)$. They imply that
$x_1\varphi$ is a level element for $D(\A_2)$. 
By \cref{level_elt_with_ker}, we know that $D(\A_2) = D(\A) + \ker \rho_{2}^1$. 

Now, let's organize the proof on a case-by-case basis.

\begin{enumerate}[label=(\arabic*)]
    \item 
    {
If $D(\A_1) = D(\A) + \ker \rho_{1}^2$, by \cref{min_gen_set} \ref{useless-1}, we have $c_1=c_2$. 
Conversely, we get $h\in \K^*$. Let $h=1$. By \cref{useless-bu1},  this implies that $\theta_{3,1}-(x_1-x_2)\varphi\in D(\A)$.
Note that $\theta_{3,1}$ is a level element for $D(\A_1)$. They imply that
$(x_1-x_2)\varphi$ is a level element for $D(\A_1)$. 
By \cref{level_elt_with_ker}, leading to the deduction that $D(\A_1) = D(\A) + \ker \rho_{1}^2$.

If \(c_1 = c_2 = \max\{d_1, d_2, d_3\}\), then by \cref{free_resolution_Aij} \ref{useless-case2-1} and the definition of an SPOG arrangement, we have that \(\A_{1,2}\) is SPOG with  level \(\max\{d_1, d_2, d_3\}\). Furthermore, if \(\A_{1,2}\) is SPOG, by employing \cref{comfirm_conj} and \cref{diml}, it follows that \(c_1 = c_2 = \max\{d_1, d_2, d_3\}\). }

     \item 
Given that $c_1<c_2$, by \cref{min_gen_set} \ref{useless-1}, we establish that $D(\A_1) \subsetneq D(\A) + \ker \rho_{1}^2$. 

If $D(\A_1) \subsetneq D(\A) + \ker \rho_{1}^2$ and $D(\A_2) = D(\A) + \ker \rho_{2}^1$ with $c_1=c_2$, referring to the previous case, we encounter a contradiction.
\end{enumerate}
\end{proof}

Now, we are going to prove \cref{free_resolution_in_dim3}.

\begin{proof}
Let $| \A_{H_1\cap H_2}| >2$.
Recall $S=\K [x_1,x_2,x_3]$ and $\EP{\A}=(1,d_2,d_3)$. 
We may assume that  $S'=\K [x_1,x_3]$.
Since $\A_{1} ^H=\A_2^H=\A^H=\C$, we may let the Ziegler restriction of $\A, \A_1$ and $\A_{2} $ on $H$ be $(\C,m_0),(\C,m_1)$ and $(\C,m_2)$, respectively. Note that $H_1\cap H=H_2\cap H$. Thus  $(\C,m_1)=(\C,m_2)$.

Since $\A$ is free, by  \cref{Ziegler free}, we may get that $(\C,m_0)$ is free with $\EP{\C,m_0}=(d_2,d_3)$.  
Note that we do not differentiate between $d_2$ and $d_3$.
Since
 $|m_0| =|m_1|+1$ and $m_0(L)\geq m_1(L)$ for any $L\in \C$, by \cref{multi -1}, we may assume that $(\C,m_1)$ is free with $\EP{\C,m_1}=(d_2-1,d_3)$.
Moreover, there exist a basis $\varphi_1,\varphi_2$ for $D(\C,m_1)$ such that $x_1\varphi_1,\varphi_2$ forms a basis for $D(\C,m_0)$. Let us look at the following exact sequences:
\begin{center}
   \begin{tikzcd}
0 \arrow[r] & D_H(\A) \arrow[r, "\cdot x_2"] & D_H(\A) \arrow[r, "\pi"] & D(\C,m_0),
\end{tikzcd}

  \begin{tikzcd}
0 \arrow[r] & D_H(\A_i) \arrow[r, "\cdot x_2"] & D_H(\A_i) \arrow[r, "\pi_i"] & D(\C,m_1),
\end{tikzcd}
\end{center}
where $i=1,2$.

 By  \cref{Ziegler free}, there exist a basis $\theta_1,\theta_2$ for $D_H(\A)$ such that
 $\pi (\theta_1)=x_1\varphi_1$ and $\pi (\theta_2)=\varphi_2$.

Using  \cref{coff-of-level-elt}, we may assume that
$\theta_{3,i}$ is a level element  for $D(\A_i)$ such that
\begin{align}
  x_1\theta_{3,1}&=f_1\theta_1+f_2\theta_2,f_1,f_2\in \K [x_2,x_3]. \label{h1t1 in b}\\
  (x_1-x_2)\theta_{3,2}&=g_1\theta_1+g_2\theta_2,g_1,g_2\in \K [x_2,x_3].  \label{h2t2 in b}
\end{align}
Thus 
\begin{align*}
\pi(x_1\theta_{3,1})&=\overline{f_1}x_1\varphi_1+\overline{f_2}
\varphi_2=x_1\pi_1 (\theta_{3,1}) \\
\pi((x_1-x_2)\theta_{3,2})&=\overline{g_1}x_1\varphi+\overline{g_2}\varphi_2=x_1\pi_2 (\theta_{3,2})  
\end{align*}
Note that $\pi_i (\theta_{3,i})\in D(\C,m_1)=S'\varphi_1+ S'\varphi_2$. 
Thus $\overline{f_2},\overline{g_2}\in x_1S'$.
Recall that
$ f_j,g_j\in \K [x_2,x_3]$, where $j=1,2$,  thus $\overline{f_2}=\overline{g_2}=0$, i.e., $f_2,g_2\in x_2\K[x_2,x_3]$.

By the definition and $\theta_{3,1}\in D_H(\A_1)$, we have $\pi_1(\theta_{3,1})=\rho_1^2(\theta_{3,1})$.
 By \cref{level_elt_with_ker}, we have $\rho_1^2(\theta_{3,1})\neq 0$. This implies that $\overline{f_1}\neq 0$. Since $f_1\in \K[x_2,x_3]$, we may get that {$f_1\in \K[x_2,x_3]\setminus x_2\K[x_2,x_3]$. 
 Similarly, we have $g_1\in \K[x_2,x_3]\setminus x_2\K[x_2,x_3]$.} 
Thus we may assume that
\begin{align*}
    &f_1=x_2f_1'+kx_3^{r},f_2=x_2f_2'.\\
    &g_1=x_2g_1'+k'x_3^{r'},g_2=x_2g_2'.
\end{align*}
where $f_i',g_i'\in \K [x_2,x_3]$, $r,r'\in \Z^+$ and $k,k'\in \K ^*$.
{Note that we assumed $| \A_{H_1 \cap H_2}| > 2$. By \cref{part_pf_thm_in_dim3}, $c_1 = c_2$ if and only if $D(\A_1) = D(\A) + \ker \rho_{1}^2$. By \cref{part_pf_thm_in_dim3}, if $c_1 < c_2$, then $D(\A_1) \subsetneq D(\A) + \ker \rho_{1}^2$ and $D(\A_2) = D(\A) + \ker \rho_{2}^1$. In this case, if $|\DS{\A_{1,2}}| = \ell + 1$, by \cref{min_gen_set} \ref{useless-2}, it is evident that $c_1 < c_2 = d_2$ or $d_3$. By \cref{diml}, we have $\max\{d_2, d_3\} \leq c_1$, which is a contradiction.
Thus, if $c_1<c_2$ we have $|\DS{\A_{1,2}}| = \ell + 2$. 

As a conclusion,  $| \A_{H_1 \cap H_2}| > 2$ if and only if either $D(\A_1) = D(\A) + \ker \rho_{1}^2$ and $c_1 = c_2$, or $|\DS{\A_{1,2}}| = \ell + 2$ and $c_1 < c_2$. 

By \cref{min_gen_set}, we have $|\DS{\A_{1,2}}| \leq \ell + 2$ when $\ell = 3$. Combining with \cref{free_resolution_Aij}, we successfully conclude the proof of the theorem. The moreover part in \cref{free_resolution_in_dim3} \ref{useless-thm1p3} is from \cref{part_pf_thm_in_dim3} \ref{useless-lem4p2}.}
\end{proof}

\begin{remark} 
As demonstrated in \cref{free_resolution_in_dim3}  \ref{useless-thm1p3}, it is evident that $\A_{1,2}$ is not necessarily SPOG.
\end{remark}

Now we would like to provide some examples to apply  \cref{free_resolution_in_dim3}.

\afterpage{
\begin{figure}[h!]
 
\begin{center}
    
\begin{tikzpicture}
  \draw [blue](0,-3) -- (0,3) node[above] {$H_2: x_2=0$};
    \draw (3,0) -- (-3,0) node[left]  {$H_1: x_1=0$};
  \draw (5,-2) arc (0:90:7);
   \draw (6,-2) node[below] {$H_3: x_3=0$};
   \draw [blue](3,1) -- (-3,1) node[left] {$H_5: x_1-x_3=0$};
  \draw (1,2) -- (1,-4) node[below] {$H_7:x_2-x_3=0$};
   \draw (2.5,2.5) -- (-2,-2) node[below] {$H_6: x_2-x_1=0$};
    \draw (3.5,-2.5) -- (-2,3) node[left] {$H_4: x_2+x_1-x_3=0$};
\end{tikzpicture}
\end{center}

\caption{Free arrangement $\mathcal{A}$ in $\mathbb{P}^2$}
\label{fig:pic1}
\end{figure}

\begin{figure}[H]
    \centering
\begin{tikzpicture}
  \draw [blue]  (4,0) -- (-4,0)  node[left]  {$H_2:x_1=0$};
  \draw  [orange] (0,-3) -- (0,4)node[above] {\qquad $H_1:x_3=0$};
  \draw  [purple](6,-2) arc (0:90:7);
   \draw  [purple](5,3) node[below] {\qquad $H_3:x_2=0$};
    \draw  [purple](2.5,-2.5) -- (-4,4) node[above] {$H_4:x_3+x_1=0$};
      \draw[blue] (4,-1) -- (-4,-1) node[left] {$H_5:x_1+x_2=0$};
   \draw [orange](4,1) -- (-4,1) node[left] {$H_6:x_1-x_2=0$};
  \draw [blue] (-1,4) -- (-1,-4) node[below] {$H_7:x_3+x_2=0$};
   \draw[blue] (3,3) -- (-3,-3) node[below] {$H_8:x_3-x_1=0$};
    \draw[blue] (-4,3) -- (3,-4) node[below] {\qquad \qquad $H_9:x_3+x_1+x_2=0$};
     \draw  [orange](-2,4) -- (4.5,-2.5) node[below] {\qquad \qquad $H_{10}:x_3+x_1-2x_2=0$};
      \draw[blue] (1.5,-3.5) -- (-4,2) node[left] {$H_{11}:x_3+x_1+2x_2=0$};
\end{tikzpicture}
\caption{Free arrangement $\mathcal{A}$ in $\mathbb{P}^2$}
\label{fig:pic2}
\end{figure}

\clearpage
}

\begin{example}\label{ex_pic1}
Let $$Q(\A)=x_1x_2x_3(x_1-x_2)(x_1-x_3)(x_2-x_3)(x_1+x_2-x_3).$$
Then, $\A$ is free with $\EP{\A}=(1,3,3)$. See  \cref{fig:pic1}.

By \cref{diml}, it follows that  $\A_2$ and $\A_5$ both are plus-one generated with exponents $(1,3,3)$ and level $3$. By \cref{free_resolution_in_dim3}, we have $\A_{2,5} $ is plus-one generated with $\PO{\A_{2,5}}=(1,2,3)$ and level $3$.
\end{example}



\begin{example}\label{not-spog}
Let 
\begin{align*}
  Q(\A)=&x_1x_2x_3(x_1+x_2)(x_1-x_2)(x_1+x_3)(x_1-x_3)(x_2+x_3)\\
  &(x_1+x_2+x_3)(x_1+2x_2+x_3)(x_1-2x_2+x_3).
\end{align*}
Then, $\A$ is free with $\EP{\A}=(1,5,5)$. 
See \cref{fig:pic2}.

By  \cref{diml}, it is easy to see that $\A_1$, $\A_6$ and $\A_{10}$ are free with exponent $(1,4,5)$, $\A_3$ and $\A_4$ are SPOG with $\PO{\A_3}=\PO{\A_4}=(1,5,5)$ and level $6$, and the remaining $\A_j$ are all SPOG with $\PO{\A_j}=(1,5,5)$ and level $5$. By \cref{free_resolution_in_dim3}, this implies the following results:

\begin{enumerate}[label=(\arabic*)]

\item Note that $\A_2$ and $\A_{11}$ are not free, and $| \A_{H_2\cap H_{11}}| =2$, we may get that $\DS{\A_{2,11}}=(1,5,5,5,5)$,  and the minimal free resolution of $D(\mathcal{A}_{2,11})$ has the following forms:
 \begin{align*} 
        0 \rightarrow S[-6]^2 \rightarrow 
       S[-5]^4 \oplus S[-1] \rightarrow D(\A_{2,11}) \rightarrow 0.
    \end{align*}

\item  Note that $\A_3$ and $\A_{4}$ are SPOG with level $6$, and $| \A_{H_3\cap H_{4}}| >2$, we may get that $D(\A_{3,4})$ is not SPOG,  $\DS{\A_{3,4}}=(1,5,5,5)$,
 and the minimal free resolution of $D(\mathcal{A}_{3,4})$ has the following forms:
 \begin{align*} 
        0 \rightarrow S[-7] \rightarrow 
       S[-5]^3 \oplus S[-1] \rightarrow D(\A_{3,4}) \rightarrow 0.
    \end{align*}

\item  Note that $\A_5$ and $\A_{7}$ are SPOG with level $5$, and $| \A_{H_5\cap H_{7}}| >2$, we may get that $\A_{5,7}$ is SPOG with $\PO{\A_{5,7}}=(1,4,5)$ and level $5$. 

\item Note that $\A_3$ is SPOG with level $6$ and $\A_{5}$ is SPOG with level $5$, and $| \A_{H_3\cap H_{5}}| >2$, we may get that $\DS{\A_{3,5}}=(1,5,5,5,5)$, and the minimal free resolution of $D(\mathcal{A}_{3,5})$ has the following forms:
 \begin{align*} 
        0 \rightarrow S[-6]^2 \rightarrow 
       S[-5]^4 \oplus S[-1] \rightarrow D(\A_{3,5}) \rightarrow 0.
    \end{align*}

\end{enumerate}

\end{example}

 \cref{diml} by Abe shows that NT-free-1 arrangements that are non-free are SPOG. 
 It is known that the converse does not hold in general \cite{ah,ys}.
Using our result, we give such an example.
First, recall the following result:

\begin{proposition}[Theorem 6.2 of \cite{spog}]\label{addf}
    Let $\C$ be SPOG
    with a minimal set of
homogeneous generators $\{\gamma_1,\ldots,\gamma_{\ell},\varphi\}$ for $D(\C)$, where $\varphi$ is a level element with a level coefficient $\alpha$.
     Suppose there exists a free addition $\B =\C \cup \{H\}$ of $\C$.
    If $\deg \varphi>\deg \gamma_i$ for any $i\in \{1,2,\ldots,\ell \}$, then
    $H=\ker \alpha$.
\end{proposition}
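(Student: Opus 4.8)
The plan is to exploit the uniqueness of the single syzygy in the SPOG resolution of $D(\C)$ and to pin down the level coefficient by producing a \emph{second} relation among the generators whose $\varphi$-component is forced to equal $\alpha_H$ up to a nonzero scalar. First I would record the degree data. Since $\B=\C\cup\{H\}$ is free and $\C$ is SPOG, \cref{diml} gives $\EP{\B}=\PO{\C}=(\deg\gamma_1,\ldots,\deg\gamma_\ell)$; writing $d_i=\deg\gamma_i$ and $d=\deg\varphi$, the resolution of $\C$ reads $0\to S[-d-1]\xrightarrow{(f_1,\ldots,f_\ell,\alpha)}\bigl(\bigoplus_i S[-d_i]\bigr)\oplus S[-d]\to D(\C)\to 0$. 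From this resolution the first syzygy module is free of rank one, generated in degree $d+1$ by $(f_1,\ldots,f_\ell,\alpha)$; in particular the $\gamma_i$ are $S$-independent, and every syzygy of $(\gamma_1,\ldots,\gamma_\ell,\varphi)$ that is homogeneous of degree $d+1$ is a $\K$-multiple of $(f_1,\ldots,f_\ell,\alpha)$.

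Next I would use the hypothesis $d>d_i$ for all $i$ to prove the inclusion $D(\B)\subseteq N:=\sum_i S\gamma_i$. The key elementary observation is that any homogeneous $\theta\in D(\C)$ with $\deg\theta<d$ has vanishing $\varphi$-coefficient when written through the generators: in a homogeneous expression $\theta=\sum_i a_i\gamma_i+b\varphi$ the summand $b\varphi$ would have degree $\deg\theta<d=\deg\varphi$, forcing $b=0$, so $\theta\in N$. Because $D(\B)$ is free with minimal generators sitting in degrees $d_i$, all of which are $<d$, this yields $D(\B)\subseteq N$. On the other hand $\varphi\in D(\C)\setminus D(\B)$, for otherwise $\varphi\in N$ would contradict the minimality of $\{\gamma_1,\ldots,\gamma_\ell,\varphi\}$; hence $\alpha_H\varphi\in D(\B)$ by the Euler-restriction argument of \cref{rmk-levelEl}\,(2).

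Combining the two facts, $\alpha_H\varphi\in D(\B)\subseteq N$, so $\alpha_H\varphi=\sum_i g_i\gamma_i$ for some $g_i\in S$, which is precisely a relation $(g_1,\ldots,g_\ell,-\alpha_H)$ among $(\gamma_1,\ldots,\gamma_\ell,\varphi)$, homogeneous of degree $d+1$. By the uniqueness recorded in the first step, $(g_1,\ldots,g_\ell,-\alpha_H)=p\,(f_1,\ldots,f_\ell,\alpha)$ with $p\in S$; comparing degrees in the rank-one syzygy module forces $p\in\K$, and $p\neq 0$ since $\alpha_H\neq 0$. Therefore $\alpha_H=-p\,\alpha$ with $p\in\K^{*}$, giving $H=\ker\alpha_H=\ker\alpha$, as required.

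The main obstacle is the inclusion $D(\B)\subseteq N$, which is exactly where the hypothesis $\deg\varphi>\deg\gamma_i$ is indispensable. Without it, a minimal generator of $D(\B)$ in degree $\ge d$ could carry a nonzero $\varphi$-component, the relation producing $\alpha_H$ would no longer be confined to degree $d+1$, and the multiplier $p$ need not be a scalar, so the conclusion $H=\ker\alpha$ could genuinely fail; this is consistent with \cref{rmk-spog}, where a generator whose degree equals $d$ may itself serve as a level element and the level coefficient ceases to be determined.
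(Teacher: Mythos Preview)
The paper does not give its own proof of this proposition; it is quoted verbatim as Theorem~6.2 of \cite{spog} and used only as a black box in the subsequent example. So there is no in-paper argument to compare against.

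That said, your proof is correct and efficient. The crucial step is the inclusion $D(\B)\subseteq N=\sum_i S\gamma_i$, and your justification is the right one: by \cref{diml} the free arrangement $\B$ has $\EP{\B}=\PO{\C}=(d_1,\ldots,d_\ell)$, so a homogeneous basis of $D(\B)$ sits in degrees $d_i<d$; each such basis element, viewed inside $D(\C)$, must have zero $\varphi$-coefficient in any homogeneous expression through the generators, hence lies in $N$. Then $\varphi\notin D(\B)$ (else $\varphi\in N$, contradicting minimality), so $\alpha_H\varphi\in D(\B)\subseteq N$, yielding a degree-$(d{+}1)$ syzygy $(g_1,\ldots,g_\ell,-\alpha_H)$; the rank-one syzygy module of the SPOG resolution forces this to be a nonzero scalar multiple of $(f_1,\ldots,f_\ell,\alpha)$, whence $\ker\alpha_H=\ker\alpha$. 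Your closing remark about why the strict inequality $d>d_i$ is essential is also on point and matches the phenomenon noted in \cref{rmk-spog}.
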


The following is a SPOG arrangement that does not admit a free addition.

\begin{example}
Let 
$$Q(\A)=x_1x_2x_3(x_1 - x_2)(x_1 + x_2)(x_1 + 2x_2)(2x_1 + x_2)(3x_1 + x_2)(x_2 + x_3)(3x_1 + x_2+x_3),$$
and let $H_1=\{x_1=0\}$ and $H_2=\{x_2=0\}$. 
By computer, $\A$ is free with $\EP{\A}=(1,3,6)$.
We may assume that $\{\theta_1,\theta_2,\theta_3\}$ is a basis for $D(\A)$.
By \cref{free_resolution_in_dim3}, $\A_{1,2}$ is SPOG with $\PO{\A_{1,2}}=(d_1=1,d_2=3,c_1-1=5)$ with level $d_3=6$. 
By \cref{min_gen_set}, we have a minimal generator set for $D(\A_{1,2})$ as $\{\theta_1,\theta_2,\theta_3,\varphi\}$, where $\alpha_2\varphi$ is a level element for $D(\A_1)$. 
Note that $\theta_3$ is the level element of $\A_{1,2}$ with the level coefficient $\alpha=x_2+x_3$. 
Since $\ker \alpha \in \A_{1,2}$, by \cref{addf}, it follows that $\A_{1,2}$ does not admit free addition.
\end{example}

\begin{example}\label{ex:DS_geom}
We give two arrangements 
$\B$ and $\C$ such that $L(\B)\cong L(\C)$
but $DS(\B) \neq DS(\C)$.
Their defining polynomials are
\begin{align*}
Q_{\B}&= 
 x_{1}  x_{2}  x_{3}  (x_{2} - 3 x_{3})  (x_{2} + 3 x_{3})  (x_{1} - x_{3})  (x_{1} + x_{3})  (x_{1} + x_{2})  (x_{1} + x_{2} - 3 x_{3})  (x_{1} + x_{2} + 3 x_{3})\\
Q_{\C}&=  x_{1}  x_{2}  x_{3}  (x_{2} - 3 x_{3})  (x_{2} + 3 x_{3})   (x_{1} - x_{3})  (x_{1} + x_{3})  (x_{1} + x_{2})  (x_{1} + x_{2} - 4 x_{3})  (x_{1} + x_{2} + 3 x_{3}).
\end{align*}
Their derivation degree sequences are
\begin{align*}
\DS{\B}&= (1,5,6,6)\\
\DS{\C}&= (1, 6, 6, 6, 6, 6).
\end{align*}
This example shows the intricate nature of the derivation degree sequence that depends not only on the combinatorics but also on the geometry of the arrangement.
\end{example}


\end{document}